\let\csname ver@amsthm.sty\endcsname\relax
\let\theoremstyle\relax
\newcommand{\relphantom}[1]{\mathrel{\phantom{#1}}}
\newcommand{\subgrp}[1]{\langle #1 \rangle}
\newcommand{\set}[1]{\left\{ #1 \right\}}
\newcommand{\abs}[1]{\left| #1 \right|}
\newcommand{\bs}[1]{\boldsymbol{#1}}
\newcommand{\wt}[1]{\widetilde{ #1}}
\newcommand{\ol}[1]{\overline{#1}}
\DeclareMathOperator{\cx}{cx}
\DeclareMathOperator{\Ext}{Ext}
\DeclareMathOperator{\opH}{H}
\newcommand{\Hbul}{\opH^\bullet}
\DeclareMathOperator{\Hom}{Hom}
\DeclareMathOperator{\id}{id}
\DeclareMathOperator{\im}{im}
\DeclareMathOperator{\Lie}{Lie}
\DeclareMathOperator{\Max}{Max}
\DeclareMathOperator{\Nil}{Nil}
\DeclareMathOperator{\sdim}{sdim}
\DeclareMathOperator{\sgn}{sgn}
\DeclareMathOperator{\Spec}{Spec}
\DeclareMathOperator{\str}{str}
\newcommand{\simrightarrow}{\stackrel{\sim}{\rightarrow}}
\renewcommand{\mod}{\, \textup{mod}\, }
\newcommand{\gotimes}{\tensor[^g]{\otimes}{}}
\newcommand{\ev}{\textup{ev}}
\newcommand{\odd}{\textup{odd}}
\newcommand{\ve}{\varepsilon}
\newcommand{\N}{\mathbb{N}}
\newcommand{\Z}{\mathbb{Z}}
\newcommand{\Chi}{\mathcal{X}}
\newcommand{\cp}{\mathcal{P}}
\newcommand{\csalg}{\mathfrak{csalg}}
\newcommand{\fa}{\mathfrak{a}}
\newcommand{\fS}{\mathfrak{S}}
\newcommand{\fsmod}{\mathfrak{smod}}
\newcommand{\fsvec}{\mathfrak{svec}}
\newcommand{\g}{\mathfrak{g}}
\newcommand{\gl}{\mathfrak{gl}}
\newcommand{\grp}{\mathfrak{grp}}
\newcommand{\fs}{\mathfrak{s}}
\newcommand{\zero}{\ol{0}}
\newcommand{\one}{\ol{1}}
\newcommand{\Aone}{A_{\one}}
\newcommand{\Azero}{A_{\zero}}
\newcommand{\gone}{\g_{\one}}
\newcommand{\gzero}{\g_{\zero}}
\newcommand{\Vone}{V_{\one}}
\newcommand{\Vzero}{V_{\zero}}
\newcommand{\bsa}{\bs{A}}
\newcommand{\bsc}{\bs{c}}
\newcommand{\bse}{\bs{e}}
\newcommand{\bsi}{\bs{I}}
\newcommand{\bsl}{\bs{\Lambda}}
\newcommand{\bsv}{\bs{\mathcal{V}}}
\newcommand{\bsir}{\bsi^{(r)}}
\newcommand{\bsirone}{{\bsi_1}^{(r)}}
\newcommand{\bsirzero}{{\bsi_0}^{(r)}}
\newcommand{\bsiizero}{{\bsi_0}^{(i)}}
\newcommand{\bsp}{\bs{\cp}}
\newcommand{\glmm}{\gl(m|m)}
\newcommand{\glmn}{\gl(m|n)}
\newcommand{\glone}{\gl(m|n)_{\one}}
\newcommand{\glzero}{\gl(m|n)_{\zero}}
\numberwithin{equation}{subsection}
\newtheorem{theorem}{Theorem}[subsection]
\newtheorem{proposition}[theorem]{Proposition}
\newtheorem{corollary}[theorem]{Corollary}
\newtheorem{lemma}[theorem]{Lemma}
\theoremstyle{definition}
\newtheorem{definition}[theorem]{Definition}
\newtheorem{example}[theorem]{Example}
\newtheorem{remark}[theorem]{Remark}
\crefname{theorem}{Theorem}{Theorems}
\crefname{proposition}{Proposition}{Propositions}
\crefname{conjecture}{Conjecture}{Conjectures}
\crefname{assumption}{Assumption}{Assumptions}
\crefname{corollary}{Corollary}{Corollaries}
\crefname{lemma}{Lemma}{Lemmas}
\crefname{problem}{Problem}{Problems}
\crefname{definition}{Definition}{Definitions}
\crefname{question}{Question}{Questions}
\crefname{example}{Example}{Examples}
\crefname{remark}{Remark}{Remarks}
\title{On support varieties for Lie superalgebras and finite supergroup schemes}
\author{Christopher M.\ Drupieski}
\address{Department of Mathematical Sciences,
		DePaul University,
		Chicago, IL 60614, USA}
\email{cdrupies@depaul.edu}
\author{Jonathan R. Kujawa}
\address{Department of Mathematics \\
		University of Oklahoma \\
		Norman, OK 73019, USA}
\email{kujawa@math.ou.edu}
\thanks{The first author was supported in part by a Faculty Summer Research Grant from the DePaul University College of Science and Health. The second author was supported in part by NSF grant DMS-1160763.}
\subjclass[2010]{Primary 17B56. Secondary 20G10.}
\begin{document}

\begin{abstract}
We study the spectrum of the cohomology rings of cocommutative Hopf super\-algebras, restricted and non-restricted Lie super\-algebras, and finite supergroup schemes. We also investigate support varieties in these settings and demonstrate that they have the desirable properties of such a theory. We completely characterize support varieties for finite supergroup schemes over algebraically closed fields of characteristic zero, while for non-restricted Lie superalgebras we obtain results in positive characteristic that are strikingly similar to results of Duflo and Serganova in characteristic zero. Our computations for restricted Lie superalgebras and infinitesimal supergroup schemes provide natural generalizations of foundational results of Friedlander and Parshall and of Bendel, Friedlander, and Suslin in the classical setting.
\end{abstract}

\maketitle

\section{Introduction}

\subsection{Overview}

For more than three decades geometric techniques have played a fundamental role in the study of non-semisimple representation theory. Inspired by their use in the study of finite groups \cite{Quillen:1971,Carlson:1983}, Friedlander and Parshall \cite{Friedlander:1986b,Friedlander:1987} introduced support varieties for restricted Lie algebras. Their results were later generalized by Bendel, Friedlander, and Suslin \cite{Suslin:1997,Suslin:1997a} to infinitesimal group schemes and then by Friedlander and Pevtsova \cite{Friedlander:2007} to arbitrary finite group schemes. In a different direction, Ginzburg and Kumar \cite{Ginzburg:1993} calculated the cohomological spectrum for Lusztig's small quantum group. Since then support varieties have been studied for many other interesting classes of finite-dimensional algebras; cf.\ \cite{Aramova:2000, Feldvoss:2011, Benson:2007, Bergh:2009, Pevtsova:2009, Erdmann:2004, Nakano:1998} and the references therein. In each context support varieties have provided important new insights. For example, they play an important role in the study of representation type of self-injective algebras \cite{Feldvoss:2011,Farnsteiner:2007}, in the classification of thick tensor ideals in triangulated categories \cite{Benson:2011, Benson:2015}, in Premet's proof of the Kac--Weisfeiler conjecture on the $p$-divisibility of the dimensions of modules for Lie algebras \cite{Premet:1997}, and in the derived equivalences of Arkhipov, Bezrukavnikov, and Ginzburg \cite{Arkhipov:2004} that relate representations of quantum groups at a root of unity to the geometry of the nilpotent cone.

In contrast to ordinary Lie algebras, the category of finite-dimensional modules of a simple Lie superalgebra over the complex numbers is almost always a non-semisimple category. Supergroups, Lie superalgebras, and related $\Z_2$-graded structures (including $\Z$-graded Hopf algebras as defined by Milnor and Moore \cite{Milnor:1965}) thus provide another natural setting for geometric methods. With Boe and Nakano, the second author initiated a study of support varieties for complex Lie superalgebras and showed that they capture information about the representation theory of these algebras, including atypicality, complexity, and the thick tensor ideals of the category \cite{Boe:2009, Boe:2010, Boe:2012, Boe:2014}. In independent work, Duflo and Serganova \cite{Duflo:2005} also defined associated varieties for Lie superalgebras in characteristic zero and showed they too provide representation theoretic information. 

Much less is known about the representation theory of Lie superalgebras and related $\Z_2$-graded objects over fields of positive characteristic. In a series of papers\cite{Drupieski:2013,Drupieski:2016}, the first author proved that the cohomology ring of a finite supergroup scheme is always a finitely generated algebra, and hence showed that one can define support varieties in this setting. Nevertheless, the study of support varieties for finite supergroup schemes is in its infancy. The results of this paper are a first step toward developing this theory.

\subsection{Main results}

The ambient geometry for support varieties is typically provided by the spectrum of the cohomology ring of the relevant category. Having an explicit description of the spectrum is key for both concrete calculations and for theoretical results. %(e.g.\ \cite{Drupieski:2013, Nakano:2002} and \cite{Arkhipov:2004}, respectively).
For example, if $\g$ is a restricted Lie algebra over an algebraically closed field of characteristic $p$, then the spectrum of the cohomology ring of the restricted enveloping algebra $V(\g)$ is homeomorphic to the restricted nullcone:
\[
\mathcal{N}_1(\g) = \set{x \in \g : x^{[p]}=0 }.
\]
Another similar result is that the spectrum of the cohomology ring for the small quantum group is homeomorphic (under mild assumptions on the root of unity) to the nilpotent cone \cite{Ginzburg:1993}. The goal of the present work is to obtain analogous results in the $\Z_2$-graded setting. Through intricate calculations and through the application of a variety of classical and modern arguments, we obtain a description of the spectrum of the cohomology ring and study support varieties in several natural settings. We briefly describe these results below.

After we develop the necessary preliminaries in \cref{S:prelims}, we consider finite supergroup schemes over an algebraically closed field $k$ of characteristic zero.  A foundational result of Kostant \cite{Kostant:1977} implies that every finite-dimensional cocommutative Hopf superalgebra $A$ over $k$ is the smash product of an exterior algebra and the group algebra of a finite group: $A=\Lambda(V) \# kG$.  We compute the cohomology ring of $A$ in \cref{cohomologychar0} and then show as a consequence that the spectrum of $\Hbul(A,k)$ is the quotient variety $V/G$.  We further prove that the accompanying support variety theory has all the desirable properties of such a theory, including a rank variety description and the tensor product property. As a corollary we obtain a two divisibility result that can be viewed as an analogue of the Kac--Weisfieler conjecture in this setting (cf.\ \cite{Boe:2009} for a similar two divisibility result for complex Lie superalgebras).

In \cref{sec:fdLSAs} we investigate the cohomological spectrum of the enveloping algebra of a finite-dimensional Lie superalgebra $\g = \gzero \oplus \gone$ over an algebraically closed field of odd characteristic. In contrast to ordinary Lie algebras, the cohomology ring of a Lie superalgebra in odd characteristic can provide a nontrivial ambient geometry. Specifically, in \cref{nonrestrictedspectrum} we show that the spectrum of the cohomology ring $\Hbul(\g,k)$ is homeomorphic to 
\[
\Chi_{\g}(k) = \set{ x \in \gone : [x,x]=0 },
\]
where $[\cdot,\cdot]$ denotes the Lie bracket of $\g$. In \cref{associatedvariety} we show that if $M$ is a finite-dimensional $\g$-supermodule, then the support variety for $M$ admits the following rank variety description:
\begin{equation*}
\Chi_{\g}(M) = \set{x \in \Chi_{\g}(k) : M \text{ is not free as a $\subgrp{x}$-supermodule} } \cup \set{0}.
\end{equation*}
It is remarkable that while these support varieties are defined using cohomology in positive characteristic, their rank variety incarnation is identical in definition to the associated varieties defined by Duflo and Serganova \cite{Duflo:2005} for Lie superalgebras in characteristic zero. As far as we know there is no known cohomological definition for Duflo and Serganova's associated varieties, but these results suggest it may be worthwhile to reconsider this question. Conversely, it would be of interest to obtain analogues of the results of Duflo and Serganova in positive characteristic. The calculations in this setting are also remarkable for their similarity to the classical results for ordinary restricted Lie algebras in characteristic $2$ \cite{Jantzen:1986}.

In \cref{S:restrictedLSA} we turn to the cohomological spectrum of a finite supergroup schemes over an algebraically closed field of odd characteristic.  We obtain the strongest results when considering the first Frobenius kernel of the general linear supergroup $GL(m|n)$, or equivalently, when considering its restricted Lie superalgebra $\glmn$. Writing $G$ for the first Frobenius kernel of $GL(m|n)$, we observe from the results of \cite{Drupieski:2016} that there is a finite morphism of varieties, 
\[
\Phi : \Max \left(\Hbul(G,k) \right) \to \mathfrak{gl}(m|n).
\]
In \cref{T:imageofPhi} we prove that the image of $\Phi$ is precisely 
\[
C_{1}(GL(m|n))= \set{ (\alpha, \beta) \in \glzero \times \glone : [\alpha, \beta]=0 \text{ and } \alpha^{[p]}= \tfrac{1}{2}[\beta, \beta] }.
\]
This result holds more generally whenever $G$ is a sub-supergroup scheme of the first Frobenius kernel of $GL(m|n)$. If $n=0$, then the image of $\Phi$ is just the restricted nullcone of the ordinary Lie algebra $\gzero$ as discussed above. The calculations in this section demonstrate an interesting intertwining between the $p$-restricted structure on $\gzero$ and the `$2$-restricted' behavior seen in $\Chi_\g(M)$.

More generally, Bendel, Friedlander, and Suslin \cite{Suslin:1997a, Suslin:1997} proved that if $G_r$ is the $r$-th Frobenius kernel of an affine group scheme $G$, and if $\g = \Lie(G)$ is the Lie algebra of $G$, then (assuming an appropriate embedding of $G$ into some general linear group) the spectrum of the cohomology ring $\Hbul (G_r,k)$ identifies with 
\[
C_r(G)= \set{ (\alpha_0, \ldots , \alpha_{r-1}) \in \mathcal{N}_{1}(\g) : [\alpha_i,\alpha_j] = 0 \text{ for all $i,j$} }.
\]
We show that this characterization is likely to generalize to supergroups. Namely, if we let $G_r$ be the $r$-th Frobenius kernel of $G=GL(m|n)$ and if we set $\g=\glmn$, then one has a finite morphism of varieties,
\[
\Phi_{r} : \Max \left(\Hbul(G_r,k) \right) \to \gzero^{\times r} \times \gone,
\]
and under a certain technical assumption, the image of $\Phi_{r}$ lies in 
\begin{align*}
C_r(G) = \Big\{ (\alpha_0,\alpha_1,\ldots,\alpha_{r-1},\beta) \in (\gzero)^{\times r} \times \gone &: [\alpha_i,\alpha_j] = 0, [\alpha_i,\beta] = 0 \text{ for all $i,j$,} \\
&\alpha_i^{[p]} = 0 \text{ for $0 \leq i \leq r -2$, and } \alpha_{r-1}^{[p]} = \tfrac{1}{2}[\beta,\beta] \Big\}.
\end{align*}

In light of these results and their classical analogues, it is natural to conjecture that $C_r(G)$ describes the cohomology spectrum for the Frobenius kernels of all affine supergroup schemes. More generally, it is an interesting question to generalize the theory of support varieties for infinitesimal and finite group schemes to arbitrary infinitesimal and finite supergroup schemes. For example, in the spirit of Premet's result mentioned above, such a theory would allow one to provide a geometric proof of the super Kac--Weisfeiler Conjecture \cite{Wang:2009}. We expect, however, that the work of producing these generalizations will be quite nontrivial. The examples in \cref{subsec:examples} already demonstrate that even in small examples the existence of odd elements causes new phenomena. 

\subsection{Conventions}

Throughout the paper, $k$ will denote a field of characteristic $p \neq 2$. Beginning in Section \ref{section:char0} we will assume that $k$ is algebraically closed. All vector spaces will be $k$-vector spaces and all unadorned tensor products will denote tensor products over $k$. If $V$ is a $k$-vector space, then $V^*$ will denote its linear dual, i.e., $V^* = \Hom_k(V,k)$. When $p > 0$, let $V^{(1)}$ denote the $k$-vector space obtained by twisting the $k$-module structure on $V$ by the Frobenius map $\lambda \mapsto \lambda^p$.

We will generally follow the notation, terminology, and conventions laid out in \cite[\S2]{Drupieski:2013}. In particular, we will assume that the reader is familiar with the sign conventions of ``super'' linear algebra. We will also assume familiarity with Lie superalgebras---in particular, with the general linear Lie superalgebra $\glmn$---and with the general linear supergroup scheme $GL(m|n)$ (cf.\ \cite{Brundan:2003a}). Set $\Z_2 = \Z/2\Z = \{ \zero,\one \}$, and write $V = \Vzero \oplus \Vone$ for the decomposition of a superspace $V$ into its even and odd subspaces. Recall that the superdimension of a superspace $V$, denoted $\sdim(V)$, is defined by $\sdim(V) = \dim(\Vzero) - \dim(\Vone)$. Given a homogeneous element $v \in V$, write $\ol{v} \in \Z_2$ for the $\Z_2$-degree of $v$. Isomorphisms arising from even linear maps will be denoted by the symbol ``$\cong$'' while isomorphisms arising from odd linear maps will be denoted by the symbol ``$\simeq$''. Write $\N$ for the set $\set{0,1,2,\ldots}$ of non-negative integers.

\subsection{Acknowledgements} The second author is pleased to acknowledge the hospitality and support of the Mittag-Leffler Institute during the special semester in Representation Theory during Spring 2015. 

\section{Preliminaries}\label{S:prelims}

In this section assume that $k$ is a field of characteristic $p \neq 2$.

\subsection{Affine supergroup schemes}

In this section we recall some basic definitions and results regarding affine supergroup schemes. For more details on affine group schemes and affine supergroup schemes, we refer to the reader to \cite{Jantzen:2003} and \cite[\S4]{Drupieski:2013}.

Write $\csalg = \csalg_k$ for the category whose objects are the commutative $k$-superalgebras and whose morphisms are the even superalgebra homomorphisms between them. Then an affine $k$-super\-group scheme is a representable functor from $\csalg$ to the category $\grp$ of groups. In other words, an affine $k$-supergroup scheme $G$ is a functor $G: \csalg \rightarrow \grp$ for which there exists a commutative superalgebra $k[G] \in \csalg$, called the coordinate superalgebra of $G$, such that for each $A \in \csalg$, $G(A) = \Hom_{\csalg}(k[G],A)$. As for ordinary affine group schemes, the group structure maps on $G$ correspond uniquely to comorphisms on $k[G]$; these endow $k[G]$ with the structure of a Hopf superalgebra. Then the category of affine $k$-supergroup schemes is anti-equivalent to the category of commutative $k$-Hopf superalgebras.

\begin{remark}
A $\Z$-graded Hopf algebra in the sense of Milnor and Moore \cite{Milnor:1965} is an example of a Hopf superalgebra. The $\Z_2$-grading on such a Hopf algebra is obtained by simply reducing the $\Z$-grading modulo $2$.
\end{remark}

An affine $k$-supergroup scheme $G$ is \emph{algebraic} if $k[G]$ is a finitely-generated $k$-superalgebra, and is \emph{finite} if $k[G]$ is a finite-dimensional $k$-algebra. If $G$ is a finite $k$-supergroup scheme, then the Hopf superalgebra structure maps on $k[G]$ induce by duality the structure of a cocommutative Hopf superalgebra on $k[G]^*$. We denote $k[G]^*$ by $kG$, and call $kG$ the group algebra of $G$. The category of finite $k$-supergroup schemes is thus equivalent to the category of finite-dimensional cocommutative $k$-Hopf superalgebras. Given a finite $k$-supergroup scheme $G$, the category of rational $G$-supermodules (i.e., the category of $k[G]$-supercomodules) is naturally equivalent to the category of $kG$-supermodules; see \cite[\S4.3]{Drupieski:2013}.

A finite supergroup scheme $G$ is \emph{infinitesimal} if the augmentation ideal $I_\ve$ of $k[G]$ is nilpotent. If $G$ is infinitesimal and if the characteristic $p$ of the field $k$ is not $0$, then the minimal non-negative integer $r$ such that $x^{p^r} = 0$ for all $x \in I_\ve$ is called the \emph{height} of $G$. For example, if $G$ is an arbitrary affine $k$-supergroup scheme, then the $r$-th Frobenius kernel of $G$, which is the sub-supergroup scheme of $G$ defined by the ideal $\set{f^{p^r}: f \in I_\ve}$, is infinitesimal of height $r$. Finite supergroup schemes over algebraically closed fields of characteristic $0$ are classified by \cref{cor:Kostant} below, while over perfect fields of characteristic $p \geq 3$ one has the following theorem \cite[Theorem 5.3.1]{Drupieski:2013}:

\begin{theorem}
Let $k$ be a perfect field of characteristic $p \geq 3$, and let $G$ be an affine algebraic $k$-supergroup scheme. Then there exists an etale (ordinary) group scheme $\pi_0(G)$ and a normal sub-supergroup scheme $G^0$ of $G$ such that $G/G^0 \cong \pi_0(G)$. If $G$ is finite, then $G^0$ is infinitesimal and $G \cong G^0 \rtimes \pi_0(G)$.
\end{theorem}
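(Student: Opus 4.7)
The plan is to adapt the classical construction of the connected-etale sequence for ordinary affine group schemes over a perfect field (cf.\ \cite{Jantzen:2003}) to the super setting. The decisive observation is that in any commutative $k$-superalgebra, every odd element is nilpotent: if $x \in k[G]_{\one}$, then supercommutativity gives $x \cdot x = -x \cdot x$, and since $p \neq 2$ this forces $x^2 = 0$. Consequently the nilradical of $k[G]$ contains $k[G]_{\one}$, and every reduced $k$-subalgebra or quotient $k$-algebra of $k[G]$ is concentrated in even degree.

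To construct $\pi_0(G)$, I would let $A_0 \subseteq k[G]$ be the maximal etale $k$-subalgebra (spanned by the idempotents that separate connected components, exactly as in the classical construction). By the nilpotence observation above, $A_0 \subseteq k[G]_{\zero}$ is purely even. Over a perfect field the formation of the maximal etale subalgebra commutes with tensor products, so $\Delta(A_0) \subseteq A_0 \otimes A_0$; combined with stability under the antipode and counit, this shows that $A_0$ is a Hopf sub-$k$-algebra of $k[G]$. Setting $\pi_0(G) := \Spec(A_0)$ produces an ordinary etale affine group scheme together with a quotient morphism $G \twoheadrightarrow \pi_0(G)$. Defining $G^0$ to be its kernel, with coordinate superalgebra $k[G^0] = k[G] \otimes_{A_0} k$ (where $A_0 \to k$ is the counit), yields a normal sub-supergroup scheme of $G$ satisfying $G/G^0 \cong \pi_0(G)$.

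For the second assertion, assume $G$ is finite. Then $k[G]/\Nil(k[G])$ is a finite reduced commutative algebra over the perfect field $k$, hence a finite product of separable field extensions, and in particular etale. Because the nilradical of a commutative Hopf superalgebra over a perfect field is a Hopf ideal (inherited from the classical statement, again using that odd elements are already nilpotent), this reduced quotient is itself a Hopf algebra. By maximality of $A_0$ together with a Wedderburn--Malcev-style splitting of the algebra surjection $k[G] \twoheadrightarrow k[G]/\Nil(k[G])$, the quotient is identified with $A_0$, yielding a section $\pi_0(G) \hookrightarrow G$. Combined with the inclusion $G^0 \hookrightarrow G$, this exhibits $G$ as the semidirect product $G^0 \rtimes \pi_0(G)$. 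Infinitesimality of $G^0$ follows because, after quotienting $k[G]$ by $A_0^+ \cdot k[G]$ (with $A_0^+ = \ker(\ve|_{A_0})$), the image of the augmentation ideal of $k[G]$ is generated by nilpotent elements.

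The main obstacle is ensuring that the Wedderburn--Malcev algebra splitting can be chosen to be compatible with the comultiplication, so that the resulting section $\pi_0(G) \hookrightarrow G$ is a morphism of group schemes rather than merely of schemes. In the classical case this is handled by a uniqueness argument: the splitting is unique up to conjugation by an element of $1 + \Nil(k[G])$, which allows one to propagate the uniqueness through $\Delta$ and verify Hopf-compatibility. The super analogue should go through essentially unchanged because the splitting image lies entirely inside $k[G]_{\zero}$, so the sign conventions play no role in the relevant calculations.
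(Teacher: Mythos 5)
The paper does not prove this theorem itself; it is quoted verbatim from \cite[Theorem~5.3.1]{Drupieski:2013}, so there is no in-text proof to compare against. Your proposal follows the standard adaptation of the classical connected--etale sequence to the super setting, which is very likely the argument given in the cited reference. The decisive observation you isolate --- that in a commutative $k$-superalgebra with $p \neq 2$ every odd element squares to zero, so that $k[G]_{\one} \subseteq \Nil(k[G])$ --- is exactly what reduces the super statement to the ordinary one, and your constructions of $A_0$, $\pi_0(G)$, and $G^0$ are correct. The appeal to compatibility of $\pi_0$ with tensor products is the step that genuinely uses perfection of $k$, and you have identified it correctly.

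Your final paragraph, however, raises an obstacle that is not actually there. You do not need to prove that a Wedderburn--Malcev complement can be chosen compatibly with $\Delta$, nor any uniqueness-up-to-conjugation statement. Over a perfect field one has $\Nil(B \otimes C) = \Nil(B) \otimes C + B \otimes \Nil(C)$ for finite commutative superalgebras $B,C$, from which it follows directly that $\Nil(k[G])$ is a Hopf superideal, and hence that $k[G] \twoheadrightarrow k[G]/\Nil(k[G])$ is \emph{already} a morphism of Hopf superalgebras. The composite $A_0 \hookrightarrow k[G] \twoheadrightarrow k[G]/\Nil(k[G])$ is therefore a Hopf algebra map by construction; it is injective because $A_0$ is reduced (so $A_0 \cap \Nil(k[G]) = 0$), and surjective by the Wedderburn--Malcev theorem combined with the maximality of $A_0$. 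Thus it is automatically an isomorphism of Hopf algebras, and the corresponding closed immersion $\pi_0(G) \cong G^{\mathrm{red}} = \Spec\left(k[G]/\Nil(k[G])\right) \hookrightarrow G$ is a morphism of supergroup schemes with no further verification --- there is nothing to conjugate, since in the commutative case the complement is unique. Once this is in place, the triviality of $G^{\mathrm{red}} \cap G^0$, the identification $G \cong G^0 \rtimes G^{\mathrm{red}}$, and the infinitesimality of $G^0$ all go through as you describe.
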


Let $G$ be an affine $k$-supergroup scheme. The Lie superalgebra of $G$, denoted $\Lie(G)$, is defined in \cite[\S4.2]{Drupieski:2013}. Set $\g = \Lie(G)$. If $p > 0$, then $\g$ is naturally a restricted Lie superalgebra. We denote the restricted enveloping superalgebra of $\g$ by $V(\g)$. If $G$ is a height-one infinitesimal group scheme, then the group algebra $kG$ of $G$ identifies with $V(\g)$ \cite[Lemma 4.4.2]{Drupieski:2013}. As discussed in \cite[Remark 4.4.3]{Drupieski:2013}, the category of height-one infinitesimal $k$-supergroup schemes is naturally equivalent to the category of finite-dimensional restricted Lie superalgebras over $k$.

\subsection{Graded superalgebras}

A \emph{graded superalgebra} is a $\Z \times \Z_2$-graded algebra. A homomorphism of graded superalgebras is an algebra homomorphism that preserves the $\Z \times \Z_2$-gradings. Given a graded superalgebra $A$ and a homogeneous element $a \in A$, we will write $\deg(a)$ for the $\Z$-degree of $a$ and $\ol{a}$ for the $\Z_2$-degree of $a$.

\begin{definition}
A graded superalgebra $A$ is \emph{graded-commutative} if for all homogeneous elements $a,b \in A$, one has
\[
ab = (-1)^{\ol{a} \cdot \ol{b} + \deg(a) \cdot \deg(b)} ba.\footnote{From now on, whenever we state a formula in which homogeneous degrees have been specified, we mean that the formula is true as written for homogeneous elements and that it extends linearly to non-homogeneous elements.}
\]
\end{definition}

If $A$ is a graded superalgebra concentrated in $\Z$-degree $0$, then $A$ is graded-commutative if and only if $A$ is commutative in the usual sense for superalgebras. (In this paper, the term \emph{commutative}, as applied to superalgebras, will always be used in the sense indicated here, while the usual notion of commutativity for abstract rings will be referred to as \emph{ordinary} commutativity.)

\begin{definition}
Let $A$ and $B$ be graded superalgebras. Then the \emph{graded tensor product} of $A$ and $B$, denoted $A \gotimes B$, is the graded superalgebra whose underlying superspace is the tensor product of superspaces $A \otimes B$, in which the $\Z$-degree of homogeneous simple tensors is defined by $\deg(a \otimes b) = \deg(a) + \deg(b)$, and the product of homogeneous simple tensors is defined by
\[
(a \otimes b)(c \otimes d) = (-1)^{\ol{b} \cdot \ol{c} + \deg(b) \cdot \deg(c)} (ac \otimes bd).
\]
\end{definition}

Given a graded superalgebra $A = \bigoplus_{n \in \Z} A^n$, set
\begin{align*}
A^\ev &= \bigoplus_{n \in \Z} A^{2n}, & \Azero^\ev &= (A^\ev)_{\zero}, & \Aone^\ev &= (A^\ev)_{\one}, \\
A^\odd &= \bigoplus_{n \in \Z} A^{2n+1}, & \Azero^\odd &= (A^\odd)_{\zero}, & \Aone^\odd &= (A^\odd)_{\one}.
\end{align*}
If $A$ is graded-commutative, then $\Azero^\ev \oplus \Aone^\odd$ is a commutative subalgebra of $A$ in the ordinary sense, and (since the characteristic of $k$ is not $2$) the elements of $\Azero^\odd$ and $\Aone^\ev$ square to $0$.

\begin{lemma} \label{I+radical}
Let $A$ be a graded-commutative superalgebra. Set $R = \Azero^\ev \oplus \Aone^\odd$, and let $I \unlhd R$ be a homogeneous ideal containing $\Nil(R)$, the nilradical of $R$. Then $I^+ := I \oplus \Azero^\odd \oplus \Aone^\ev$ is a homogeneous ideal in $A$. If $I$ is a radical ideal in $R$, then $I^+$ is a radical ideal in $A$ in the sense
\[
I^+ = \sqrt{I^+} := \set{x \in A : x^n \in I^+ \text{ for some $n \in \N$}}.
\]
\end{lemma}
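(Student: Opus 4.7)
My plan is to exploit the $k$-superspace decomposition $A = R \oplus C$, where $C := \Azero^\odd \oplus \Aone^\ev$, together with three structural features of $A$ that are immediate from the graded-commutativity formula and inspection of bidegrees: (a) $R$ is an ordinarily commutative subalgebra (inside $R$ one has $\ol{a} \equiv \deg(a) \pmod{2}$, so the sign $(-1)^{\ol{a}\ol{b} + \deg(a)\deg(b)}$ always equals $+1$); (b) $C$ is a homogeneous $R$-sub-bimodule of $A$ with $C \cdot C \subseteq R$; and (c) any homogeneous $c \in C$ satisfies $c^2 = 0$, as already noted in the paragraph preceding the lemma.

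To see that $I^+$ is a homogeneous ideal I would expand $(r+c)(i+c')$ for $r \in R$, $c, c' \in C$, and $i \in I$. Three of the four summands land in $I^+$ automatically: $ri \in I$ by the ideal property of $I$ in $R$, while $rc'$ and $ci$ lie in $C$ by (b). The decisive point is that $cc' \in R$ actually lies in $\Nil(R)$, hence in $I$ by the hypothesis. For homogeneous $c, c'$ this follows from a one-line calculation: one application of graded-commutativity to swap the middle pair yields $cc'cc' = \pm c^2(c')^2$, which is $0$ by (c); checking the sign in the four bidegree combinations from $\{\Azero^\odd,\Aone^\ev\}^{\times 2}$ is routine. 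Two-sidedness of $I^+$ is then automatic since $A$ is graded-commutative and $I^+$ is homogeneous.

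For the radical statement, the decompositions $A = R \oplus C$ and $I^+ = I \oplus C$ exhibit the composite $R \hookrightarrow A \twoheadrightarrow A/I^+$ as a surjective ring map with kernel $R \cap I^+ = I$, so $A/I^+ \cong R/I$ as rings. Since $R$ is commutative and $I$ is radical, $R/I$, and hence $A/I^+$, contains no nonzero nilpotent elements. Any $x \in A$ with $x^n \in I^+$ therefore has vanishing image in $A/I^+$, i.e., $x \in I^+$. The main obstacle is the nilpotency of $C \cdot C$ in the ideal step; once that lemma is in hand, everything else is direct-sum bookkeeping, upgraded to a ring isomorphism with the ordinarily commutative quotient $R/I$.
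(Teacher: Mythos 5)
Your proof is correct and follows essentially the same route as the paper: decompose $A = R \oplus C$ with $C = \Azero^\odd \oplus \Aone^\ev$, show the only nonobvious piece of the ideal closure ($C\cdot C$) lands in $\Nil(R) \subseteq I$, and then read off the radical claim from the ring isomorphism $R/I \cong A/I^+$. The one place you go slightly beyond the paper is in writing out the computation $(cc')^2 = cc'cc' = \pm c^2(c')^2 = 0$, which the paper summarizes as ``one can check''; that is a welcome bit of explicitness, and your sign bookkeeping (and the reduction to homogeneous pieces using ordinary commutativity of $R$) is sound.
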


\begin{proof}
Let $I$ be a homogeneous ideal in $R$. Then $I^+$ is a homogeneous subspace of $A$, so to prove the first claim it suffices to check that $I^+$ is closed under multiplication; we verify this separately for multiplication by elements of $R$ and multiplication by elements of $\Azero^\odd \oplus \Aone^\ev$.

Since $\Azero^\odd \oplus \Aone^\ev$ is closed under multiplication by $R$, the assumption that $I$ is an ideal in $R$ implies that $I^+$ is closed under multiplication by $R$. Next, since $I \subseteq R$, multiplication by $\Azero^\odd \oplus \Aone^\ev$ maps $I$ into the subspace $\Azero^\odd \oplus \Aone^\ev$ of $I^+$. Finally, using the graded-commutativity of $A$ and its consequence that the elements of $\Azero^\odd$ and $\Aone^\ev$ square to zero, one can check that if $x,y \in \Azero^\odd \oplus \Aone^\ev$, then $xy$ is a nilpotent element of $R$. Since $I$ contains the nilradical of $R$, then $xy \in I$. Then $I^+$ is closed under multiplication by $\Azero^\odd \oplus \Aone^\ev$, and hence by all of $A$.

Now suppose that $I$ is a radical ideal in $R$. The inclusion $I^+ \subseteq \sqrt{I^+}$ is tautological, so let $x \in \sqrt{I^+}$ and suppose that $x^n \in I^+$. Then the coset $x+I^+$ is a nilpotent element of the quotient ring $A/I^+$. The inclusion $R \hookrightarrow A$ induces an isomorphism of graded superalgebras $R/I \cong A/I^+$. Since $I$ is a radical ideal in $R$, the quotient ring $R/I$ has no nonzero nilpotent elements. Then neither does $A/I^+$, meaning that $x+I^+$ must be equal to zero in $A/I^+$, and hence that $x \in I^+$.
\end{proof}

From now on, whenever we refer to an ideal in a graded-commutative superalgebra as being a radical ideal, we mean it in the sense of \cref{I+radical}.

\begin{proposition} \label{radicalideal}
Let $A$ be a graded-commutative superalgebra, and let $J \unlhd A$ be a homo\-geneous ideal. Then $\sqrt{J} := \set{ x \in A: x^n \in J \text{ for some $n \in \N$}}$ is a homogeneous radical ideal and $A/\sqrt{J}$ is a reduced commutative ring. Specifically,
\[
\sqrt{J} = \sqrt[R]{J_R} \oplus \Azero^\odd \oplus \Aone^\ev,
\]
where $R = \Azero^\ev \oplus \Aone^\odd$, $J_R = J \cap R$, and $\sqrt[R]{J_R}$ is the radical of $J_R$ in $R$.
\end{proposition}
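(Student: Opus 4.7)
The plan is to prove the set-theoretic identity $\sqrt{J} = K$, where
\[
K := \sqrt[R]{J_R} \oplus \Azero^\odd \oplus \Aone^\ev.
\]
Note that $K$ is precisely the ideal $I^+$ of \cref{I+radical} applied with $I = \sqrt[R]{J_R}$: the radical of a homogeneous ideal in the ordinary commutative graded ring $R$ is itself homogeneous and automatically contains $\Nil(R)$, so \cref{I+radical} applies and gives that $K$ is a homogeneous ideal of $A$ satisfying $K = \sqrt{K}$. Once the identity $\sqrt{J} = K$ is established, all structural claims about $\sqrt{J}$ follow at once, and the reducedness of $A/\sqrt{J}$ will come from the isomorphism $A/K \cong R/\sqrt[R]{J_R}$ induced by the inclusion $R \hookrightarrow A$.

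The inclusion $\sqrt{J} \subseteq K$ is easy. Since $J$ is $\Z \times \Z_2$-homogeneous, it splits as $J = J_R \oplus (J \cap M)$, where $M := \Azero^\odd \oplus \Aone^\ev$; hence $J \subseteq \sqrt[R]{J_R} + M = K$, and consequently $\sqrt{J} \subseteq \sqrt{K} = K$.

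The main obstacle is the reverse inclusion $K \subseteq \sqrt{J}$, which requires producing an explicit power of an arbitrary $x \in K$ that lies in $J$. Write $x = r + s$ with $r \in \sqrt[R]{J_R}$ and $s \in M$, and decompose further $r = r_0 + r_1$ and $s = s_0 + s_1$ with $r_0 \in \Azero^\ev$, $r_1 \in \Aone^\odd$, $s_0 \in \Azero^\odd$, $s_1 \in \Aone^\ev$; both $r_0, r_1 \in \sqrt[R]{J_R}$ by homogeneity of the radical in the graded ring $R$. The element $r_0$ is central in $A$ by graded-commutativity. Setting $y := r_1 + s_0 + s_1$, the sign rule yields $s_0^2 = s_1^2 = 0$, makes $r_1$ anti-commute with each of $s_0, s_1$, and makes $s_0, s_1$ commute, so
\[
y^2 = r_1^2 + 2\, s_0 s_1 \in R.
\]
Here $r_1^2 \in \sqrt[R]{J_R}$ (as $r_1$ does), and $(s_0 s_1)^2 = s_0^2 s_1^2 = 0$ places $s_0 s_1 \in \Nil(R) \subseteq \sqrt[R]{J_R}$; hence $y^2 \in \sqrt[R]{J_R}$, so some $y^{2m} = (y^2)^m$ lies in $J_R$, and then $y^\ell \in J$ for every $\ell \geq 2m$. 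Choosing $n_0$ with $r_0^{n_0} \in J_R$ and applying the ordinary binomial theorem to $(r_0 + y)^{n_0 + 2m}$, valid because $r_0$ is central, forces every summand to lie in $J$ (either $r_0^k \in J$ or $y^{N-k} \in J$), so $x^{n_0 + 2m} \in J$.

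Combining the two inclusions yields $\sqrt{J} = K$, which simultaneously gives the homogeneity of $\sqrt{J}$, the radical property $\sqrt{J} = \sqrt{\sqrt{J}}$, and the stated decomposition. For the final assertion, the composite $R \hookrightarrow A \twoheadrightarrow A/K$ is a surjective ring homomorphism with kernel $R \cap K = \sqrt[R]{J_R}$, inducing a ring isomorphism $A/\sqrt{J} \cong R/\sqrt[R]{J_R}$; the latter is a reduced ordinary commutative ring because $\sqrt[R]{J_R}$ is a radical ideal in the commutative ring $R$, completing the proof.
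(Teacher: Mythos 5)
Your proof is correct, and it follows the same overall architecture as the paper's: invoke \cref{I+radical} applied to $\sqrt[R]{J_R}$ to get the homogeneous radical ideal $K = J^+$, observe $J \subseteq K$ whence $\sqrt{J} \subseteq K$, prove $K \subseteq \sqrt{J}$ by exhibiting an explicit power landing in $J$, and finally deduce reducedness from $A/K \cong R/\sqrt[R]{J_R}$. Where you diverge is in the key computation. The paper writes $x = x_0^0 + x_1^1 + x_0^1 + x_1^0$, expands $x^{2n+2}$ into reordered monomials $(x_0^0)^a(x_1^1)^b(x_0^1)^c(x_1^0)^d$ using graded-commutativity, caps $c,d \leq 1$ because the corresponding pieces square to zero, and finishes with a pigeonhole argument to find a factor in $J_R$. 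You instead split $x = r_0 + y$ with $r_0 \in \Azero^\ev$ central, compute $y^2 = r_1^2 + 2s_0 s_1 \in \sqrt[R]{J_R}$ directly, and apply the ordinary binomial theorem to $(r_0 + y)^{n_0 + 2m}$; each term then visibly lies in $J$. Your route avoids any discussion of signs in the expanded product (they are hidden inside the single computation of $y^2$) and replaces the pigeonhole count with the cleaner observation that $r_0$ commutes with everything. Both arguments produce comparable exponent bounds; yours is slightly tidier in its bookkeeping, while the paper's makes more direct use of the four-fold decomposition already established in \cref{cor:nilradical}.
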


\begin{proof}
Let $J$ be a homogeneous ideal in $A$. Set $R = \Azero^\ev \oplus \Aone^\odd$, and let $J_R = J \cap R$. Then $J_R$ is a homogeneous ideal in $R$, and $J = J_R \oplus (J \cap \Azero^\odd) \oplus (J \cap \Aone^\ev)$. Let $\sqrt[R]{J_R}$ be the radical of $J_R$ in $R$. By the theory of ordinary commutative graded rings, $\sqrt[R]{J_R}$ is a homogeneous ideal in $R$. Now set $J^+ = \sqrt[R]{J_R} \oplus \Azero^\odd \oplus \Aone^\ev$. Then by \cref{I+radical}, $J^+$ is a homogeneous radical ideal in $A$. Clearly $J \subseteq J^+$, so $\sqrt{J} \subseteq J^+$ because $J^+$ is a radical ideal. Now to prove that $J^+ \subseteq \sqrt{J}$, and hence show that $\sqrt{J}$ is a homogeneous ideal, let $x \in J^+$. Then $x = x_0^0 + x_1^1 + x_0^1 + x_1^0$ for some $x_0^0 \in \Azero^\ev$, $x_1^1 \in \Aone^\odd$, $x_0^1 \in \Azero^\odd$, and $x_1^0 \in \Aone^\ev$. Since $J^+ = \sqrt[R]{J_R} \oplus \Azero^\odd \oplus \Aone^\ev$, and since $\sqrt[R]{J_R}$ is a homogeneous radical ideal in $R$, this implies that $x_0^0,x_1^1 \in \sqrt[R]{J_R}$. Choose $n \in \N$ such that $(x_0^0)^n,(x_1^1)^n \in J_R$. Now expanding the product $(x_0^0+x_1^1+x_0^1+x_1^0)^{2n+2}$, and using the graded-commutativity of $A$, it follows that $x^{2n+2}$ can be written as a linear combination of monomials of the form $(x_0^0)^a (x_1^1)^b (x_0^1)^c (x_1^0)^d$ with $a+b+c+d = 2n+2$. Since $(x_0^1)^2 = (x_1^0)^2 = 0$ by the graded-commutativity of $A$, we may assume that $c$ and $d$ are each at most $1$. Then by the pigeonhole principle, we must have either $a \geq n$ or $b \geq n$. Then each monomial in the expansion of $x^{2n+2}$ has at least one factor in $J_R$. Since $J_R \subseteq J$, and since $J \unlhd A$, this implies that $x^{2n+2} \in J$. Finally, the inclusion $R \hookrightarrow A$ induces an isomorphism of graded superalgebras $R/\sqrt[R]{J_R} \cong A/\sqrt{J}$. Since $R/\sqrt[R]{J_R}$ is a reduced commutative ring in the ordinary sense, so is $A/\sqrt{J}$.
\end{proof}

\begin{corollary} \label{cor:nilradical}
Let $A$ be a graded-commutative graded superalgebra. Set $R = \Azero^\ev \oplus \Aone^\odd$. Then
\[
\Nil(A) := \set{ x \in A: x^n= 0 \text{ for some $n \in \N$}} = \Nil(R) \oplus \Azero^\odd \oplus \Aone^\ev
\]
is a homogeneous ideal in $A$, and $A/\Nil(A)$ is a reduced commutative ring in the ordinary sense.
\end{corollary}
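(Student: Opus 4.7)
The plan is to deduce the corollary as the special case of \cref{radicalideal} applied to the zero ideal $J = 0 \unlhd A$. Since $0$ is trivially a homogeneous ideal, that proposition immediately tells us that $\sqrt{0}$ is a homogeneous radical ideal of $A$ and that $A/\sqrt{0}$ is a reduced commutative ring in the ordinary sense. By definition $\Nil(A) = \sqrt{0}$, so both of those qualitative conclusions transfer directly to $\Nil(A)$.

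For the decomposition formula, I would set $J_R = 0 \cap R = 0$, whose radical in $R$ is $\sqrt[R]{0} = \Nil(R)$ by the ordinary definition of the nilradical of the (graded) commutative ring $R$. Substituting this into the explicit decomposition supplied by \cref{radicalideal} yields
\[
\Nil(A) \;=\; \sqrt{0} \;=\; \Nil(R) \oplus \Azero^\odd \oplus \Aone^\ev,
\]
which is the desired identity.

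Since the entire statement is essentially a restatement of \cref{radicalideal} in the case $J = 0$, I would not expect any genuine obstacle. The only conceptual point worth flagging is that the ``super'' enlargement of $\Nil(R)$ by the full subspace $\Azero^\odd \oplus \Aone^\ev$ is forced by graded-commutativity: every element of that subspace already squares to zero (as noted in the paragraph preceding \cref{I+radical}), so it must lie in any set of nilpotent elements, and the work of \cref{radicalideal} is precisely to verify that adjoining this subspace still produces an ideal.
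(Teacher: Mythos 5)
Your proof is correct and is exactly the paper's approach: the authors also obtain \cref{cor:nilradical} by specializing \cref{radicalideal} to $J = \{0\}$, with $J_R = 0$ and $\sqrt[R]{0} = \Nil(R)$. The extra remark on why $\Azero^\odd \oplus \Aone^\ev$ is forced into the nilradical is accurate and a nice sanity check, though not needed once \cref{radicalideal} is invoked.
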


\begin{proof}
Take $J = \set{0}$ in \cref{radicalideal}.
\end{proof}

\begin{definition}
Let $A$ be a graded-commutative super\-algebra. Define $\Spec(A)$ to be the prime ideal spectrum commutative ring $A/\Nil(A)$, considered in the usual manner as a topological space via the Zariski topology. Define $\Max(A)$ be the maximal ideal spectrum of $A/\Nil(A)$, considered as a topological space via the topology inherited from $\Spec(A)$.
\end{definition}

If $A$ is a finitely-generated algebra and if the field $k$ is algebraically closed, then $\Max(A)$ is an affine algebraic variety in the usual sense.

\subsection{Representations, cohomology, and support varieties} \label{subsec:repcohomologysupport}

Let $A$ be a $k$-superalgebra and let $M$ and $N$ be (left) $A$-supermodules. A linear map $g: M \rightarrow N$ is an $A$-supermodule homomorphism if for all $a \in A$ and $m \in M$ one has $g(a.m) = (-1)^{\ol{a} \cdot \ol{g}} a.g(m)$. We denote the vector superspace of all $A$-supermodule homomorphisms from $M$ to $N$ by $\Hom_A(M,N)$. The category $\fsmod_A$, whose objects are the $A$-supermodules and whose morphisms are the $A$-supermodule homomorphisms, is not an abelian category, though the underlying even subcategory $(\fsmod_A)_\ev$, consisting of the same objects but only the even homomorphisms, is an abelian category. When $A$ is clear from the context we may denote $\fsmod_A$ simply by $\fsmod$. Then
\[
\Hom_{\fsmod_\ev}(M,N) = \Hom_A(M,N)_{\zero}.
\]
We write $\fsvec$ for the category of $k$-supermodules, i.e., the category of $k$-superspaces.

Given superspaces $V$ and $W$ and a linear map $\phi: V \rightarrow W$, define $\Pi(V)$ to be the superspace $V$ equipped instead with the opposite $\Z_2$-grading and define $\Pi(\phi): \Pi(V) \rightarrow \Pi(W)$ to be equal to $\phi$ as a linear map between the underlying vector spaces. We call $\Pi: \fsvec \rightarrow \fsvec$ the parity change functor. If $M$ is a left $A$-supermodule, then $\Pi(M)$ is made into a left $A$-supermodule by having each $a \in A$ act on $\Pi(M)$ the way that $(-1)^{\ol{a}} a$ acts on $M$. In other words, if we write $m^\pi$ to denote an element $m \in M$ considered as an element of $\Pi(M)$, then $a.m^\pi = (-1)^{\ol{a}}(a.m)^\pi$. Now the identity map on $M$ defines an odd $A$-supermodule isomorphism $M \simeq \Pi(M)$, $m \mapsto m^\pi$. In particular, if $M$ and $N$ are $A$-supermodules, there are odd isomorphisms
\begin{equation} \label{eq:homparityflip}
\Hom_A(M,N) \simeq \Hom_A(M,\Pi(N)) \quad \text{and} \quad \Hom_A(M,N) \simeq \Hom_A(\Pi(M),N)
\end{equation}
that are natural with respect to even homomorphisms in either variable.

Given a $k$-superalgebra $A$, one can form the smash product algebra $A \# k\Z_2$. As a superspace, $A \# k\Z_2$ is simply $A \otimes k\Z_2$, the tensor product of $A$ with the group ring $k\Z_2$ for $\Z_2$ (considered as a purely even superalgebra). Multiplication in $A \# k\Z_2$ is induced by the given products in $A$ and $k\Z_2$ and by the relation $(1 \otimes \one)(a \otimes \zero) = (-1)^{\ol{a}} a \otimes \one$. In the previous equation, $\zero$ and $\one$ denote the identity and non-identity elements of the group $\Z_2$; these elements form a vector space basis for the group ring $k\Z_2$. The category of $A \# k\Z_2$-modules naturally identifies with $(\fsmod_A)_\ev$. Specifically, if $M$ is an $A \# k\Z_2$-module, then $M$ is an $A$-module and a $k\Z_2$-module by restriction. Since $k$ is assumed to be a field of characteristic $p \neq 2$, $M$ decomposes under the action of $k\Z_2$ into a trivial $k\Z_2$-submodule $M_{\zero}$ and a nontrivial $k\Z_2$-submodule $M_{\one}$. Then $M$ is an $A$-supermodule with respect to the decomposition $M = M_{\zero} \oplus M_{\one}$. Conversely, if $M$ is an $A$-supermodule, then $M$ lifts to the structure of an $A \# k\Z_2$-module by having $\one \in \Z_2$ act on $M$ by the map $m \mapsto (-1)^{\ol{m}}m$.

We say that an $A$-supermodule $P$ is projective if the functor $\Hom_A(P,-): \fsmod_\ev \rightarrow \fsvec_\ev$ is exact, and we say that an $A$-supermodule $Q$ is injective if $\Hom_A(-,Q): \fsmod_\ev \rightarrow \fsvec_\ev$ is exact. It follows from \eqref{eq:homparityflip} that an $A$-supermodule is projective (resp.\ injective) in this sense if and only if it is projective (resp.\ injective) in the usual sense in the abelian category $(\fsmod_A)_\ev$. Since $(\fsmod_A)_\ev$ identifies with the category of $A \# k\Z_2$-modules, it contains enough projectives and enough injectives. We can thus make the following definition:

\begin{definition}
Given an $A$-supermodule $M$, define $\Ext_A^n(M,-)$ as in \cite[\S\S2.5--2.6]{Drupieski:2013} to be the $n$-th right derived functor of $\Hom_A(M,-): (\fsmod_A)_\ev \rightarrow \fsvec_\ev$.
\end{definition}

With the above definition, the extension group $\Ext_A^n(M,N)$ can have a nonzero odd subspace, whereas ordinary extension groups in the abelian category $\fsmod_\ev$, which are defined in terms of the right derived functors of $\Hom_{\fsmod_\ev}(M,-) = \Hom_A(M,-)_{\zero}$, are always purely even superspaces. In principle, the odd isomorphisms \eqref{eq:homparityflip} permit us to reduce all cohomology calculations to calculations strictly within the abelian category $\fsmod_\ev$, but we prefer to work with $\Ext_A^\bullet(M,N)$ so that we don't have to manually keep track of the $\Z_2$-gradings. In terms of the smash product algebra $A \# k\Z_2$, $\Ext_{A \# k\Z_2}^\bullet(M,N) = \Ext_A^\bullet(M,N)_{\zero}$.

If $A$ is a Hopf superalgebra, then $A \# k\Z_2$ is naturally a Hopf algebra in the ordinary sense. Specifically, if the coproduct in $A$ of an element $a \in A$ is denoted $\Delta(a) = \sum a_1 \otimes a_2$, then the coproduct on $A \# k\Z_2$ is determined by the equations
\begin{align*}
\Delta_{A \# k\Z_2}(a \otimes \zero) &= \sum (a_1 \otimes \ol{a_2}) \otimes (a_2 \otimes \zero), \quad \text{and} \\
\Delta_{A \# k\Z_2}(1 \otimes \one) &= (1 \otimes \one) \otimes (1 \otimes \one).
\end{align*}
Since $(\fsmod_A)_\ev$ identifies with the category of $A \# k\Z_2$-modules, the next lemma follows from the well-known fact that any finite-dimensional ordinary Hopf algebra is self-injective.

\begin{lemma} \label{L:selfinjective}
If $A$ is a finite-dimensional Hopf superalgebra, then an $A$-supermodule is projective if and only if it is injective.
\end{lemma}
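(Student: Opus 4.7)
The plan is to reduce the statement to a classical theorem about ordinary finite-dimensional Hopf algebras applied to the smash product $A \# k\Z_2$. By the construction recalled just before the lemma, if $A$ is a Hopf superalgebra then $A \# k\Z_2$ is a Hopf algebra in the ordinary sense, with the coproduct spelled out in the paragraph preceding the statement. Moreover, if $A$ is finite-dimensional as a $k$-superalgebra, then $A \# k\Z_2 \cong A \otimes k\Z_2$ is finite-dimensional as a $k$-algebra.

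Next I would transfer the notions of projectivity and injectivity across the identification of categories $(\fsmod_A)_\ev \cong \fsmod_{A \# k\Z_2}$ described in \cref{subsec:repcohomologysupport}. By the odd isomorphisms in \eqref{eq:homparityflip}, an $A$-supermodule $P$ is projective (in the sense defined in the paper, via exactness of $\Hom_A(P,-) \colon \fsmod_\ev \to \fsvec_\ev$) if and only if it is projective as an object of $(\fsmod_A)_\ev$, and similarly for injectivity. Under the equivalence with $A \# k\Z_2$-modules, these translate precisely to projectivity and injectivity in the usual sense over the ordinary algebra $A \# k\Z_2$.

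Finally I would invoke the classical theorem (Larson--Sweedler, and more generally a standard result in the theory of Hopf algebras) that every finite-dimensional Hopf algebra over a field is a Frobenius algebra, hence self-injective, so that a module over such an algebra is projective if and only if it is injective. Applied to $A \# k\Z_2$, this yields the lemma. There is no real obstacle here; the substance of the argument is entirely contained in checking that the smash product $A \# k\Z_2$ inherits the Hopf algebra structure and finite-dimensionality from $A$, both of which are immediate from the formulas given in the text.
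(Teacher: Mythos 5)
Your proof is correct and is essentially identical to the paper's: both pass to the smash product $A \# k\Z_2$, identify $(\fsmod_A)_\ev$ with $A \# k\Z_2$-modules, and invoke the classical fact that a finite-dimensional Hopf algebra is self-injective. No gap.
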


Recall that elements of $\Ext_{\fsmod_\ev}^n(M,N)$ can be interpreted as equivalence classes of length-$n$ exact sequences, i.e., exact sequence in $\fsmod_\ev$ of the form
\begin{equation} \label{eq:nexactsequence}
E: \quad 0 \rightarrow N \rightarrow E_n \rightarrow E_{n-1} \rightarrow \cdots \rightarrow E_1 \rightarrow M \rightarrow 0.
\end{equation}
Similarly, homogeneous elements of $\Ext_A^n(M,N)$ can be interpreted as equivalence classes of exact sequences of the form \eqref{eq:nexactsequence} in which each arrow is a homogeneous $A$-supermodule homomorphism. The parity of such an equivalence class is then the sum of the parities of the arrows appearing in any representative for it. For details the reader can consult \cite[\S3.5]{Drupieski:2016}, replacing the category $\bsp$ with $\fsmod_A$. Homogeneous exact sequences can be spliced together in the usual way, and this gives rise to an even linear map
\[
\Ext_A^m(M,N) \otimes \Ext_A^n(P,M) \rightarrow \Ext_A^{m+n}(P,N), \quad \alpha \otimes \beta \mapsto \alpha \circ \beta,
\]
that we call \emph{Yoneda composition} or the \emph{Yoneda product}. In particular, $\Hbul(A,k) := \Ext_A^\bullet(k,k)$ is a graded superalgebra. For more discussion of the Yoneda product, including an interpretation via homotopy classes of homogeneous chain maps, see \cite[\S3.2]{Drupieski:2016} (again replacing $\bsp$ with $\fsmod_A$).

Now suppose $A$ is a Hopf superalgebra with coproduct $\Delta: A \rightarrow A \otimes A$ and antipode $S: A \rightarrow A$. Given $a \in A$, write $\Delta(a) = \sum a_1 \otimes a_2$. Let $M$ and $N$ be left $A$-supermodules. Then the action of $A$ on $M \otimes N$ is defined for $m \in M$ and $n \in N$ by $a.(m \otimes n) = \sum (-1)^{\ol{a_2} \cdot \ol{m}} (a_1.m) \otimes (a_2.n)$, and the action of $A$ on $\Hom_k(M,N)$ is defined for $g \in \Hom_k(M,N)$ by $(a.g)(m) = \sum (-1)^{\ol{g} \cdot \ol{a_2}} a_1.g(S(a_2).m)$. If $M$ is finite-dimensional, then the superspace isomorphism $N \otimes M^* \cong \Hom_k(M,N)$ defined for $n \in N$ and $g \in M^*$ by $(n \otimes g) \mapsto (m \mapsto g(m) \cdot n)$ is an $A$-supermodule isomorphism. If $A$ is cocommutative (in the super sense), then the supertwist map $T: M \otimes N \rightarrow N \otimes M$ defined by $m \otimes n \mapsto (-1)^{\ol{m} \cdot \ol{n}} n \otimes n$ is an $A$-supermodule homomorphism.

\begin{lemma} \label{tensorproductdual}
Let $A$ be a Hopf superalgebra, and let $M$ and $N$ be finite-dimensional $A$-super\-modules. Then $(M \otimes N)^* \cong N^* \otimes M^*$ as $A$-supermodules.
\end{lemma}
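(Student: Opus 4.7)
My plan is to construct an explicit even $A$-supermodule isomorphism and verify equivariance by a direct computation using the Hopf superalgebra axioms. Define
\[
\Psi : N^* \otimes M^* \longrightarrow (M \otimes N)^*, \qquad \Psi(g \otimes f)(m \otimes n) = (-1)^{\ol{f}\cdot\ol{n}}\,f(m)\,g(n),
\]
extended linearly. The sign is the one dictated by the Koszul rule for moving $f$ past $n$, and it makes $\Psi$ an even map.

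First I would verify that $\Psi$ is a superspace isomorphism. This is purely linear algebra in $\fsvec$: for homogeneous bases $\{m_i\}$ of $M$ and $\{n_j\}$ of $N$ with dual bases $\{f_i\}$ and $\{g_j\}$, the elements $\Psi(g_j \otimes f_i)$ form, up to signs, the basis of $(M\otimes N)^*$ dual to $\{m_i \otimes n_j\}$; finite-dimensionality of $M$ and $N$ is what is needed here.

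The main work is to verify that $\Psi$ is $A$-linear. Given $a \in A$ with $\Delta(a) = \sum a_1 \otimes a_2$, I would compute both sides of $\Psi(a.(g\otimes f)) = a.\Psi(g\otimes f)$ applied to an arbitrary homogeneous $m \otimes n$. Using the coproduct-based action on $N^* \otimes M^*$ together with the antipode formula $(a.g)(n) = (-1)^{\ol{g}\cdot\ol{a}}\,g(S(a).n)$ on each dual factor, the left-hand side unfolds into a sum of signed terms of the form $f(S(a_{1,2}).m)\,g(S(a_{1,1}).n)$ (with signs collected from moving homogeneous elements past each other, and from reindexing the iterated coproduct $(\Delta \otimes 1)\Delta(a_1)$). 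The right-hand side is $(-1)^{\ol{a}\cdot(\ol{f}+\ol{g})}\Psi(g\otimes f)\bigl(S(a).(m\otimes n)\bigr)$, which upon expanding the coproduct-based action on $M\otimes N$ and the antipode axiom produces a sum involving $f(S(a)_2.m)\,g(S(a)_1.n)$ with its own Koszul signs. Equality of the two sides then reduces to the anti-comultiplicativity of the antipode, namely
\[
\Delta(S(a)) = \sum (-1)^{\ol{a_1}\cdot\ol{a_2}}\,S(a_2) \otimes S(a_1),
\]
combined with coassociativity to match the triple-indexed summation against $\Delta(S(a))$.

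I expect the only real obstacle to be the sign bookkeeping: one has to track parities introduced by (i) the Koszul sign in the definition of $\Psi$, (ii) the sign in the action on each dual, (iii) the sign in the coproduct-based action on tensor products, and (iv) the sign in the anti-comultiplicativity of $S$. These four sources of signs conspire to cancel, which is the content of the statement that the category of $A$-supermodules is rigid monoidal. Note that cocommutativity of $A$ is not required for this lemma; it enters only in the earlier statement about the supertwist being $A$-linear.
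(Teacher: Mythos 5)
Your overall strategy (define an explicit map, check it is a superspace isomorphism via dual bases, then verify $A$-equivariance using the coproduct/antipode axioms and $\Delta(S(a)) = \sum (-1)^{\ol{a_1}\ol{a_2}} S(a_2)\otimes S(a_1)$) is exactly the right one and matches the paper's intent. Your remark that cocommutativity of $A$ is not needed is also correct. However, the formula you wrote down for $\Psi$ has a spurious sign, and with that sign the map is \emph{not} $A$-equivariant.

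Concretely, the correct isomorphism is the one with \emph{no} Koszul sign: for $g\in N^*$, $f\in M^*$ one should set $\Psi(g\otimes f)(m\otimes n) = f(m)\,g(n)$. The reason no sign is needed is that, in the sequence $g\otimes f\otimes m\otimes n$, the factor $f$ is already adjacent to $m$ and (after that contraction collapses to an even scalar) $g$ is adjacent to $n$; the canonical rigid-monoidal pairing $N^*\otimes M^*\otimes M\otimes N\to k$ contracts ``inside-out'' and involves no transposition of odd factors. Your heuristic ``the sign dictated by moving $f$ past $n$'' is a miscount---$f$ never needs to cross $n$. One can also see the sign is harmless by noting, as the paper does, that $f(m)=0$ unless $\ol{f}=\ol{m}$ and $g(n)=0$ unless $\ol{g}=\ol{n}$.

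To see that your signed formula genuinely fails, take $A=\Lambda(\theta)$ with $\ol{\theta}=1$, $\Delta(\theta)=\theta\otimes 1 + 1\otimes\theta$, $S(\theta)=-\theta$, and $M=N=A$ the regular representation with homogeneous basis $\{1,\theta\}$. A direct check shows $\theta.\bigl(\theta_N^*\otimes\theta_M^*\bigr) = 1_N^*\otimes\theta_M^* - \theta_N^*\otimes 1_M^*$, and the unsigned map sends both sides consistently; but with your extra factor $(-1)^{\ol{f}\cdot\ol{n}}$ one gets $\Psi(\theta_N^*\otimes\theta_M^*) = -(\theta_M\otimes\theta_N)^*$, and then $\Psi(\theta.(\theta_N^*\otimes\theta_M^*))$ and $\theta.\Psi(\theta_N^*\otimes\theta_M^*)$ come out as negatives of each other. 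So the ``four sources of signs'' you list do \emph{not} cancel for your map; they cancel precisely for the unsigned one. Once the sign is dropped, the rest of your plan goes through and agrees with the paper.
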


\begin{proof}
Let $\phi \in N^*$ and let $\psi \in M^*$. Then we consider $\phi \otimes \psi \in N^* \otimes M^*$ as an element of $(M \otimes N)^*$ by setting $(\phi \otimes \psi)(m \otimes n) = \psi(m) \phi(n)$. Since $M$ and $N$ are assumed to be finite-dimensional, this defines a superspace isomorphism $N^* \otimes M^* \cong (M \otimes N)^*$, which the reader can verify is compatible with the action of $A$. (In the course of the verification, it is useful to observe that $\psi(m) = 0$ unless $\ol{\psi} = \ol{m}$, and similarly that $\phi(n) = 0$ unless $\ol{\phi} = \ol{n}$.)
\end{proof}

\begin{lemma} \label{tensorhomadjunction}
Let $A$ be a Hopf superalgebra, let $M$, $N$, and $P$ be $A$-supermodules, and suppose that $N$ is finite-dimensional. Then there exist natural even isomorphisms
\[
\Hom_A(M \otimes N,P) \cong \Hom_A(M,P \otimes N^*) \cong \Hom_A(M,\Hom_k(N,P)).
\]
\end{lemma}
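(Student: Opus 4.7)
The plan is to treat the two claimed isomorphisms separately. The second isomorphism $\Hom_A(M, P \otimes N^*) \cong \Hom_A(M, \Hom_k(N, P))$ is immediate: it comes from applying the functor $\Hom_A(M, -)$ to the $A$-supermodule isomorphism $P \otimes N^* \cong \Hom_k(N, P)$ recorded just before \cref{tensorproductdual}, whose finite-dimensionality hypothesis holds for $N$.

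For the first isomorphism I would construct the adjunction map $\Phi \colon \Hom_A(M \otimes N, P) \to \Hom_A(M, \Hom_k(N, P))$ by the unadorned formula $\Phi(f)(m)(n) = f(m \otimes n)$, together with its candidate inverse $\Psi(g)(m \otimes n) = g(m)(n)$. Two parts are essentially formal: (i) $\Phi$ and $\Psi$ are mutually inverse parity-preserving linear maps between the underlying $\Hom_k$ spaces, mirroring the usual tensor-hom adjunction in $\fsvec$; (ii) naturality in each of $M$, $N$, $P$ is automatic from the definitions. The real content is to verify that $\Phi$ carries $A$-supermodule homomorphisms to $A$-supermodule homomorphisms (and analogously for $\Psi$).

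For $A$-linearity, assume $f$ is even and $A$-linear, and expand, using the formula for the $A$-action on $\Hom_k(N, P)$,
\[
(a.\Phi(f)(m))(n) = \sum (-1)^{\ol{m}\,\ol{a_2}}\, a_1.f(m \otimes S(a_2).n).
\]
Invoking the $A$-linearity of $f$ together with the coproduct formula on $M \otimes N$ and coassociativity (writing $(\Delta \otimes \id)\Delta(a) = \sum a_1 \otimes a_2 \otimes a_3$) rewrites the right hand side as
\[
\sum (-1)^{\ol{m}(\ol{a_2} + \ol{a_3})}\, f(a_1.m \otimes a_2 S(a_3).n).
\]
The antipode identity $\sum a_1 \otimes a_2 S(a_3) = a \otimes 1$ in $A \otimes A$, which follows from $\mu \circ (\id \otimes S) \circ \Delta = \eta \varepsilon$ together with coassociativity, then collapses this expression to $f((a.m) \otimes n) = \Phi(f)(a.m)(n)$, as required. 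The corresponding verification for $\Psi$ is essentially the same calculation run in reverse.

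The main obstacle is sign bookkeeping in the $A$-linearity calculation. One must use repeatedly that $S$, $\Delta$, $\mu$, $\varepsilon$, and $\eta$ are even maps and that the parity of $\Phi(f)(m)$ is $\ol{f} + \ol{m}$, so that the signs coming from the action on $\Hom_k(N,P)$, from the coproduct acting on $M \otimes N$, and from the possibly odd parity of $f$ or $g$ itself combine consistently; once this is accounted for, each term in the sum above can be viewed as the value of the even linear functional $b \otimes c \mapsto (-1)^{\ol{m}\,\ol{c}} f(b.m \otimes c.n)$ on the tensor $\sum a_1 \otimes a_2 S(a_3)$, making the appeal to the antipode identity clean. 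The computation does not use cocommutativity of $A$; only the Hopf superalgebra axioms are required.
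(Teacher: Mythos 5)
Your proposal is correct, but it takes a genuinely different route from the paper. The paper proves $\Hom_A(M\otimes N,P)\cong\Hom_A(M,P\otimes N^*)$ by the ``coevaluation/evaluation'' method: it introduces the maps $\tau\colon k\to N\otimes N^*$ (sending $1$ to the identity, under $\Hom_k(N,N)\cong N\otimes N^*$) and the contraction $c\colon N^*\otimes N\to k$, checks those two maps are $A$-supermodule homomorphisms, and then defines $\Phi(f)=(f\otimes N^*)\circ(M\otimes\tau)$ and $\Theta(g)=(P\otimes c)\circ(g\otimes N)$; that $\Phi$ and $\Theta$ preserve $A$-linearity is then automatic because they are compositions of $A$-supermodule maps with the given one, and all sign bookkeeping is isolated to the verification that $\tau$ and $c$ are homomorphisms. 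You instead construct $\Phi$ and $\Psi$ by the standard currying formulas and verify $A$-linearity directly from the Hopf axioms via the identity $\sum a_1\otimes a_2 S(a_3)=a\otimes 1$; your calculation is sound (the expansion and the appeal to coassociativity and the antipode are exactly right, and the sign $(-1)^{\ol{m}(\ol{a_2}+\ol{a_3})}$ does combine cleanly as you note). The paper's method localizes the sign issues into checking $\tau$ and $c$ once and for all, and scales better to further manipulations (these are precisely the unit and counit of the internal adjunction), while your method is more self-contained and elementary but must redo the sign bookkeeping for odd morphisms rather than inheriting it structurally. Both are legitimate proofs of the lemma, and neither uses cocommutativity, as you correctly observe.
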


\begin{proof}
Let $\tau: k \rightarrow \Hom_k(N,N) \cong N \otimes N^*$ be the linear map that sends a scalar $\lambda \in k$ to the corresponding scalar multiple of the identity map on $N$, and let $c: N^* \otimes N \rightarrow k$ be the contraction map defined for $g \in N^*$ and $n \in N$ by $c(g \otimes n) = g(n)$. It is straightforward to check that $\tau$ and $c$ are $A$-supermodule homomorphisms. We now define even linear maps
\begin{align*}
\Phi: &\Hom_k(M \otimes N,P) \rightarrow \Hom_k(M,P \otimes N^*), \quad \Phi(f) = (f \otimes N^*) \circ (M \otimes \tau), &\text{and} \\
\Theta: &\Hom_k(M,P \otimes N^*) \rightarrow \Hom_k(M \otimes N,P), \quad \Theta(g) = (P \otimes c) \circ (g \otimes N).
\end{align*}
In other words, $\Phi(f)$ is the composite map
\[
M \stackrel{\sim}{\longrightarrow} M \otimes k \stackrel{M \otimes \tau}{\longrightarrow} M \otimes N \otimes N^* \stackrel{f \otimes N^*}{\longrightarrow} P \otimes N^*,
\]
and $\Theta(g)$ is the composite map
\[
M \otimes N \stackrel{g \otimes N}{\longrightarrow} P \otimes N^* \otimes N \stackrel{P \otimes c}{\longrightarrow} P \otimes c \stackrel{\sim}{\longrightarrow} P.
\]
One can check that $\Phi$ and $\Theta$ are inverse superspace isomorphisms. Since $\tau$ and $c$ are $A$-supermodule homomorphisms, it follows that $\Phi$ and $\Theta$ each carry homomorphisms to homomorphisms.
\end{proof}

One can verify as in \cite[Proposition 3.1.5]{Benson:1998} that if $P$ is a projective $A$-supermodule and if $M$ is an arbitrary $A$-supermodule, then $P \otimes M$ is projective. From this and the K\"{u}nnth formula it follows that a tensor product of projective resolutions in $\fsmod_\ev$ is again a projective resolution. Then as for ordinary Hopf algebras \cite[p.\ 57]{Benson:1998} one can define cup products
\[
\cup: \Ext_A^m(M,N) \otimes \Ext_A^n(M',N') \rightarrow \Ext_A^{m+n}(M \otimes M',N \otimes N'), \quad \alpha \otimes \beta \mapsto \alpha \cup \beta.
\]
Assuming that $\alpha$ and $\beta$ are homogeneous, and representing them by exact sequences $E_\alpha$ and $E_\beta$ of the form \eqref{eq:nexactsequence}, $\alpha \cup \beta$ is represented by the tensor product of complexes $E_\alpha \otimes E_\beta$.

Let $P$ be an arbitrary $A$-super\-module. Since the tensor product functor $- \otimes P: M \mapsto M \otimes P$ is exact, it follows from the interpretation of homogeneous elements in $\Ext_A^\bullet(M,N)$ as equivalence classes of exact sequences that $- \otimes P$ induces an even linear map
\begin{equation} \label{eq:tensorproductonext}
\Phi_P: \Ext_A^\bullet(M,N) \rightarrow \Ext_A^\bullet(M \otimes P,N \otimes P), \quad \alpha \mapsto \alpha \otimes P,
\end{equation}
that is compatible with Yoneda products. In terms of cup products, $\alpha \otimes P = \alpha \cup \id_P$, where $\id_P$ denotes the identity map on $P$. Similarly, taking the left cup product with $\id_P$ defines an even linear map $\Ext_A^\bullet(M,N) \rightarrow \Ext_A^\bullet(P \otimes M, P \otimes N)$, denoted $\alpha \mapsto P \otimes \alpha$.

\begin{proposition} \label{cupcomposition}
Let $\alpha \in \Ext_A^m(M,N)$ and let $\beta \in \Ext_A^n(M',N')$ be homogeneous. Then
\begin{equation} \label{eq:MacLane4.1}
\alpha \cup \beta = (\alpha \otimes N') \circ (M \otimes \beta) = (-1)^{m \cdot n+\ol{\alpha} \cdot \ol{\beta}} [(N \otimes \beta) \circ (\alpha \otimes M')].
\end{equation}
\end{proposition}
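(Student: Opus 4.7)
The plan is to pass to bi-homogeneous chain-map representatives on a tensor-product resolution and then apply the Koszul sign rule to two factorizations of $\wt\alpha \otimes \wt\beta$. Choose projective resolutions $P_\bullet \to M$ and $Q_\bullet \to M'$ in $(\fsmod_A)_\ev$, and pick chain-map representatives $\wt\alpha : P_\bullet \to N$ and $\wt\beta : Q_\bullet \to N'$ of $\Z_2$-degrees $\ol{\alpha}$ and $\ol{\beta}$, concentrated in homological degrees $m$ and $n$ respectively. As recalled immediately before the proposition, $P_\bullet \otimes Q_\bullet$ is a projective resolution of $M \otimes M'$, and $\alpha \cup \beta$ is represented on this resolution by the Koszul tensor product of chain maps $\wt\alpha \otimes \wt\beta : P_\bullet \otimes Q_\bullet \to N \otimes N'$.

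I would then compute each Yoneda composite on the right-hand side of the proposition using the same resolution $P_\bullet \otimes Q_\bullet$. Composing with the augmentations of $P_\bullet$ and $Q_\bullet$ shows that $1_{P_\bullet} \otimes \wt\beta$ and $\wt\alpha \otimes 1_{N'}$ represent $M \otimes \beta$ and $\alpha \otimes N'$ respectively, while $\wt\alpha \otimes 1_{Q_\bullet}$ and $1_N \otimes \wt\beta$ represent $\alpha \otimes M'$ and $N \otimes \beta$. Since chain-map composition on $P_\bullet \otimes Q_\bullet$ descends to Yoneda composition in $\Ext^\bullet$, the two splices in the proposition are represented by $(\wt\alpha \otimes 1_{N'})(1_{P_\bullet} \otimes \wt\beta)$ and $(1_N \otimes \wt\beta)(\wt\alpha \otimes 1_{Q_\bullet})$. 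A direct evaluation using the super Koszul rule $(f \otimes g)(x \otimes y) = (-1)^{|g| \cdot |x| + \ol{g} \cdot \ol{x}}\, f(x) \otimes g(y)$ for bi-homogeneous maps $f,g$ yields
\[
(\wt\alpha \otimes 1_{N'})(1_{P_\bullet} \otimes \wt\beta) \;=\; \wt\alpha \otimes \wt\beta \;=\; (-1)^{mn + \ol{\alpha} \cdot \ol{\beta}}\,(1_N \otimes \wt\beta)(\wt\alpha \otimes 1_{Q_\bullet}),
\]
where the second equality comes from sliding $\wt\alpha$ past $\wt\beta$. Passing to $\Ext$-classes produces both equalities of the proposition.

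The main obstacle is confirming the sign $(-1)^{mn + \ol{\alpha} \cdot \ol{\beta}}$ in the graded-commutativity identity above. The classical MacLane contribution $(-1)^{mn}$ is standard, but the additional super Koszul factor $(-1)^{\ol{\alpha} \cdot \ol{\beta}}$ requires careful use of the sign conventions from \cref{subsec:repcohomologysupport}. Once these are fixed, a one-line calculation moving $\wt\beta$ past $\wt\alpha(p)$ (of bidegree $(|p|+m,\, \ol{p} + \ol{\alpha})$) produces precisely the claimed factor. A secondary subtlety is the verification that $1_{P_\bullet} \otimes \wt\beta$ and $\wt\alpha \otimes 1_{N'}$ (and their partners) indeed represent the stated $\Ext$-classes, but this follows from comparing them with the natural representatives $1_M \otimes \wt\beta$ and $\wt\alpha \otimes 1_{N'}$ on the (non-projective) exact resolutions $M \otimes Q_\bullet$ and $P_\bullet \otimes N'$ via the augmentation maps.
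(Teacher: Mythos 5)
Your proof is correct and takes essentially the same route as the paper: pass to chain-map representatives, represent the cup product by the tensor product of chain maps on the tensor-product resolution, and compare the two Yoneda composites to $\wt\alpha \otimes \wt\beta$ via the super Koszul rule, which produces the sign $(-1)^{mn+\ol\alpha\cdot\ol\beta}$ from sliding $\wt\beta$ past $\wt\alpha(p)$. The one cosmetic difference is the choice of representatives: the paper takes projective resolutions $P_M, P_N, P_{M'}, P_{N'}$ of all four modules and chain maps $\ol\alpha\colon P_M\to P_N$, $\ol\beta\colon P_{M'}\to P_{N'}$ between resolutions, so every Yoneda composition is literally a composition of chain maps between projective complexes; you instead map into the modules themselves, which forces the intermediate complex $P_\bullet\otimes N'$ (resp.\ $M\otimes Q_\bullet$) to be exact but not projective. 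You correctly flag this as the secondary subtlety, and the comparison-with-augmentation argument you sketch does resolve it, but the paper's setup sidesteps that justification entirely at the modest cost of carrying two extra resolutions. The key sign computation is identical in either formulation, and your derivation of it is accurate.
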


\begin{proof}
Let $P_M$, $P_N$, $P_{M'}$, and $P_{N'}$ be projective resolutions in $\fsmod_\ev$ of $M$, $N$, $M'$, and $N'$, respectively. Then by the discussion in \cite[\S3.2]{Drupieski:2016} (replacing $\bsp$ with $\fsmod_A$), the even (resp.\ odd) subspace of $\Ext_A^m(M,N)$ identifies with the vector space of homotopy classes of even (resp.\ odd) chain maps $P_M \rightarrow P_N$ of degree $-m$, and similarly for $\Ext_A^n(M',N')$. We can thus identify $\alpha$ and $\beta$ with representative homogeneous chain maps $\ol{\alpha}: P_M \rightarrow P_N$ and $\ol{\beta}: P_{M'} \rightarrow P_{N'}$ of degrees $-m$ and $-n$, respectively. Now the tensor products of complexes $P_M \otimes P_N$ and $P_{M'} \otimes P_{N'}$ are projective resolutions in $\fsmod_\ev$ for $M \otimes N$ and $M' \otimes N'$, and the cup product $\alpha \cup \beta$ is represented by the chain map $\ol{\alpha} \otimes \ol{\beta}: P_M \otimes P_N \rightarrow P_{M'} \otimes P_{N'}$. On the other hand, the extension class $\alpha \otimes N'$ is represented by the chain map $\ol{\alpha} \otimes P_{N'}$, i.e., the chain map $P_M \otimes P_{N'} \rightarrow P_{M'} \otimes P_{N'}$ that acts via $\ol{\alpha}$ on the first factor and via the identity map on the second factor. Similarly, $M \otimes \beta$, $N \otimes \beta$, and $\alpha \otimes M'$ are represented by $P_M \otimes \ol{\beta}$, $P_N \otimes \ol{\beta}$, and $\ol{\alpha} \otimes P_{M'}$, respectively. With these identifications, the Yoneda product is induced by the composition of chain maps. Now the reader can check \eqref{eq:MacLane4.1} by verifying the relevant equalities between chain map representatives.
\end{proof}

\begin{corollary} \label{cor:gradedcommutativity}
Let $A$ be a Hopf superalgebra and let $M$ be an $A$-supermodule. Then the Yoneda and cup products on $\Hbul(A,k)$ agree, and the image of the algebra homomorphism $\Phi_M: \Hbul(A,k) \rightarrow \Ext_A^\bullet(M,M)$ is central in $\Ext_A^\bullet(M,M)$ in the sense that if $\alpha \in \opH^n(A,k)$ and $\beta \in \Ext_A^m(M,M)$,
\[
\Phi_M(\alpha) \circ \beta = (-1)^{m \cdot n + \ol{\alpha} \cdot \ol{\beta}} \beta \circ \Phi_M(\alpha).
\]
In particular, the cohomological grading makes $\Hbul(A,k)$ into a graded-commutative superalgebra.
\end{corollary}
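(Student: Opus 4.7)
The plan is to deduce all three parts of the corollary directly from \cref{cupcomposition} by specializing one or more of the four modules $M, N, M', N'$ in \eqref{eq:MacLane4.1} to the trivial module $k$ and exploiting the natural isomorphisms $V \otimes k \cong V \cong k \otimes V$. The key observation, which falls out of the chain-map description of tensoring used in the proof of \cref{cupcomposition}, is that under these associator identifications the maps $\alpha \mapsto \alpha \otimes k$ and $\beta \mapsto k \otimes \beta$ become the identity on $\Ext_A^\bullet$. In other words, $\Phi_k$ is the identity endomorphism of $\Hbul(A,k)$ (and analogously for the left-tensor variant).

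\textbf{Step 1: Yoneda and cup products agree on $\Hbul(A,k)$.} Given homogeneous $\alpha,\beta \in \Hbul(A,k)$, apply the first equality of \eqref{eq:MacLane4.1} with $M = N = M' = N' = k$. Using the identifications above, $\alpha \otimes k = \alpha$ and $k \otimes \beta = \beta$ as elements of $\Hbul(A,k)$, and the formula collapses to $\alpha \cup \beta = \alpha \circ \beta$.

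\textbf{Step 2: Centrality of the image of $\Phi_M$.} Given $\alpha \in \opH^n(A,k)$ and $\beta \in \Ext_A^m(M,M)$, apply \eqref{eq:MacLane4.1} with the first pair of modules $(M, N) = (k, k)$ and the second pair $(M', N') = (M, M)$. The first equality there reads
\[
\alpha \cup \beta \;=\; (\alpha \otimes M) \circ (k \otimes \beta) \;=\; \Phi_M(\alpha) \circ \beta,
\]
after identifying $k \otimes \beta$ with $\beta$ and invoking the definition $\Phi_M(\alpha) = \alpha \otimes M$ from \eqref{eq:tensorproductonext}. The second equality reads
\[
\alpha \cup \beta \;=\; (-1)^{mn + \ol{\alpha}\cdot\ol{\beta}}\bigl[(k \otimes \beta) \circ (\alpha \otimes M)\bigr] \;=\; (-1)^{mn + \ol{\alpha}\cdot\ol{\beta}}\, \beta \circ \Phi_M(\alpha).
\]
Equating the two expressions for $\alpha \cup \beta$ gives the centrality identity $\Phi_M(\alpha) \circ \beta = (-1)^{mn + \ol{\alpha}\cdot\ol{\beta}}\, \beta \circ \Phi_M(\alpha)$.

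\textbf{Step 3: Graded-commutativity of $\Hbul(A,k)$.} Specialize the centrality statement of Step 2 to $M = k$. Then $\Phi_k$ is the identity on $\Hbul(A,k)$, so the identity becomes $\alpha \circ \beta = (-1)^{mn + \ol{\alpha}\cdot\ol{\beta}}\,\beta \circ \alpha$ for all $\alpha \in \opH^n(A,k)$ and $\beta \in \opH^m(A,k)$, which is exactly graded-commutativity.

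The only part of this argument that is not purely formal is the assertion that tensoring an Ext class with $k$ returns the original class. This should be straightforward from the chain-map interpretation used in the proof of \cref{cupcomposition}: a representative chain map for $\alpha \otimes k$ is $\ol{\alpha} \otimes \id_{P_k}$, and the natural associator identification of $P_M \otimes P_k$ with a projective resolution of $M \otimes k \cong M$ carries this back to $\ol{\alpha}$ itself. Once that identification is in hand, the remainder is bookkeeping with the signs already present in \eqref{eq:MacLane4.1}.
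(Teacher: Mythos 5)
Your proposal is correct and takes essentially the same approach as the paper. The paper states the corollary with no separate proof, treating it as immediate from \cref{cupcomposition}, and your specializations of the four modules in \eqref{eq:MacLane4.1} (all equal to $k$ in Step 1; $(k,k)$ and $(M,M)$ in Step 2) together with the canonical identifications $k \otimes M \cong M \cong M \otimes k$ are exactly the intended derivation.
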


\begin{remark}
The last statement of \cref{cor:gradedcommutativity} is also a consequence of \cite[Corollary 3.13]{Mastnak:2010}.
\end{remark}

Since $\Hbul(A,k)$ is a graded-commutative superalgebra, we can make the following definitions:

\begin{definition}
Let $A$ be a Hopf superalgebra, and let $M$ and $N$ be left $A$-supermodules. Let $I_A(M,N)$ be the annihilator ideal for the left cup product action of $\Hbul(A,k)$ on $\Ext_A^\bullet(M,N)$, and set $I_A(M) = I_A(M,M) = \ker(\Phi_M)$. Define the cohomology variety $\abs{A}$ of $A$ by
\[
\abs{A} = \Max \left( \Hbul(A,k) \right).
\]
Given left $A$-supermodules $M$ and $N$, define the relative support variety $\abs{A}_{(M,N)}$ by
\[
\abs{A}_{(M,N)} = \Max \left( \Hbul(A,k)/I_A(M,N) \right),
\]
and define the support variety $\abs{A}_M$ by
\[
\abs{A}_{M} = \abs{A}_{(M,M)} = \Max \left( \Hbul(A,k)/I_A(M) \right).
\]
\end{definition}

\begin{proposition} \label{standardproperties}
Let $A$ be a Hopf superalgebra and let $M$ and $N$ be $A$-supermodules. Then:
	\begin{enumerate}
	\item $\abs{A}_{M \otimes N} \subseteq \abs{A}_M$. If $A$ is cocommutative, then $\abs{A}_{M \otimes N} \subseteq \abs{A}_M \cap \abs{A}_N$.\label{item:tensorintersection}
	
	\item $\abs{A}_{(M,N)} \subseteq \abs{A}_M \cap \abs{A}_N$.\label{item:relativesupport}

	\item $\abs{A}_{(\Pi(M),N)} = \abs{A}_{(M,N)} = \abs{A}_{(M,\Pi(N))}$. In particular, $\abs{A}_{\Pi(M)} = \abs{A}_M$. \label{item:parity}
	
	\item $\abs{A}_{M \oplus N} = \abs{A}_M \cup \abs{A}_N$. \label{item:directsum}
	\end{enumerate}
\end{proposition}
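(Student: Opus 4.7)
The plan is to reduce each statement about varieties to a statement about the annihilator ideals $I_A(M)$, $I_A(N)$, $I_A(M,N)$, using the fact that for homogeneous ideals $J,J'$ in the graded-commutative superalgebra $\Hbul(A,k)$ the inclusion $V(J) \subseteq V(J')$ is equivalent to $\sqrt{J'} \subseteq \sqrt{J}$, and that $V(J \cap J') = V(J) \cup V(J')$ and $V(J + J') = V(J) \cap V(J')$. Throughout, the key algebraic tool is \cref{cupcomposition}, which identifies cup products with Yoneda composites of $\Phi_M$'s applied to tensor factors; together with associativity of cup product this gives the relation $\Phi_{M \otimes N}(\alpha) = \Phi_M(\alpha) \cup \id_N = \id_M \cup \Phi_N(\alpha)$ for $\alpha \in \Hbul(A,k)$, and identifies the left cup product action $\alpha \cup (-) \colon \Ext_A^\bullet(M,N) \to \Ext_A^\bullet(M,N)$ with left Yoneda composition by $\Phi_N(\alpha)$, and (up to sign) with right Yoneda composition by $\Phi_M(\alpha)$.

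For \eqref{item:tensorintersection}, I would first show $I_A(M) \subseteq I_A(M \otimes N)$ by noting that if $\Phi_M(\alpha) = 0$, then applying the exact functor $- \otimes N$ to a representative sequence yields $\Phi_{M \otimes N}(\alpha) = \Phi_M(\alpha) \cup \id_N = 0$. When $A$ is cocommutative, the supertwist $T \colon M \otimes N \xrightarrow{\sim} N \otimes M$ is an $A$-super\-module isomorphism, hence $\abs{A}_{M \otimes N} = \abs{A}_{N \otimes M}$, and the already-established inclusion applied with the roles reversed yields $\abs{A}_{M \otimes N} \subseteq \abs{A}_N$ as well. For \eqref{item:relativesupport}, the identities $\alpha \cup \gamma = \Phi_N(\alpha) \circ \gamma$ and $\alpha \cup \gamma = \pm \gamma \circ \Phi_M(\alpha)$ from \cref{cupcomposition} immediately show that $I_A(M) + I_A(N) \subseteq I_A(M,N)$, from which the desired inclusion of varieties follows.

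For \eqref{item:parity}, I would invoke the natural odd isomorphisms \eqref{eq:homparityflip}, which by the usual derived functor argument (or equivalently by applying the parity change functor to projective resolutions in $\fsmod_\ev$) lift to natural odd isomorphisms $\Ext_A^\bullet(M,N) \simeq \Ext_A^\bullet(M,\Pi N) \simeq \Ext_A^\bullet(\Pi M, N)$ of graded superspaces. Since these isomorphisms are natural in each variable, they intertwine the left cup product action of $\Hbul(A,k)$ (up to an overall odd shift that does not affect the annihilator); consequently the three relative annihilator ideals coincide. For \eqref{item:directsum}, I would use the natural decomposition $\Ext_A^\bullet(M \oplus N, M \oplus N) \cong \bigoplus \Ext_A^\bullet(X,Y)$ over $X,Y \in \{M,N\}$, under which $\Phi_{M \oplus N}(\alpha)$ decomposes as $\Phi_M(\alpha) \oplus \Phi_N(\alpha)$ in the diagonal blocks; this yields $I_A(M \oplus N) = I_A(M) \cap I_A(N)$ and hence $\abs{A}_{M \oplus N} = V(I_A(M)) \cup V(I_A(N)) = \abs{A}_M \cup \abs{A}_N$.

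The only step that requires genuine bookkeeping is the identification $\Phi_{M \otimes N}(\alpha) = \Phi_M(\alpha) \cup \id_N$ and the analogous formulas for the left cup product action on $\Ext_A^\bullet(M,N)$; these follow readily from \cref{cupcomposition} but are the essential input that links the tensor-with-$N$ functor on $\Ext$ to the cup product action. All remaining steps are formal consequences of standard properties of the Max spectrum of a graded-commutative superalgebra together with the parity change functor.
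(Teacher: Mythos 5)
Your proposal is correct and follows essentially the same route as the paper: reduce everything to statements about the annihilator ideals, use \cref{cupcomposition} to rewrite the cup product action on $\Ext_A^\bullet(M,N)$ as left Yoneda composition with $\Phi_N$ (resp.\ right Yoneda with $\Phi_M$, up to sign), use the supertwist for cocommutativity, and use the odd isomorphisms \eqref{eq:homparityflip} and their compatibility with Yoneda composition for the parity claims. The only point worth noting is that in part \eqref{item:directsum} your argument is a modest streamlining: you observe directly that $\Phi_{M \oplus N}(\alpha)$ has vanishing off-diagonal blocks, so $\ker(\Phi_{M \oplus N}) = \ker(\Phi_M) \cap \ker(\Phi_N)$ at once, whereas the paper first writes $I_A(M \oplus N)$ as the four-fold intersection $I_A(M) \cap I_A(N) \cap I_A(M,N) \cap I_A(N,M)$ and then invokes the containment $I_A(M) + I_A(N) \subseteq I_A(M,N), I_A(N,M)$ from part \eqref{item:relativesupport} to discard the two off-diagonal terms; the two derivations are equally valid and land in the same place.
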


\begin{proof}
First, from the interpretation of homogeneous elements in $\Hbul(A,k)$ as equivalence classes of length-$n$ homogeneous exact sequences, it is clear that $\Phi_{M \otimes N}: \Hbul(A,k) \rightarrow \Ext_A^\bullet(M \otimes N,M \otimes N)$ factors through the map $\Phi_M: \Hbul(A,k) \rightarrow \Ext_A^\bullet(M,M)$, and hence $\abs{A}_{M \otimes N} \subseteq \abs{A}_M$. If in addition $A$ is cocommutative, then the supertwist map defines an $A$-supermodule isomorphism $M \otimes N \cong N \otimes M$. Then $\abs{A}_{M \otimes N} = \abs{A}_{N \otimes M} \subseteq \abs{A}_N$, so $\abs{A}_{M \otimes N} \subseteq \abs{A}_M \cap \abs{A}_N$.

Next, let $\alpha \in \Hbul(A,k)$ and $\beta \in \Ext_A^\bullet(M,N)$ be homogeneous. By \cref{cupcomposition}, $\alpha \cup \beta = \Phi_N(\alpha) \circ \beta = \pm (\beta \circ \Phi_M(\alpha))$. Then $I_A(M,N) \supseteq I_A(M) + I_A(N)$, so $\abs{A}_{(M,N)} \subseteq \abs{A}_M \cap \abs{A}_N$. For the equalities in (\ref{item:parity}), observe that the odd isomorphisms \eqref{eq:homparityflip} induce odd isomorphisms
\begin{equation} \label{eq:extparityflip}
\Ext_A^\bullet(M,N) \simeq \Ext_A^\bullet(\Pi(M),N) \quad \text{and} \quad \Ext_A^\bullet(M,N) \simeq \Ext_A^\bullet(M,\Pi(N)) 
\end{equation}
that are compatible with left Yoneda multiplication by $\Ext_A^\bullet(N,N)$ and right Yoneda multiplication by $\Ext_A^\bullet(M,M)$, respectively (cf.\ \cite[Remark 3.2.2]{Drupieski:2016}). Then the equality $\alpha \cup \beta = \Phi_N(\alpha) \circ \beta$ implies that $I_A(M,N) = I_A(\Pi(M),N)$ while the equality $\alpha \cup \beta = \pm (\beta \circ \Phi_M(\alpha))$ implies that $I_A(M,N) = I_A(M,\Pi(N))$. Then $\abs{A}_{(\Pi(M),N)} = \abs{A}_{(M,N)} = \abs{A}_{(M,\Pi(N))}$.

Finally, observe that $\Ext_A^\bullet(M \oplus N,M \oplus N)$ decomposes into the direct sum of $\Ext_A^\bullet(M,M)$, $\Ext_A^\bullet(N,N)$, $\Ext_A^\bullet(M,N)$, and $\Ext_A^\bullet(N,M)$. This implies that
\[
I_A(M \oplus N) = I_A(M) \cap I_A(N) \cap I_A(M,N) \cap I_A(N,M).
\]
But $I_A(M,N)$ and $I_A(N,M)$ both contain $I_A(M) + I_A(N)$ as observed in the previous paragraph, so $I_A(M,N) = I_A(M) \cap I_A(N)$. This implies that $\abs{A}_{M \oplus N} = \abs{A}_M \cup \abs{A}_N$.
\end{proof}

\begin{remark}
If $A$ is not cocommutative, one need not have $\abs{A}_{M \otimes N} \subseteq \abs{A}_M \cap \abs{A}_N$; see \cite{Benson:2014,Feldvoss:2015}.
\end{remark}

\begin{proposition} \label{relativeses}
Let $A$ be a Hopf superalgebra, and let $M$ be an $A$-supermodule.
	\begin{enumerate}
	\item Let $0 \rightarrow M_1 \rightarrow M_2 \rightarrow M_3 \rightarrow 0$ be a short exact sequence in $(\fsmod_A)_\ev$. Then
	\[
	\abs{A}_{(M,M_r)} \subseteq \abs{A}_{(M,M_s)} \cup \abs{A}_{(M,M_t)} \quad \text{and} \quad \abs{A}_{(M_r,M)} \subseteq \abs{A}_{(M_s,M)} \cup \abs{A}_{(M_t,M)}
	\]
	whenever $\set{r,s,t} = \set{1,2,3}$. \label{item:2outof3}
	
	\item \label{item:relativesimple} Suppose $M$ is finite-dimensional. Then
	\[
	\abs{A}_M = \bigcup_S \abs{A}_{(M,S)} = \bigcup_S \abs{A}_{(S,M)},
	\]
	where the unions are taken over the $A$-supermodule composition factors of $A$.
	\end{enumerate}
\end{proposition}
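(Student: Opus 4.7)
The plan is to exploit the long exact sequences of $\Ext$-groups induced by the given short exact sequence, together with the compatibility of the cup product with the connecting homomorphism and with maps induced by $A$-supermodule homomorphisms.

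For part (\ref{item:2outof3}), the key claim I would establish is the ideal containment
$$I_A(M, M_s) \cdot I_A(M, M_t) \subseteq I_A(M, M_r)$$
in $\Hbul(A,k)$. This implies the desired inclusion of varieties because the union of two closed subvarieties is cut out by the product of their defining ideals. To prove the containment, fix homogeneous $\alpha_s \in I_A(M, M_s)$, $\alpha_t \in I_A(M, M_t)$, and $\beta \in \Ext_A^\bullet(M, M_r)$, and consider the long exact sequence
$$\cdots \to \Ext_A^{n-1}(M, M_3) \xrightarrow{\delta} \Ext_A^n(M, M_1) \xrightarrow{f_*} \Ext_A^n(M, M_2) \xrightarrow{g_*} \Ext_A^n(M, M_3) \to \cdots$$
obtained by applying $\Ext_A^\bullet(M, -)$ to the given short exact sequence. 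The outgoing and incoming maps at $\Ext_A^\bullet(M, M_r)$ connect it to the two other $\Ext$-groups appearing in the sequence. After possibly interchanging the roles of $\alpha_s$ and $\alpha_t$, one may arrange that $\alpha_s \cup \beta$ is sent to zero by the outgoing map from $\Ext_A^\bullet(M, M_r)$, because its target $\Ext$-group is annihilated by $\alpha_s$. By exactness, $\alpha_s \cup \beta$ then lies in the image of the incoming map, so applying $\alpha_t \cup (-)$ and using the compatibility of cup products with the induced and connecting maps together with $\alpha_t \in I_A(M, M_t)$ forces $\alpha_t \cup \alpha_s \cup \beta = 0$. Graded commutativity of $\Hbul(A,k)$ then yields $(\alpha_s \alpha_t) \cup \beta = 0$, as desired. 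The analogous argument with the long exact sequence of $\Ext_A^\bullet(-, M)$ handles the second inclusion.

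For part (\ref{item:relativesimple}), the inclusions $\bigcup_S \abs{A}_{(M, S)} \subseteq \abs{A}_M$ and $\bigcup_S \abs{A}_{(S, M)} \subseteq \abs{A}_M$ are immediate from \cref{standardproperties}(\ref{item:relativesupport}). For the reverse direction, I would prove by induction on the length $\ell(N)$ of a finite-dimensional $A$-supermodule $N$ the stronger assertion
$$\abs{A}_{(M, N)} \subseteq \bigcup_{S} \abs{A}_{(M, S)},$$
where $S$ ranges over the composition factors of $N$. The base case $\ell(N) = 1$ is tautological. For the inductive step, pick a simple submodule $S \hookrightarrow N$ with quotient $N' = N/S$ of length $\ell(N) - 1$; the short exact sequence $0 \to S \to N \to N' \to 0$, combined with part (\ref{item:2outof3}) and the induction hypothesis applied to $N'$, yields the claim. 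Setting $N = M$ gives $\abs{A}_M \subseteq \bigcup_S \abs{A}_{(M, S)}$, and the dual induction in the first variable produces the remaining equality.

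The main technical obstacle is verifying the compatibility of the cup product with the connecting homomorphism $\delta$ in this $\Z_2$-graded setting, namely an identity of the form $\alpha \cup \delta(\gamma) = \pm \delta(\alpha \cup \gamma)$. The analogous identity is standard in the classical setting, but its super analogue must be checked against the Yoneda-composition interpretation of cup products recorded in \cref{cupcomposition}. Fortunately the precise signs are immaterial for our vanishing conclusions, so once the compatibility is established the argument at the level of varieties goes through unchanged.
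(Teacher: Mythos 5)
Your proof is correct and follows essentially the same route as the paper: establishing the ideal containment $I_A(M,M_s)\cdot I_A(M,M_t) \subseteq I_A(M,M_r)$ from the long exact sequence, and then inducting on composition factors for part (2). The technical point you flag about compatibility of the cup product with the connecting homomorphism is handled in the paper by observing that the connecting map is given by left Yoneda multiplication by the extension class of the short exact sequence in $\Ext_A^1(M_3,M_1)$, which by \cref{cupcomposition} commutes (up to sign) with the cup product action of $\Hbul(A,k)$.
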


\begin{proof}
Let $0 \rightarrow M_1 \rightarrow M_2 \rightarrow M_3 \rightarrow 0$ be a short exact sequence in $(\fsmod_A)_\ev$. Then there exists a corresponding long exact sequence 
\[
\cdots \rightarrow \Ext_A^n(M,M_1) \rightarrow \Ext_A^n(M,M_2) \rightarrow \Ext_A^n(M,M_3) \rightarrow \Ext_A^{n+1}(M,M_1) \rightarrow \cdots
\]
whose connecting homomorphisms are given by left Yoneda multiplication by the extension class in $\Ext_A^1(M_3,M_1)$ of the original short exact sequence. In particular, the connecting homomorphisms commute with the right cup product action of $\Hbul(A,k)$, as do the other maps in the sequence. Now from the exactness of the sequence it follows that if $\set{r,s,t} = \set{1,2,3}$, then $I_A(M,M_r) \supseteq I_A(M,M_s) \cdot I_A(M,M_t)$, and hence $\abs{A}_{(M,M_r)} \subseteq \abs{A}_{(M,M_s)} \cup \abs{A}_{(M,M_t)}$. The second stated inclusion in (\ref{item:2outof3}) follows via a similar argument by considering the long exact sequence for $\Ext$ in the first variable. Now suppose $M$ is finite-dimensional. Arguing by induction on the number of $A$-supermodule composition factors in $M$, the inclusions $\abs{A}_M \subseteq \bigcup_S \abs{A}_{(M,S)}$ and $\abs{A}_M \subseteq \bigcup_S \abs{A}_{(S,M)}$ follow from (\ref{item:2outof3}), while the reverse inclusions follow from \cref{standardproperties}(\ref{item:relativesupport}).
\end{proof}

\begin{definition} \label{def:complexity}
Let $M$ be an $A$-supermodule, and let
\[
\cdots \rightarrow P_2 \rightarrow P_1 \rightarrow P_0 \rightarrow M \rightarrow 0
\]
be a minimal projective resolution of $M$ in $(\fsmod_A)_\ev$. The complexity of $M$ as an $A$-supermodule, denoted $\cx_A(M)$, is the least element $s \in \N \cup \set{\infty}$ for which there exists a positive real number $\kappa$ such that $\dim P_n \leq \kappa \cdot n^{s-1}$ for all $n > 0$. We declare $\cx_{A}(M)=\infty$ if no such $s$ exists. Equivalently, $\cx_A(M)$ is the complexity of $M$ as a module for the smash product algebra $A \# k\Z_2$, i.e., $\cx_A(M) = \cx_{A \# k\Z_2}(M)$.
\end{definition}

In the next proposition we introduce the assumption that $k$ is algebraically closed so that we can apply results from the literature on support varieties and the dimensions of affine varieties.

\begin{proposition} \label{dimensioncomplexity}
Let $k$ be an algebraically closed field of characteristic $p \neq 2$, and let $A$ be a finite-dimensional $k$-Hopf superalgebra. Suppose that $\Hbul(A,k)$ is a finitely-generated superalgebra, and suppose for each pair of finite-dimensional $A$-supermodules $M$ and $N$ that the cup product makes $\Ext_A^\bullet(M,N)$ into a finitely-generated left $\Hbul(A,k)$-module. Then
\[
\dim \abs{A}_M = \cx_A(M).
\]
In particular, $\abs{A}_M = \set{0}$ if and only if $M$ is projective.
\end{proposition}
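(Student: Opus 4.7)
The plan is to reduce the problem to the ordinary finite-dimensional Hopf algebra $B := A \# k\Z_2$, where classical support variety theory applies, and then compare the two cohomological spectra via an integrality argument.

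By the discussion in \cref{subsec:repcohomologysupport}, the category $(\fsmod_A)_\ev$ is equivalent to the module category of $B$, and one has $\Ext_B^\bullet(M,N) = \Ext_A^\bullet(M,N)_{\zero}$; in particular $H_B := \Hbul(B,k) = \Hbul(A,k)_{\zero}$, and by \cref{def:complexity} one has $\cx_A(M) = \cx_B(M)$. To transfer the finite generation hypotheses from $\Hbul(A,k)$ to $H_B$, the key observation is that, by graded-commutativity and $p \neq 2$, every homogeneous element of $\Hbul(A,k)$ is integral of degree at most two over $H_B$: elements of $\Hbul(A,k)_{\zero}^{\odd}$ and $\Hbul(A,k)_{\one}^{\ev}$ square to zero, while the square of any element of $\Hbul(A,k)_{\one}^{\odd}$ lands in $\Hbul(A,k)_{\zero}^{\ev} \subseteq H_B$. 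Combined with the assumed finite generation of $\Hbul(A,k)$ as a $k$-algebra, this makes $\Hbul(A,k)$ a finite module over $H_B$; the Artin--Tate lemma then shows that $H_B$ is itself finitely generated as a $k$-algebra, and $\Ext_B^\bullet(M,N) = \Ext_A^\bullet(M,N)_{\zero}$ becomes a finitely generated $H_B$-module as a summand of $\Ext_A^\bullet(M,N)$.

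With these hypotheses transferred, I would invoke the classical dimension-equals-complexity theorem for ordinary finite-dimensional cocommutative Hopf algebras (of Friedlander--Parshall type) to obtain $\dim \Max(H_B/I_B(M)) = \cx_B(M)$. To conclude the dimension formula for $\abs{A}_M$, compare it with $\Max(H_B/I_B(M))$: restricting $\Phi_M$ to $H_B$ shows $I_A(M) \cap H_B = I_B(M)$, so the inclusion $H_B \hookrightarrow \Hbul(A,k)$ induces an injective ring homomorphism on the quotients, and the integrality established above descends to these quotients. Hence the induced morphism on maximal spectra is finite, and the Krull dimensions agree, yielding $\dim \abs{A}_M = \cx_A(M)$.

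The final assertion follows at once: $\abs{A}_M = \set{0}$ if and only if $\cx_A(M) = 0$, which is equivalent to $M$ having finite projective dimension in $(\fsmod_A)_\ev$; by \cref{L:selfinjective}, projectivity and injectivity coincide for $A$-supermodules, so finite projective dimension forces $M$ to be projective. The main obstacle I anticipate is the comparison step: verifying that integrality passes through the annihilator ideals and that this suffices to equate Krull dimensions. This relies on standard but nontrivial commutative algebra facts (integral plus finitely generated equals finite, finiteness descending to quotients, dimension preservation under finite ring extensions) applied through the graded-commutative structure.
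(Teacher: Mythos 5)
Your proposal follows essentially the same route as the paper: reduce to the smash product $B = A \# k\Z_2$ (so that $\cx_A(M) = \cx_B(M)$ and $\Ext_B^\bullet(M,N) = \Ext_A^\bullet(M,N)_{\zero}$), observe that $\Hbul(A,k)$ is a finite module over $H_B = \Hbul(A,k)_{\zero}$ thanks to graded-commutativity and $p \neq 2$, invoke the dimension-equals-complexity theorem for the ordinary Hopf algebra $B$, and then transfer the Krull dimension back across the finite extension $H_B/I_B(M) \hookrightarrow \Hbul(A,k)/I_A(M)$. The integrality argument and the identification $I_B(M) = I_A(M) \cap H_B$ are exactly the content of the paper's proof.

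One point needs correcting, though it is a citation issue rather than a structural flaw. You invoke ``the classical dimension-equals-complexity theorem for ordinary finite-dimensional \emph{cocommutative} Hopf algebras (of Friedlander--Parshall type)'' for $B$. But $B = A \# k\Z_2$ is not cocommutative in the ordinary sense, even when $A$ is cocommutative as a Hopf superalgebra. For instance, if $A = \Lambda(v)$ with $v$ odd and primitive, then in $A \# k\Z_2$ one computes $\Delta(v \otimes \zero) = (v \otimes \zero) \otimes (1 \otimes \zero) + (1 \otimes \one) \otimes (v \otimes \zero)$, which is visibly not symmetric under the ordinary twist. So the Friedlander--Parshall result for cocommutative Hopf algebras does not apply to $B$. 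What you actually need (and what the paper cites, via K\"{u}lshammer and Feldvoss--Witherspoon) is the dimension-equals-complexity theorem for arbitrary finite-dimensional Hopf algebras, or more generally for self-injective algebras satisfying the appropriate finite-generation hypotheses; this version does cover $B$. With that replacement, your argument is complete and agrees with the paper's.
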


\begin{proof}
The hypotheses imply that the subring $\Hbul(A,k)_{\zero}$ of $\Hbul(A,k)$ is a finitely-generated algebra, and if $M$ and $N$ are finite-dimensional $A$-supermodules, then $\Ext_A^\bullet(M,N)_{\zero}$ is a finitely-generated $\Hbul(A,k)_{\zero}$-module. More generally, the hypotheses imply that $\Hbul(A,k)$ and $\Ext_A^\bullet(M,N)$ are finitely-generated as modules over $\Hbul(A,k)_{\zero}$. Reinterpreting the hypotheses in terms of the smash product algebra $A \# k\Z_2$, which is a Hopf algebra in the ordinary sense, $\Hbul(A \# k\Z_2,k)$ is a finitely-generated algebra and $\Ext_{A \# k\Z_2}^\bullet(M,N)$ is a finite $\Hbul(A \# k\Z_2,k)$-module. Then by \cite[Proposition 3.2.7]{Kulshammer:2012} (see also \cite{Feldvoss:2010,Feldvoss:2015}, though Feldvoss and Witherspoon's definition of the action of a Hopf algebra on a space of linear maps is different from ours),
\[
\cx_A(M) = \cx_{A \# k\Z_2}(M) = \dim \abs{A \# k\Z_2}_M.
\]
Next, restriction from $A \# k\Z_2$ to $A$ induces an algebra homomorphism
\[
\varphi: \Hbul(A \# k\Z_2,k)/I_{A \#k\Z_2}(M) \rightarrow \Hbul(A,k)/I_A(M).
\]
This homomorphism identifies with the inclusion into $\Hbul(A,k)/I_A(M)$ of its even subspace. Since $\Hbul(A,k)$ is finite over $\Hbul(A,k)_{\zero}$ by the hypotheses of the proposition, $\Hbul(A,k)/I_A(M)$ is finite over the image of $\varphi$. Then $\dim \abs{A \# k\Z_2}_M = \dim \abs{A}_M$ because Krull dimension is preserved under integral ring extensions. So $\cx_A(M) = \dim \abs{A}_M$.
\end{proof}

\section{Finite supergroup schemes in characteristic zero} \label{section:char0}

In this section let $k$ be an algebraically closed field of characteristic $0$.

\subsection{Structure of finite supergroup schemes in characteristic zero}

The following theorem \cite[Theorem 3.3]{Kostant:1977}\footnote{As printed, the cited theorem contains a typo: `commutative' should be `cocommutative.'} describes the structure of cocommutative Hopf superalgebras over $k$.

\begin{theorem}[Kostant] \label{Kostant}
Let $k$ be an algebraically closed field of characteristic zero and let $A$ be a cocommutative Hopf super\-algebra over $k$. Let $G$ be the group of group-like elements in $A$, let $\g$ be the Lie superalgebra of primitive elements in $A$, and let $U(\g)$ be the universal enveloping superalgebra of $\g$. Then $A$ is isomorphic as a Hopf superalgebra to the smash product $U(\g) \# kG$ formed with respect to the homomorphism $\pi: G \rightarrow GL(\g)$ defined by $\pi(g)(x) = gxg^{-1}$.
\end{theorem}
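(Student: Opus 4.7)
The plan is to adapt the classical Cartier--Gabriel--Kostant proof for ordinary cocommutative Hopf algebras to the super setting. The starting point is the coalgebra structure on $A$. Because $A$ is pointed (over an algebraically closed field of characteristic $0$, every cocommutative coalgebra is pointed) and cocommutative, one has a decomposition $A = \bigoplus_{g \in G} C_g$ into irreducible components, indexed by the group-like elements $G$, where $C_g$ is the irreducible subcoalgebra containing $g$. The first step I would take is to observe that right multiplication by $g^{-1}$ is an even coalgebra automorphism of $A$ that carries $C_g$ onto $C_1$, so $C_g = C_1 \cdot g$ and $A \cong C_1 \otimes kG$ as right $kG$-comodules (and hence as coalgebras, once one checks compatibility with $\Delta$).

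Next I would identify $C_1$ as a universal enveloping superalgebra. Since $C_1$ is an irreducible cocommutative Hopf sub-superalgebra of $A$, its set of primitive elements is precisely $\g$, and the key structural claim is that the natural map $U(\g) \to C_1$ is an isomorphism of Hopf superalgebras. This is the super analogue of the Cartier--Gabriel theorem. In characteristic zero one can argue as in the classical case using the coradical filtration $C_1 = \bigcup_n F_n$ where $F_0 = k \cdot 1$ and $F_{n+1} = \{a \in C_1 : \Delta(a) - a \otimes 1 - 1 \otimes a \in F_n \otimes F_n\}$; the associated graded is a supercommutative, cocommutative, connected Hopf superalgebra generated in degrees $\le 1$, and one applies the super Milnor--Moore structure theorem to identify $\gr C_1 \cong U(\g)$ as Hopf superalgebras. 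Lifting this identification back to $C_1$ requires the characteristic-zero fact that primitives generate and that the PBW map $S(\g) \to \gr U(\g)$ is an isomorphism. This is the step I expect to be the main obstacle, because one has to be careful with signs (the odd primitives square into the even part, giving an exterior rather than symmetric contribution) and with ensuring that the PBW/exp--log correspondence works in the $\Z_2$-graded setting; the cleanest route is to invoke the existing super Milnor--Moore/Kostant theorem from the literature.

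Once the coalgebra isomorphism $A \cong U(\g) \otimes kG$ is in hand, the remaining task is to upgrade it to an isomorphism of Hopf superalgebras onto the smash product $U(\g) \# kG$. For this I would note that $G$ and $\g$ both sit inside $A$, and that the conjugation action of $G$ on $A$ preserves the set of primitives $\g$ (because conjugation by a group-like element is a Hopf superalgebra automorphism, hence preserves $\Delta$ and therefore carries primitives to primitives). This produces the map $\pi: G \to \mathrm{Aut}(\g) \hookrightarrow GL(\g)$ in the statement. The multiplication map $U(\g) \otimes kG \to A$, $x \otimes g \mapsto x \cdot g$, is then surjective by the coalgebra decomposition and injective by a dimension/filtration comparison in each component, and the relation $g \cdot x = \pi(g)(x) \cdot g$ inside $A$ is exactly the defining relation of the smash product. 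A short verification on generators then shows this is a Hopf superalgebra isomorphism onto $U(\g) \# kG$, completing the proof.
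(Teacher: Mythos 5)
The paper does not prove this theorem: it is quoted directly from Kostant's 1977 paper (cited as \cite{Kostant:1977}, Theorem~3.3, with a footnote even noting a typo in the original). There is therefore no internal proof to compare your attempt against.

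That said, your outline is essentially the classical Cartier--Gabriel--Kostant argument that Kostant himself uses, transported to the super setting: decompose the pointed cocommutative coalgebra into irreducible components indexed by group-likes, translate each component to the identity component $C_1$ to obtain the coalgebra isomorphism $A \cong C_1 \otimes kG$, identify the connected piece $C_1$ with $U(\g)$ using characteristic-zero structure theory, observe that conjugation by group-likes preserves primitives and hence gives the action $\pi$, and finally verify that multiplication $U(\g) \otimes kG \to A$ realizes $A$ as the smash product. Two small points of imprecision worth tightening. First, when you invoke Milnor--Moore on $\gr C_1$ you also need to know that the primitives of $\gr C_1$ are concentrated in degree~$1$ (equivalently that the coradical filtration is ``coradically graded''); this is what guarantees that $\gr C_1$, and hence $C_1$, is generated by $\g = P(C_1)$, rather than by some larger space of graded primitives. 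Second, the super PBW statement should identify $\gr U(\g)$ with the \emph{free supercommutative} algebra $S(\gzero) \otimes \Lambda(\gone)$ on $\g$, not the ordinary symmetric algebra; writing $S(\g)$ is fine if it is understood in this super sense, but the distinction matters precisely because odd primitives contribute an exterior factor. With those clarifications, your sketch is a faithful reconstruction of the cited proof.
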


Since $U(\g)$ is infinite-dimensional whenever $\gzero \neq 0$, we immediately get:

\begin{corollary} \label{cor:Kostant}
Let $k$ be an algebraically closed field of characteristic zero and let $A$ be a finite-dimensional cocommutative Hopf superalgebra over $k$. Then there exists a finite group $G$, a finite-dimensional odd superspace $V$, and a representation of $G$ on $V$ such that $A$ is isomorphic as a Hopf superalgebra to the smash product algebra $\Lambda(V) \# kG$.
\end{corollary}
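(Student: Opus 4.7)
The plan is to deduce the corollary directly from Kostant's theorem by showing that finite-dimensionality of $A$ forces the enveloping superalgebra factor to collapse to an exterior algebra.

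First I would apply \cref{Kostant} to write $A \cong U(\g) \# kG$ as a Hopf superalgebra, where $G$ is the group of group-like elements of $A$ and $\g = \gzero \oplus \gone$ is the Lie superalgebra of primitive elements. Since the group algebra $kG$ and the enveloping algebra $U(\g)$ both sit inside $A$ as subalgebras, and $A$ is finite-dimensional, both $kG$ and $U(\g)$ must be finite-dimensional. Finite-dimensionality of $kG$ immediately gives that the group $G$ is finite.

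Next I would argue that $\gzero = 0$. By the PBW theorem for Lie superalgebras, $U(\g)$ contains $U(\gzero)$ as a subalgebra. In characteristic zero, $U(\gzero)$ is the ordinary universal enveloping algebra of the Lie algebra $\gzero$, and this algebra is infinite-dimensional whenever $\gzero \neq 0$ (for instance because $U(\gzero)$ contains a polynomial algebra in any nonzero element of $\gzero$). Since $U(\g)$ is finite-dimensional, we must therefore have $\gzero = 0$, and hence $\g = \gone$.

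With $\g$ concentrated in odd degree, the Lie superbracket on $\g$ satisfies $[\gone,\gone] \subseteq \gzero = 0$, so the bracket on $\g$ is identically zero. Setting $V = \gone$, the defining relations of $U(\g)$ reduce to $xy + yx = 0$ for all $x,y \in V$, so $U(\g) \cong \Lambda(V)$ as superalgebras. One checks that this identification respects the Hopf structure, since in both $U(\g)$ and $\Lambda(V)$ the elements of $V$ are primitive and these elements generate the algebra. Finally, the homomorphism $\pi: G \to GL(\g) = GL(V)$ from \cref{Kostant} endows $V$ with the structure of a $G$-representation, and Kostant's isomorphism becomes $A \cong \Lambda(V) \# kG$ as Hopf superalgebras.

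The only substantive step in this argument is the vanishing of $\gzero$; everything else is a direct unpacking of Kostant's theorem together with the identification of the enveloping algebra of a purely odd abelian Lie superalgebra with the exterior algebra on its underlying superspace.
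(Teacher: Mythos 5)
Your proof is correct and follows essentially the same route as the paper, which simply observes that $U(\g)$ is infinite-dimensional whenever $\gzero \neq 0$ and then states the corollary as immediate. You spell out the details (finiteness of $G$, vanishing of the bracket on a purely odd Lie superalgebra, identification of $U(\gone)$ with $\Lambda(V)$ as Hopf superalgebras) that the paper leaves implicit, but the substance is the same.
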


Thus, the data of a finite supergroup scheme over an algebraically closed field $k$ of characteristic zero is equivalent to the data of a finite group $G$ and a purely odd finite-dimensional $kG$-module $V$. Given this data, we will denote the corresponding finite supergroup scheme by $V \rtimes G$, and we will identify the category of $V \rtimes G$-supermodules with the category of $\Lambda(V) \# kG$-supermodules. In particular, the support variety $\abs{V \rtimes G}_M$ of a $V \rtimes G$-supermodule $M$ is the support variety $\abs{\Lambda(V) \# kG}_M$ of $M$ as a $\Lambda(V) \# kG$-supermodule.

\subsection{Support varieties }

From \cref{cor:Kostant} we can describe the cohomology ring of any finite supergroup scheme over an algebraically closed field of characteristic zero.

\begin{theorem} \label{cohomologychar0}
Let $k$ be an algebraically closed field of characteristic zero. Let $G$ be a finite group, let $V$ be a purely odd finite-dimensional $kG$-module, and let $M$ and $N$ be finite-dimensional $\Lambda(V) \# kG$-supermodules. Then restriction from $\Lambda(V) \# kG$ to $\Lambda(V)$ induces a natural isomorphism
\begin{equation} \label{eq:resMN}
\Ext_{V \rtimes G}^\bullet(M,N) = \Ext_{\Lambda(V) \# kG}^\bullet(M,N) \cong \Ext_{\Lambda(V)}^\bullet(M,N)^G.
\end{equation}
In particular, restriction to $\Lambda(V)$ induces an isomorphism of graded superalgebras
\begin{equation} \label{eq:resk}
\Hbul(V \rtimes G,k) = \Hbul(\Lambda(V) \# kG,k) \cong \Hbul(\Lambda(V),k)^G \cong S^\bullet(V^*)^G,
\end{equation}
\end{theorem}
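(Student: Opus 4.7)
The plan is to prove \eqref{eq:resMN} first---leveraging that since $\mathrm{char}(k)=0$, Maschke's theorem makes $kG$ semisimple so that the functor of $G$-invariants is exact---and then to specialize to $M=N=k$ in combination with a classical Koszul computation of $\Hbul(\Lambda(V),k)$. The starting point for \eqref{eq:resMN} is the natural superspace identification $\Hom_{\Lambda(V)\#kG}(M,N)=\Hom_{\Lambda(V)}(M,N)^G$, where $G$ acts on $\Hom_{\Lambda(V)}(M,N)$ by conjugation through its actions on $M$ and $N$. I would derive both sides. On the left this directly yields $\Ext^\bullet_{\Lambda(V)\#kG}(M,N)$. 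On the right, I would take a resolution $N\to I^\bullet$ by injectives in $(\fsmod_{\Lambda(V)\#kG})_\ev$, which by \cref{L:selfinjective} are simultaneously projective; since $\Lambda(V)\#kG$ is free of rank $|G|$ as a (left or right) $\Lambda(V)$-module, restriction preserves both projectivity and injectivity, so $N\to I^\bullet$ remains an injective resolution over $\Lambda(V)$. Exactness of $(-)^G$ then allows it to commute past the cohomology of $\Hom_{\Lambda(V)}(M,I^\bullet)$, producing $\Ext^\bullet_{\Lambda(V)}(M,N)^G$; naturality in $M,N$ and compatibility with the Yoneda product when $M=N$ are automatic from the construction.

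Specializing \eqref{eq:resMN} to $M=N=k$ yields the first isomorphism in \eqref{eq:resk}. To identify $\Hbul(\Lambda(V),k)$ with $S^\bullet(V^*)$ as a $G$-equivariant graded superalgebra, I would write down the classical Koszul resolution of $k$ as a $\Lambda(V)$-supermodule, with terms $\Lambda(V)\otimes\mathrm{Sym}^n(V)$---where $\mathrm{Sym}^n(V)$ denotes the $n$th ordinary symmetric power of the underlying vector space of $V$, naturally sitting in $\Z_2$-parity $n\bmod 2$---and the standard Koszul contraction differentials (which increase the exterior degree by $1$ while decreasing the symmetric degree by $1$, hence are parity-preserving). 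Applying $\Hom_{\Lambda(V)}(-,k)$ collapses all differentials to zero, so $\Ext^n_{\Lambda(V)}(k,k)\cong\mathrm{Sym}^n(V^*)$, and a standard chain-level calculation identifies the resulting Yoneda product with the symmetric-algebra multiplication on $S^\bullet(V^*)$. Since the whole construction is functorial in $V$, the $G$-action is transported faithfully through the computation, so passing to $G$-invariants gives the final isomorphism.

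The main obstacle is the bookkeeping of $\Z_2$-parities in the Koszul step: ensuring that each differential is a genuinely even $\Lambda(V)$-supermodule map, and confirming that the resulting algebra $\Hbul(\Lambda(V),k)$ is an \emph{ordinary} polynomial ring on the generators $V^*$ rather than an exterior algebra. This reconciliation rests on the graded-commutativity formula in \cref{cor:gradedcommutativity}: elements of $V^*\subset\opH^1(\Lambda(V),k)$ carry both $\Z$-degree $1$ and $\Z_2$-parity $\one$, so $ab=(-1)^{1\cdot 1+1\cdot 1}ba=ba$ on these generators and they genuinely commute in the ordinary sense. Once this compatibility is pinned down, the identification with $S^\bullet(V^*)$ is natural and the $G$-action is manifestly the one induced by the defining representation on $V$.
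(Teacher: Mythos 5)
Your proof is correct, and it takes a genuinely different (more elementary) route to \eqref{eq:resMN} than the paper. The paper invokes the Lyndon--Hochschild--Serre spectral sequence for the normal Hopf sub-superalgebra $\Lambda(V) \subseteq \Lambda(V)\#kG$ with quotient $kG$, observes that semisimplicity of $kG$ kills $E_2^{i,j}$ for $i>0$, and reads off the collapsed spectral sequence (using the tensor--hom adjunction of \cref{tensorhomadjunction} to pass between $\Ext_A^\bullet(M,N)$ and $\Hbul(A,N\otimes M^*)$). You instead work directly with the derived functor definition: choose an injective resolution $N\to I^\bullet$ over $\Lambda(V)\#kG$, use \cref{L:selfinjective} twice (once for $\Lambda(V)\#kG$ to identify injectives with projectives, and implicitly once for $\Lambda(V)$ to go back from projectives to injectives after restriction) together with freeness of $\Lambda(V)\#kG$ over $\Lambda(V)$ to see that $I^\bullet$ remains an injective resolution of $N$ over $\Lambda(V)$, and then exploit the identity $\Hom_{\Lambda(V)\#kG}(M,-)=\Hom_{\Lambda(V)}(M,-)^G$ plus exactness of $(-)^G$ to commute taking cohomology with taking $G$-invariants. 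Both arguments use exactly the same two ingredients (Maschke and the Koszul calculation of $\Hbul(\Lambda(V),k)$), but yours avoids the spectral sequence and the detour through $\Hbul(A,N\otimes M^*)$, which makes the $G$-equivariance and compatibility with products more visibly built in; the paper's approach has the advantage of immediately suggesting what happens when $kG$ fails to be semisimple. Your explicit check via \cref{cor:gradedcommutativity} that the classes in $V^*\subset\opH^1(\Lambda(V),k)$ carry $(\Z,\Z_2)$-degree $(1,\one)$ and hence genuinely commute in the ordinary sense, so that $\Hbul(\Lambda(V),k)$ is an ordinary polynomial algebra rather than an exterior algebra, is a worthwhile point that the paper leaves implicit by simply citing Priddy.
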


\begin{proof}
Set $A = \Lambda(V) \# kG$, and write $\Lambda^+(V)$ for the augmentation ideal of $\Lambda(V)$. Then $\Lambda(V)$ is a Hopf sub-superalgebra of $A$, and $\Lambda^+(V)$ generates a two-sided Hopf ideal in $A$. Let $A//\Lambda(V)$ denote the quotient of $A$ by the two-sided ideal in $A$ generated by $\Lambda^+(V)$. Then $A//\Lambda(V) \cong kG$ as Hopf superalgebras, and there exists a Lyndon--Hochschild--Serre spectral sequence
\begin{equation}
E_2^{i,j}(M,N) = \opH^i(kG, \opH^j(\Lambda(V),N \otimes M^*)) \Rightarrow \opH^{i+j}(A,N \otimes M^*).
\end{equation}
In particular, $E_r(k,k)$ is a spectral sequence of (super)algebras and $E_r(M,N)$ is naturally a left $E_r(k,k)$-(super)module for each $r \geq 2$.

The group algebra $kG$ is semisimple by the assumption that $k$ is a field of characteristic $0$, so $E_2^{i,j} = 0$ for all $i > 0$. Then the spectral sequence collapses to the row $i=0$. This implies that restriction to $\Lambda(V)$ induces isomorphisms
\[
\Ext_A^\bullet(M,N) \cong \Hbul(A,N \otimes M^*) \cong \Hbul(\Lambda(V),N \otimes M^*)^G \cong \Ext_{\Lambda(V)}^\bullet(M,N)^G,
\]
where the first and last isomorphisms in this string are consequences of the tensor-hom adjunction in \cref{tensorhomadjunction}. In particular, restriction to $\Lambda(V)$ induces an algebra isomorphism
\[
\Hbul(\Lambda(V) \# kG,k) \cong \Hbul(\Lambda(V),k)^G.
\]
Finally, it is a classical result \cite[2.2(2)]{Priddy:1970} that $\Hbul(\Lambda(V),k)$ is isomorphic to the symmetric algebra $S^\bullet(V^*)$ with $V^*$ concentrated in cohomological degree $1$.
\end{proof}

\begin{theorem}
Let $G$ be a finite group and let $V$ be a finite-dimensional purely odd $kG$-module. Then there exist isomorphisms of varieties
\[
\abs{V \rtimes G} \cong \Max(S(V^*)^G) \cong V/G
\]
\end{theorem}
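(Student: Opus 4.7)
The plan is to reduce to invariant theory. The first isomorphism is an immediate consequence of \cref{cohomologychar0}, which identifies $\Hbul(V \rtimes G,k)$ with $S^\bullet(V^*)^G$ as graded superalgebras. Applying the $\Max$ functor yields $\abs{V \rtimes G} = \Max(\Hbul(V \rtimes G, k)) \cong \Max(S^\bullet(V^*)^G)$. The real content is showing that this second object coincides with the geometric quotient $V/G$, and for that I need to verify that the graded-super version of $\Max$ from \cref{cor:nilradical} is consistent with the classical one in this case.

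First I would check that $S^\bullet(V^*)$, interpreted as a graded-commutative super\-algebra with $V^*$ placed in cohomological degree $1$ and $\Z_2$-degree $\one$ (since $V$ is purely odd), is in fact an ordinary polynomial ring. For $a, b \in V^*$ the graded-commutativity relation gives $ab = (-1)^{\ol{a}\cdot\ol{b} + \deg(a)\cdot\deg(b)}ba = (-1)^{1+1}ba = ba$, so elements of $V^*$ commute in the ordinary sense, and the symmetric algebra $S^\bullet(V^*)$ coincides with the usual polynomial algebra $k[V]$. This property is preserved under taking $G$-invariants.

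Next I would verify that the super-theoretic $\Max$ agrees here with the ordinary one. Setting $A = S^\bullet(V^*)^G$, because the $\Z_2$-degree and cohomological degree of homogeneous elements of $A$ satisfy $\ol{a} \equiv \deg(a) \pmod{2}$, both of the subspaces $\Azero^{\odd}$ and $\Aone^{\ev}$ are zero. Hence in the notation of \cref{cor:nilradical} we have $R = A$, so $\Nil(A)$ reduces to the ordinary nilradical of $A$. Moreover $S^\bullet(V^*) = k[V]$ is an integral domain and $A = k[V]^G$ is a subring of it, so $A$ is already reduced. Therefore $\Max(A)$ is literally the classical maximal ideal spectrum of the finitely-generated commutative $k$-algebra $k[V]^G$.

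Finally, the identification $\Max(k[V]^G) \cong V/G$ is the classical result of invariant theory for linear actions of finite groups on affine space: $k[V]^G$ is a finitely-generated $k$-algebra by Hilbert--Noether, and its maximal spectrum is the categorical (and, here, geometric) quotient of $V$ by $G$. Since each of the three steps above is essentially an invocation of a result already at hand (either \cref{cohomologychar0}, \cref{cor:nilradical}, or classical invariant theory), I do not anticipate a genuine obstacle; the one point that requires care is simply confirming that the ostensibly exotic super conventions collapse to the familiar ones because $V$ is purely odd.
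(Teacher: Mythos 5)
Your proof is correct and follows essentially the same route as the paper, which simply cites \cref{cohomologychar0} together with Benson's result on invariant rings of finite groups (Benson, Proposition 5.4.8) to identify $\Max(S(V^*)^G)$ with $V/G$. The extra care you take in checking that the super-theoretic $\Max$ collapses to the ordinary one—because $V^*$ is purely odd and sits in cohomological degree $1$, so $\Azero^{\odd} = \Aone^{\ev} = 0$ and $R = A$ is an ordinary reduced commutative ring—is a worthwhile verification that the paper leaves implicit.
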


\begin{proof}
This is an immediate application of \cite[Proposition 5.4.8]{Benson:1998a}.
\end{proof}

Given a $kG$-module $V$, write $[v]$ for the $G$-orbit of an element $v \in V$.

\begin{theorem} \label{T:char0rankvariety}
Let $G$ be a finite group and let $V$ be a finite-dimensional purely odd $kG$-module. Let $M$ be a finite-dimensional $V \rtimes G$-supermodule. Then
\[
\abs{V \rtimes G}_M \cong \set{[v] \in V/G : M|_{\subgrp{v}} \text{ is not free}}
\]
\end{theorem}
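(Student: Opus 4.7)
The strategy is to reduce via the normal Hopf sub-superalgebra $\Lambda(V) \subseteq \Lambda(V) \# kG$ to a rank variety description for $\Lambda(V)$, and then descend along the $G$-action. By \cref{cohomologychar0}, restriction from $V \rtimes G$ to $\Lambda(V)$ identifies $\Hbul(V \rtimes G,k)$ with $\Hbul(\Lambda(V),k)^G \cong S^\bullet(V^*)^G$ and $\Ext_{V \rtimes G}^\bullet(M,M)$ with $\Ext_{\Lambda(V)}^\bullet(M,M)^G$. Because restriction is a ring homomorphism compatible with cup products, the map $\Phi_M$ for $V \rtimes G$ is simply the $G$-invariant restriction of $\Phi_M$ for $\Lambda(V)$, so $I_{V \rtimes G}(M) = I_{\Lambda(V)}(M) \cap S^\bullet(V^*)^G$. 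Since $S^\bullet(V^*)$ is integral over $S^\bullet(V^*)^G$ by classical invariant theory, the induced injection $\Hbul(V \rtimes G,k)/I_{V \rtimes G}(M) \hookrightarrow \Hbul(\Lambda(V),k)/I_{\Lambda(V)}(M)$ is a finite extension of rings. On maximal spectra this realizes a surjection $\abs{\Lambda(V)}_M \twoheadrightarrow \abs{V \rtimes G}_M$ obtained by restricting the quotient map $V \to V/G$.

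\textbf{Rank variety for $\Lambda(V)$.} The main step is to establish the $\Lambda(V)$-analogue
\[
\abs{\Lambda(V)}_M = \set{v \in V : M|_{\subgrp{v}} \text{ is not free}} \cup \set{0}.
\]
For any $v \in V$, the inclusion of Hopf super\-algebras $\subgrp{v} = \Lambda(kv) \hookrightarrow \Lambda(V)$ induces on cohomology the restriction map $S^\bullet(V^*) \to k[\xi]$ given by $f \mapsto f(v)\xi$, whose corresponding map on maximal spectra is the linear embedding $\mathbb{A}^1 \hookrightarrow V$, $t \mapsto tv$. By naturality of $\Phi_M$ under restriction to a Hopf subalgebra, the image of $\abs{\subgrp{v}}_M$ under this embedding lies in $\abs{\Lambda(V)}_M$, which yields the ``$\supseteq$'' inclusion since $\abs{\subgrp{v}}_M = \mathbb{A}^1$ precisely when $M|_{\subgrp{v}}$ is not free. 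The reverse inclusion, a Dade/Avrunin--Scott-style statement, is the main technical obstacle: to prove it, one fixes a splitting $V = kv \oplus W$ to obtain a tensor decomposition $\Lambda(V) \cong \Lambda(kv) \otimes \Lambda(W)$ of Hopf superalgebras and argues, via the associated Hochschild--Serre spectral sequence or a Carlson-module construction, that freeness of $M|_{\subgrp{v}}$ forces $v \notin \abs{\Lambda(V)}_M$.

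\textbf{Descent to orbits.} Finally, since $M$ is a $V \rtimes G$-supermodule, for each $g \in G$ the action of $g$ on $M$ exhibits an isomorphism of $\Lambda(V)$-super\-modules between $M$ with its original $\Lambda(V)$-structure and $M$ with the $g$-twisted structure, so $g \cdot \abs{\Lambda(V)}_M = \abs{\Lambda(V)}_M$ for every $g \in G$. Combining this $G$-invariance with the first paragraph, $\abs{V \rtimes G}_M$ is exactly the image in $V/G$ of $\abs{\Lambda(V)}_M$, which by the rank variety description is the set of orbits $\set{[v] \in V/G : M|_{\subgrp{v}} \text{ is not free}}$, as claimed.
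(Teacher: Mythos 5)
Your overall outline matches the paper's: establish the rank variety description for $\Lambda(V)$ and then descend along the $G$-action to orbits. The descent step is fine — your identification $I_{V \rtimes G}(M) = I_{\Lambda(V)}(M) \cap S(V^*)^G$ is equivalent to the paper's use of exactness of $(-)^G$ to get $I^G = \ker(\Phi^G)$, and passing to the quotient variety is the content of Benson's Proposition 5.4.8, which the paper cites; your integrality argument gives the surjection and the $G$-invariance you note pins down the fibers, so this part is sound.

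The genuine gap is in what you yourself flag as the main technical obstacle: the inclusion $\abs{\Lambda(V)}_M \subseteq \set{v : M|_{\subgrp{v}} \text{ not free}} \cup \set{0}$. You do not prove it; you offer a sketch (``fix a splitting $V = kv \oplus W$ and argue via Hochschild--Serre or a Carlson module''). That sketch is not a proof, and it is not obvious how to complete it along those lines: a Hochschild--Serre spectral sequence for $\Lambda(kv) \hookrightarrow \Lambda(V)$ does not directly tell you anything about whether a given maximal ideal of $S(V^*)$ containing $I_{\Lambda(V)}(M)$ must correspond to a point $v$ where $M|_{\subgrp{v}}$ fails to be free. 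The paper handles this by citing Aramova--Avramov--Herzog \cite[Theorem 3.9]{Aramova:2000}, which gives precisely the rank variety description, but for the \emph{relative} support variety $\abs{\Lambda(V)}_{(M,k)}$; the paper then uses \cref{standardproperties}(\ref{item:parity}) and \cref{relativeses}(\ref{item:relativesimple}), together with the fact that $k$ (up to parity shift) is the unique simple $\Lambda(V)$-supermodule, to identify $\abs{\Lambda(V)}_{(M,k)}$ with $\abs{\Lambda(V)}_M$. Your write-up is missing both the external input from \cite{Aramova:2000} and the non-trivial bookkeeping needed to pass from the relative to the absolute support variety. Until that step is supplied (either by the citation or by an actual proof), the argument has a hole at its center.
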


\begin{proof}
Set $E = \Lambda(V)$. Let $\Phi: \Hbul(E,k) \rightarrow \Ext_E^\bullet(M,M)$ be the natural ring homomorphism, and set $I = \ker(\Phi)$. Then $\abs{E}_M = \Max(\Hbul(E,k)/I)$. Identifying $\Hbul(E,k)$ with the symmetric algebra $S(V^*)$, it follows from the results in \cite[\S3]{Aramova:2000} that $\abs{E}_M \cong \set{v \in V : M|_{\subgrp{v}} \text{ is not free}}$. More precisely, let $I'$ be the annihilator ideal for the left Yoneda product action of $\Hbul(E,k) = \Ext_E^\bullet(k,k)$ on $\Ext_E^\bullet(M,k)$. Then \cite[Theorem 3.9]{Aramova:2000} asserts that
\[
\abs{E}_{(M,k)} = \Max(\Hbul(E,k)/I') \cong \set{v \in V : M|_{\subgrp{v}} \text{ is not free}}.
\]
Up to parity shift and isomorphism, the trivial module $k$ is the unique irreducible $E$-supermodule, so \cref{standardproperties,relativeses} imply that $\abs{E}_{(M,k)} = \abs{E}_M$.

Next, the map $\Phi$ is a $G$-module homomorphism. Since $G$ is a finite group and since $k$ is a field of characteristic zero, the fixed point functor $(-)^G$ is exact. Then one obtains an exact sequence
\[
0 \longrightarrow I^G \longrightarrow \Hbul(E,k)^G \stackrel{\Phi^G}{\longrightarrow} \im(\Phi_M)^G \longrightarrow 0,
\]
where $\Phi^G : \Hbul(E,k)^G \rightarrow \Ext_E^\bullet(M,M)^G$ is the map induced by $\Phi$. In particular, $I^G = \ker(\Phi^G)$, and $\Hbul(E,k)^G/I^G \cong (\Hbul(E,k)/I)^G$ as algebras. Now \cref{cohomologychar0} implies that
\[
\abs{V \rtimes G}_M \cong \Max\left( \Hbul(E,k)^G/\ker(\Phi^G) \right) \cong \Max\left( (\Hbul(E,k)/I)^G \right).
\]
Then by \cite[Proposition 5.4.8]{Benson:1998a}, the support variety $\abs{V \rtimes G}_M$ identifies with the quotient of $\abs{E}_M$ by the action of $G$, i.e., $\abs{V \rtimes G}_M \cong \set{[v] \in V/G : M|_{\subgrp{v}} \text{ is not free}}$.
\end{proof}

As a corollary of the ``rank variety'' description in \cref{T:char0rankvariety} we get the following tensor product property:

\begin{corollary} \label{tensorproducttheorem}
Let $G$ be a finite group and let $V$ be a finite-dimensional purely odd $kG$-module. Let $M$ and $N$ be finite-dimensional $V \rtimes G$-supermodules. Then
\[
\abs{V \rtimes G}_{M\otimes N} = \abs{V \rtimes G}_M \cap \abs{V \rtimes G}_N.
\]
\end{corollary}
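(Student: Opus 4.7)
The plan is to deduce the theorem from the rank variety description of \cref{T:char0rankvariety}. One containment is automatic: since $\Lambda(V) \# kG$ is cocommutative, \cref{standardproperties}(\ref{item:tensorintersection}) gives $\abs{V \rtimes G}_{M \otimes N} \subseteq \abs{V \rtimes G}_M \cap \abs{V \rtimes G}_N$. So I will concentrate on the reverse inclusion.

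Fix a representative $v \in V$ of a $G$-orbit, with $v \neq 0$. Since $V$ is purely odd, $v$ is an odd primitive element of $\Lambda(V) \# kG$ satisfying $v^2 = 0$, so the subalgebra $\subgrp{v}$ of $\Lambda(V)$ is isomorphic to the two-dimensional exterior superalgebra $\Lambda(kv)$. A finite-dimensional $\Lambda(kv)$-supermodule decomposes into a direct sum of trivial summands and free rank-one summands; equivalently, a finite-dimensional $\Lambda(kv)$-supermodule $L$ is free if and only if its Koszul-type cohomology
\[
H(L;v) := \ker\bigl( v \cdot {-} : L \to L \bigr) \,\big/\, \im\bigl( v \cdot {-} : L \to L \bigr)
\]
vanishes.

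Since $v$ is primitive with $\Delta(v) = v \otimes 1 + 1 \otimes v$, the sign convention of \cref{subsec:repcohomologysupport} makes $v$ act on $M \otimes N$ as the odd super-derivation
\[
v \cdot (m \otimes n) = (v \cdot m) \otimes n + (-1)^{\ol{m}}\, m \otimes (v \cdot n).
\]
Thus $(M \otimes N, v \cdot {-})$ is literally the tensor product of the $\Z_2$-graded complexes $(M, v \cdot {-})$ and $(N, v \cdot {-})$, and the super Künneth formula over the field $k$ supplies a natural isomorphism
\[
H(M \otimes N; v) \;\cong\; H(M;v) \otimes H(N;v).
\]
Consequently $H(M \otimes N;v) = 0$ if and only if $H(M;v) = 0$ or $H(N;v) = 0$, i.e., $M \otimes N$ is free over $\subgrp{v}$ if and only if $M$ is free over $\subgrp{v}$ or $N$ is free over $\subgrp{v}$.

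Applying \cref{T:char0rankvariety} then yields: $[v]$ lies in $\abs{V \rtimes G}_{M \otimes N}$ precisely when $M \otimes N$ is not free over $\subgrp{v}$, which happens precisely when both $M$ and $N$ fail to be free over $\subgrp{v}$, i.e., precisely when $[v] \in \abs{V \rtimes G}_M \cap \abs{V \rtimes G}_N$. This gives the reverse containment and completes the argument. The only step requiring care is the super Künneth isomorphism for a single odd differential; this is routine provided one tracks the super signs coming from the coproduct of $v$, and is the main (minor) obstacle.
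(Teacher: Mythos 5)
Your proposal is correct and follows the same overall strategy as the paper: reduce to the rank variety description via \cref{T:char0rankvariety}, dispatch the containment $\abs{V \rtimes G}_{M\otimes N} \subseteq \abs{V \rtimes G}_M \cap \abs{V \rtimes G}_N$ using cocommutativity and \cref{standardproperties}(\ref{item:tensorintersection}), and then analyze the restriction of $M\otimes N$ to the one-variable exterior algebra $\subgrp{v}$ for the reverse containment. The only genuine variation is the mechanism used in that last step. The paper notes that the indecomposable $\subgrp{v}$-supermodules are (up to parity shift) just the trivial module and its projective cover, so ``not free'' is equivalent to ``has a trivial direct summand,'' and then observes that tensoring a trivial summand of $M|_{\subgrp{v}}$ with one of $N|_{\subgrp{v}}$ produces a trivial summand of $(M\otimes N)|_{\subgrp{v}}$. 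You instead encode the same structure theory through the Koszul-style cohomology $H(L;v)=\ker(v)/\im(v)$ (so ``free'' means $H=0$) and invoke a super K\"unneth isomorphism $H(M\otimes N;v)\cong H(M;v)\otimes H(N;v)$, which is valid over a field once the differential on $M\otimes N$ is checked to match the tensor-product differential (you do this correctly, using that $v$ is primitive and odd). Both arguments rest on the same facts about $\Lambda(kv)$; the K\"unneth formulation is arguably tidier, since it directly yields the biconditional ``$M\otimes N$ is free over $\subgrp{v}$ iff $M$ or $N$ is,'' from which both inclusions follow at once without even needing \cref{standardproperties}.
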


\begin{proof}
We use the description of support varieties provided by \cref{T:char0rankvariety}. Since $\Lambda(V) \# kG$ is a cocommutative Hopf superalgebra, one has
\[
\abs{V \rtimes G}_{M\otimes N} \subseteq \abs{V \rtimes G}_M \cap \abs{V \rtimes G}_N
\]
by \cref{standardproperties}(\ref{item:tensorintersection}). Now let $[v] \in \abs{V \rtimes G}_M \cap \abs{V \rtimes G}_N$. Viewing $V$ as an abelian purely odd Lie superalgebra, the subalgebra $\subgrp{v}$ of $\Lambda(V) \# kG$ generated by $v$ is isomorphic to a one-variable exterior algebra, $\Lambda(v)$. Up to isomorphism and parity change, the only indecomposable $\Lambda(v)$-supermodules are the trivial module and its projective cover (see, e.g., \cite[Proposition 5.2.1]{Boe:2010}). Consequently, a $\subgrp{v}$-supermodule is not free if and only if it contains the trivial module as a direct summand. In particular, when written as a direct sum of indecomposable $\subgrp{v}$-supermodules, both $M$ and $N$ must have a trivial direct summand and hence so must $M \otimes N$. Thus, $M\otimes N$ is not free as a $\subgrp{v}$-supermodule and  $[v] \in \abs{V \rtimes G}_{M\otimes N}$. 
\end{proof}

\begin{theorem} \label{T:twodivisibility}
Let $G$ be a finite group and let $V$ be a finite-dimensional purely odd $kG$-module. Let $M$ be a finite-dimensional $V \rtimes G$-supermodule and let $d = \dim \abs{V \rtimes G} - \dim \abs{V \rtimes G}_M$ be the codimension of $\abs{V \rtimes G}_M$ in $\abs{V \rtimes G}$. Then $2^d \mid \dim_k M$. If $d >0$, then $\sdim(M) = 0$.
\end{theorem}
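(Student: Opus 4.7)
The plan is to use \cref{T:char0rankvariety} to rephrase the codimension hypothesis in terms of the rank variety sitting inside $V$, then to pick a $d$-dimensional subspace $W \subseteq V$ that meets this rank variety only at the origin. Restriction to $\Lambda(W)$ will then make $M$ free over $\Lambda(W)$, from which both divisibility assertions follow immediately.

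First, \cref{T:char0rankvariety} identifies $\abs{V \rtimes G}_M$ with the image of the closed conical subvariety $\abs{\Lambda(V)}_M := \set{v \in V : M|_{\subgrp{v}} \text{ is not free}}$ under the finite quotient map $V \twoheadrightarrow V/G \cong \abs{V \rtimes G}$. Because $G$ is finite this map preserves dimensions, so $\abs{\Lambda(V)}_M$ has codimension exactly $d$ in $V$. A standard dimension count then produces a linear subspace $W \subseteq V$ of dimension $d$ with $W \cap \abs{\Lambda(V)}_M = \set{0}$: passing to projective space, $\mathbb{P}(\abs{\Lambda(V)}_M) \subseteq \mathbb{P}(V)$ has dimension $\dim V - d - 1$, a generic $\mathbb{P}(W)$ has dimension $d - 1$, and these sum to $\dim V - 2 < \dim \mathbb{P}(V)$, so generically the two miss each other.

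Next, restrict $M$ to the sub-Hopf-superalgebra $\Lambda(W) \subseteq \Lambda(V)$. Applying \cref{T:char0rankvariety} with trivial group to the purely odd superspace $W$ gives a rank variety description of $\abs{\Lambda(W)}_M$ directly, so $\abs{\Lambda(W)}_M = \set{w \in W : M|_{\subgrp{w}} \text{ is not free}} = W \cap \abs{\Lambda(V)}_M = \set{0}$, where the middle equality uses that freeness of $M|_{\subgrp{w}}$ depends only on the element $w$ and not on the ambient exterior algebra. Then \cref{dimensioncomplexity}, applied to the finite-dimensional Hopf superalgebra $\Lambda(W)$, forces $M$ to be projective as a $\Lambda(W)$-supermodule; and since $\Lambda(W)$ is a local superalgebra, $M$ is in fact free over $\Lambda(W)$.

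A free $\Lambda(W)$-supermodule has $k$-dimension divisible by $\dim_k \Lambda(W) = 2^d$, which yields $2^d \mid \dim_k M$. When $d > 0$, the even and odd components of $\Lambda(W)$ each have $k$-dimension $2^{d-1}$, so $\sdim \Lambda(W) = 0$; this vanishing is preserved under any parity shift, so freeness of $M$ over $\Lambda(W)$ forces $\sdim(M) = 0$. The main technical obstacle is the transversality step that produces $W$: the Grassmannian dimension count above is routine over an algebraically closed field of characteristic zero but should be spelled out, and one must also confirm the identification $\abs{\Lambda(W)}_M = W \cap \abs{\Lambda(V)}_M$, which amounts to the naturality of the rank variety description under inclusions of exterior subalgebras.
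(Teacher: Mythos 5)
Your proof is correct and follows essentially the same strategy as the paper's: identify $\abs{V \rtimes G}_M$ with the $G$-quotient of the rank variety $\abs{\Lambda(V)}_M \subseteq V$, produce a $d$-dimensional linear subspace meeting that conical variety only at the origin, deduce projectivity (hence freeness) of $M$ over the corresponding exterior subalgebra via \cref{dimensioncomplexity}, and read off the divisibility and superdimension statements from the structure of $\Lambda(W)$. The one place you diverge is the transversality step: you produce $W$ by a general-position/Grassmannian dimension count in $\mathbb{P}(V)$, whereas the paper invokes the graded Noether Normalization Theorem applied to $S(V^*)/I_E(M)$ to exhibit a $d$-dimensional subspace $H$ with $H \cap \abs{E}_M = \{0\}$. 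These are equivalent devices over an algebraically closed field (the geometric general-position argument is the projective shadow of Noether Normalization for graded rings), and your observation that the rank-variety description is natural under inclusions $\Lambda(W) \subseteq \Lambda(V)$ is exactly the point the paper makes implicitly when passing from $H \cap \abs{E}_M = \{0\}$ to $\abs{\Lambda(H)}_M = \{0\}$.
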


\begin{proof}
Set $E = \Lambda(V)$. By the proof of \cref{T:char0rankvariety}, $\abs{V \rtimes G}_M$ identifies with the quotient of $\abs{E}_M$ by the action of $G$. Since $G$ is a finite group, then the codimension of $\abs{V \rtimes G}_M$ in $\abs{V \rtimes G}$ is equal to the codimension of $\abs{E}_M$ in $\abs{E}$. Next, $\abs{E}_M$ is defined by an ideal in the polynomial ring $\Hbul(E,k) \cong S(V^*)$, so it follows from the Noether Normalization Theorem \cite[Theorem II.3.1]{Kunz:2013} that there exists a $d$-dimensional subspace $H$ of $V$ such that $H \cap \abs{E}_M = \set{0}$. Considering the restriction of $M$ to the subalgebra $\Lambda(H)$ of $E$ generated by $H$, this implies that $\abs{\Lambda(H)}_M = \set{0}$, and hence implies by \cref{dimensioncomplexity} that $M$ is projective as a $\Lambda(H)$-supermodule. Since $\Lambda(H)$ is indecomposable over itself, $M$ is then isomorphic as a $\Lambda(H)$-supermodule to a direct sum of copies of $\Lambda(H)$, each of which is of dimension $2^d$ and (if $d > 0$) of superdimension $0$.
\end{proof}

Let $V \rtimes G$-mod denote the category of all finite-dimensional $V \rtimes G$-modules and let $\mathcal{K}$ denote the stable module category of $V \rtimes G$-mod.  Using \cref{L:selfinjective}, it follows that $\mathcal{K}$ is a tensor triangulated category. We expect that the spectrum of $\mathcal{K}$ (in the sense of Balmer \cite{Balmer:2005}) is homeomorphic to $V/G$ and that the thick tensor ideals of $\mathcal{K}$ are classified by the specialization closed subsets of $V/G$ via support varieties; see the results of Pevtsova and Witherspoon \cite{Pevtsova:2015} in a setting with close similarities to the one considered here.

\section{Finite-dimensional Lie superalgebras} \label{sec:fdLSAs}

Now let $k$ be an algebraically closed field of characteristic $p \geq 3$, and let $\g$ be a finite-dimensional Lie superalgebra over $k$. In this section we investigate support varieties for the (typically infinite-dimensional) Hopf superalgebra $U(\g)$, the universal enveloping superalgebra of $\g$.

\subsection{Background on Lie superalgebra cohomology} \label{subsec:Liebackground}

Write $\Lambda_s(\g^*)$ for the superexterior algebra on $\g^*$. We consider $\Lambda_s(\g^*)$ as a graded superalgebra with $\g^*$ as concentrated in $\Z$-degree $1$. Then $\Lambda_s(\g^*)$ identifies with the graded tensor product of algebras $\Lambda(\gzero^*) \gotimes S(\gone^*)$. Equivalently, $\Lambda_s(\g^*)$ is the free graded-commutative graded superalgebra generated by $\g^*$. Now let $M$ be a finite-dimensional $\g$-supermodule. There exists a differential $\partial$ on $C(\g,M) := M \otimes \Lambda_s(\g^*)$, called the Koszul differential, that makes $C(\g,M)$ into a cochain complex. The Koszul differential acts by derivations on $C(\g,M)$, i.e.,
\begin{align}
\partial(m \otimes z) &= \partial(m) z + m \otimes \partial(z) & \text{if $m \in M$ and $z \in \Lambda_s(\g^*)$, and} \\
\partial(ab) &= \partial(a) \cdot b + (-1)^i a \cdot \partial(b) & \text{if $a \in \Lambda_s^i(\g^*)$ and $b \in \Lambda_s(\g^*)$.}
\end{align}
Thus, $\partial$ is determined by its actions on $M$ and $\Lambda_s^1(\g^*) = \g^*$. The map $\partial: M \rightarrow M \otimes \Lambda^1(\g^*) = M \otimes \g^*$ satisfies $\partial(m) = \sum_i m_i \otimes f_i$, where the $m_i \in M$ and $f_i \in \g^*$ are such that $\sum_i f_i(z).m_i = (-1)^{\ol{z} \cdot \ol{m}} z.m$ for each $z \in \g$. Next, since $p \geq 3$ there exists a natural isomorphism $\Lambda_s^2(\g^*) \cong [\Lambda_s^2(\g)]^*$.\footnote{The graded dual of $\Lambda_s(\g)$ is isomorphic as a graded superalgebra to $\bsa(\g^*)$, the superalgebra of (super)alternating tensors on $\g^*$; see \cite[\S2.6]{Drupieski:2016}. The identification $\Lambda_s^1(\g^*) = \g^* = \bsa^1(\g^*)$ extends to a homomorphism of graded superalgebras $\psi: \Lambda_s(\g^*) \rightarrow \bsa(\g^*)$; in characteristic zero this is an isomorphism, but in characteristic $p \geq 3$ it need only be an isomorphism in $\Z$-degrees less than $p$. In particular, $\Lambda_s^2(\g^*) \cong \bsa^2(\g^*) \cong [\Lambda_s^2(\g)]^*$.} Then $\partial: \Lambda_s^1(\g^*) \rightarrow \Lambda_s^2(\g^*)$ identifies with the linear map $\g^* \rightarrow [\Lambda_s^2(\g)]^*$ that is the transpose of the Lie bracket $\Lambda_s^2(\g) \rightarrow \g$, $z_1 \wedge z_2 \mapsto [z_1,z_2]$. The cohomology of $C(\g,M)$ with respect to $\partial$ is the Lie superalgebra cohomology group $\Hbul(\g,M)$. For more details, see \cite[\S\S3.1--3.2]{Drupieski:2013}.

\begin{example} \label{abelianodd}
If $\g = \gone$, then $\g$ is abelian and $U(\g) = \Lambda(\g)$, so $\Hbul(\g,k) \cong S(\g^*)$ by \cite[2.2(2)]{Priddy:1970}.
\end{example}

\begin{example} \label{smallestcalculation}
Let $\g$ be the Lie superalgebra over $k$ generated by an odd element $y$ and an even element $x$ such that $y^2 := \frac{1}{2}[y,y] = x$. This implies that $[x,y] = 0$, and hence that $\set{x,y}$ is a homogeneous basis for $\g$. Let $\set{x^*,y^*}$ be the corresponding dual basis. Then in the notation of \cite[\S2.3]{Drupieski:2013}, a typical homogeneous monomial in $\Lambda(\g^*)$ has the form $\subgrp{(x^*)^a}(y^*)^b$ with $a \in \set{0,1}$ and $b \in \N$. Now the Koszul differential $\partial: \Lambda_s(\g^*) \rightarrow \Lambda_s(\g^*)$ satisfies $\partial(y^*) = 0$ and $\partial(\subgrp{x^*}) = -(y^*)^2$, so the cohomology ring $\Hbul(\g,k)$ is isomorphic to the truncated polynomial ring $k[y^*]/\subgrp{(y^*)^2}$.
\end{example}

\subsection{The cohomological spectrum}

Write $\gone^*[p]$ for the superspace $\gone^*$ considered as a graded superspace concentrated in $\Z$-degree $p$. Then $S(\gone^*[p])$ is a graded superalgebra concentrated in $\Z$-degrees divisible by $p$. Since $k$ is perfect, and since $S(\gone^*)$ is commutative in the ordinary sense, the $p$-power map, $z \mapsto z^p$, induces an injective homomorphism of graded superalgebras $S(\gone^*[p])^{(1)} \rightarrow S(\gone^*)$. Composing with the natural inclusion $S(\gone^*) \hookrightarrow \Lambda_s(\g^*)$, this produces an injective homomorphism of graded superalgebras
\begin{equation} \label{eq:ppowermap}
\wt{\varphi}: S(\gone^*[p])^{(1)} \rightarrow \Lambda_s(\g^*).
\end{equation}
Since $\Lambda_s(\g^*)$ is graded-commutative, and since the Koszul differential acts by derivations on $\Lambda_s(\g^*)$, it follows that the image of $\wt{\varphi}$ consists of cocycles in $C(\g,k)$, and hence that $\wt{\varphi}$ induces a graded superalgebra homomorphism
\begin{equation} \label{eq:varphi}
\varphi: S(\gone^*[p])^{(1)} \rightarrow \Hbul(\g,k).
\end{equation}

\begin{definition}
Given finite-dimensional $\g$-supermodules $M$ and $N$, let $I_\g(M,N)$ be the annihilator ideal for the cup product action of $\Hbul(\g,k)$ on $\Ext_\g^\bullet(M,N)$, and set $J_\g(M,N) = \varphi^{-1}(I_\g(M,N))$. Define $\Chi_\g(M,N)$ to be the subvariety of $\Max( S(\gone^*[p])^{(1)} )$ defined by $J_\g(M,N)$,
\[
\Chi_\g(M,N) = \Max \left( S(\gone^*[p])^{(1)}/J_\g(M,N) \right).
\]
Set $J_\g(M) = J_\g(M,M)$, and set $\Chi_\g(M) = \Chi_\g(M,M) = \Max ( S(\gone^*[p])^{(1)}/J_\g(M) )$.
\end{definition}

\begin{proposition} \label{ontopthpowers}
The induced map $\ol{\varphi}: S(\gone^*[p])^{(1)} \rightarrow \Hbul(\g,k)/\Nil(\Hbul(\g,k))$ is surjective onto all $p$-th powers.
\end{proposition}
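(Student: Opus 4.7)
The plan is to work at the cocycle level, using the Koszul complex $C(\g,k) = \Lambda_s(\g^*) = \Lambda(\gzero^*) \otimes S(\gone^*)$ from \cref{subsec:Liebackground} together with the structural description of the nilradical in \cref{cor:nilradical}.

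First I would reduce to the ``ordinarily commutative'' part of $\Hbul(\g,k)$. Set $R = \Hbul(\g,k)_{\zero}^\ev \oplus \Hbul(\g,k)_{\one}^\odd$ and $B = \Hbul(\g,k)_{\zero}^\odd \oplus \Hbul(\g,k)_{\one}^\ev$, so that $\Hbul(\g,k) = R \oplus B$ and, by \cref{cor:nilradical}, $B \subseteq \Nil(\Hbul(\g,k))$. Given any class $\alpha = \alpha_R + \alpha_B$, every summand in the non-commutative expansion of $(\alpha_R + \alpha_B)^p$ other than $\alpha_R^p$ contains a factor of $\alpha_B$ and therefore lies in the ideal $\Nil(\Hbul(\g,k))$. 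Hence $\alpha^p \equiv \alpha_R^p \pmod{\Nil(\Hbul(\g,k))}$, and it suffices to exhibit $\alpha_R^p$ as an element of $\im \varphi$.

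To do so, I would lift $\alpha_R$ to a cocycle $\tilde{\alpha}$ inside the graded subspace $R_C := \Lambda^{\ev}(\gzero^*) \otimes S(\gone^*)$ of $C$; a direct check from the graded-commutativity law shows that $R_C$ is commutative in the ordinary sense. Split $\tilde{\alpha} = \tilde{\alpha}_0 + \tilde{\alpha}_+$ with $\tilde{\alpha}_0 \in S(\gone^*)$ and $\tilde{\alpha}_+$ a sum of monomials each containing at least one $\gzero^*$-factor. Every such monomial $m$ satisfies $m^2 = 0$, because its square produces a repeated exterior factor in $\Lambda(\gzero^*)$; in particular $m^p = 0$ since $p \geq 2$. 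Applying the Frobenius identity $(a+b)^p = a^p + b^p$ in the commutative ring $R_C$ gives $\tilde{\alpha}^p = \tilde{\alpha}_0^p + \tilde{\alpha}_+^p$, and iterating it on $\tilde{\alpha}_+ = \sum m$ yields $\tilde{\alpha}_+^p = \sum m^p = 0$. Thus $\tilde{\alpha}^p = \tilde{\alpha}_0^p$, and since $\tilde{\varphi}$ is by definition the $p$-th power map on $\gone^*$ extended as an algebra homomorphism, $\tilde{\alpha}_0^p = \tilde{\varphi}(\tilde{\alpha}_0^{(1)})$, where $\tilde{\alpha}_0^{(1)}$ denotes $\tilde{\alpha}_0$ regarded as an element of $S(\gone^*[p])^{(1)}$ via the Frobenius twist. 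Passing to cohomology then gives $\alpha_R^p = \varphi(\tilde{\alpha}_0^{(1)}) \in \im \varphi$, completing the argument.

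The crux of the argument is the exterior identity $(x^*)^2 = 0$ in $\Lambda(\gzero^*)$, which forces any cocycle with nontrivial $\gzero^*$-content to have vanishing $p$-th power already at the cocycle level, so the only $p$-th powers that survive modulo the nilradical come from the pure $S(\gone^*)$-summand, which is exactly the image of $\tilde{\varphi}$. I foresee no serious obstacle; the main care needed is bookkeeping with the $\Z \times \Z_2$-grading and the Frobenius twist conventions.
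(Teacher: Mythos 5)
Your proposal is correct and follows essentially the same route as the paper's proof: reduce modulo the nilradical to the ordinarily commutative piece, pass to cocycles in $\Lambda^{\ev}(\gzero^*) \otimes S(\gone^*)$ inside the Koszul complex, apply Frobenius there, and observe that the positive-degree $\Lambda(\gzero^*)$-factors kill everything except the $S(\gone^*)$-part, which lands in $\im\wt{\varphi}$. The only cosmetic difference is that the paper decomposes $r$ into its bihomogeneous pieces $r_0^0 + r_1^1$ and treats each separately, whereas you handle $\alpha_R$ in one step; both are valid bookkeeping.
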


\begin{proof}
Let $r \in \Hbul(\g,k)$. We want to show that the coset $r^p + \Nil(\Hbul(\g,k))$ is in the image of $\ol{\varphi}$. Since we are working modulo the nilradical of $\Hbul(\g,k)$, we may assume by \cref{cor:nilradical} that $r = r_0^0 + r_1^1$, where $r_0^0 \in \opH^\ev(\g,k)_{\zero}$ and $r_1^1 \in \opH^\odd(\g,k)_{\one}$. Since $r_0^0$ and $r_1^1$ commute in the ordinary sense by the fact that $\Hbul(\g,k)$ is a graded-commutative superalgebra, one has $r^p = (r_0^0)^p + (r_1^1)^p$.

The ring $\Hbul(\g,k)$ inherits its algebra structure from $\Lambda_s(\g^*) \cong \Lambda(\gzero^*) \gotimes S(\gone^*)$. Then there exist homogeneous elements $a_i,c_i \in \Lambda(\gzero^*)$ and $b_i,d_i \in S(\gone^*)$ such that $\sum a_i \otimes b_i$ is a cocycle representative for $r_0^0$ and $\sum c_i \otimes d_i$ is a cocycle representative for $r_1^1$. Since $r_0^0 \in \opH^\ev(\g,k)_{\zero}$, we may assume for each $i$ that $b_i \in S^\ev(\gone^*) = S(\gone^*)_{\zero}$ and $a_i \in \Lambda^\ev(\gzero^*)$. Similarly, since $r_1^1 \in \opH^\odd(\g,k)_{\one}$, we may assume for each $i$ that $d_i \in S^\odd(\gone^*) = S(\gone^*)_{\one}$ and $c_i \in \Lambda^\ev(\gzero^*)$. These assumptions imply, by the graded-commutativity of $\Lambda_s(\g^*)$, that the summands in $\sum a_i \otimes b_i$ pairwise commute in $\Lambda_s(\g^*)$ in the ordinary sense and that $a_i$ and $b_i$ commute in $\Lambda_s(\g^*)$ in the ordinary sense. Similarly, the summands in $\sum c_i \otimes d_i$ pairwise commute, and $c_i$ commutes with $d_i$. Then
\begin{align*}
\left(\sum a_i \otimes b_i \right)^p &= \sum (a_i \otimes b_i)^p = \sum a_i^p \otimes b_i^p, \quad \text{and} \\
\left(\sum c_i \otimes d_i \right)^p &= \sum (c_i \otimes d_i)^p = \sum c_i^p \otimes d_i^p.
\end{align*}
Since $\Lambda(\gzero^*)$ is graded-commutative and is generated as an algebra by the subspace $\Lambda^1(\gzero^*)$, which is concentrated in $\Z$-degree $1$, it follows that every homogeneous element in $\Lambda(\gzero^*)$ of nonzero $\Z$-degree squares to $0$. This implies that the only nonzero summands appearing in $\sum a_i^p \otimes b_i^p$ and $\sum c_i^p \otimes d_i^p$ are those for which $a_i,c_i \in \Lambda^0(\gzero^*)$, i.e., those for which $a_i$ and $c_i$ are scalars. But if $a_i$ and $c_i$ are scalars, then $a_i^p \otimes b_i^p$ and $c_i^p \otimes d_i^p$ are in the image of $\wt{\varphi}$. Now since $\sum a_i^p \otimes b_i^p$ and $\sum c_i^p \otimes d_i^p$ are cocycle representatives for $(r_0^0)^p$ and $(r_1^1)^p$, respectively, this shows that the coset of $r^p = (r_0^0)^p + (r_1^1)^p$ is in the image of $\ol{\varphi}$.
\end{proof}

\begin{corollary} \label{cor:homeomorphism}
The map $\varphi: S(\gone^*[p])^{(1)} \rightarrow \Hbul(\g,k)$ induces for each pair of finite-dimensional $\g$-supermodules $M$ and $N$ a homeomorphism $\varphi_{M,N}^*: \abs{U(\g)}_{(M,N)} \simrightarrow \Chi_\g(M,N)$.
\end{corollary}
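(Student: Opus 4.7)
The plan is to realize $\varphi_{M,N}^*$ as the map on maximal spectra induced by the ring homomorphism
\[
\bar\varphi : R := S(\gone^*[p])^{(1)}/J_\g(M,N) \longrightarrow S := \Hbul(\g,k)/I_\g(M,N)
\]
obtained from $\varphi$ and the quotient $\Hbul(\g,k) \twoheadrightarrow S$, and then to argue that $\bar\varphi$ presents $S/\Nil(S)$ as a purely inseparable integral extension of $\bar\varphi(R/\Nil(R))$. Injectivity of $\bar\varphi$ is immediate from the definition $J_\g(M,N) = \varphi^{-1}(I_\g(M,N))$; the same definition forces injectivity to be preserved modulo nilpotents, since $x^n \in I_\g(M,N)$ pulls back to $x^n \in J_\g(M,N)$, so any $x \in R$ whose image in $S$ is nilpotent is already nilpotent in $R$.

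The main input is then \cref{ontopthpowers}, which I would use to show that every $\bar s \in S/\Nil(S)$ has $\bar s^p \in \bar\varphi(R/\Nil(R))$. Lifting $\bar s$ to $s \in \Hbul(\g,k)$, the proposition furnishes an $r \in S(\gone^*[p])^{(1)}$ with $s^p \equiv \varphi(r) \pmod{\Nil(\Hbul(\g,k))}$, and reducing this congruence first modulo $I_\g(M,N)$ and then modulo $\Nil(S)$ yields $\bar s^p = \bar\varphi(\bar r)$. Consequently every element of $S/\Nil(S)$ satisfies the monic polynomial $X^p - \bar\varphi(\bar r) = 0$ over $\bar\varphi(R/\Nil(R))$, so $S/\Nil(S)$ is a purely inseparable integral extension of $\bar\varphi(R/\Nil(R))$.

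From here the homeomorphism is a standard consequence of purely inseparable integrality. Lying-over gives surjectivity of $\Spec(S/\Nil(S)) \to \Spec(R/\Nil(R))$, integral ring extensions induce closed morphisms on $\Spec$, and maximal ideals correspond to maximal ideals because a domain that embeds integrally into a field is itself a field (applied to the residue rings). Injectivity on $\Spec$ is forced by the $p$-th power condition: if primes $\mathfrak{p}_1, \mathfrak{p}_2 \subseteq S/\Nil(S)$ contract to the same prime of $R/\Nil(R)$, then for $s \in \mathfrak{p}_1$ one has $s^p \in \bar\varphi(R/\Nil(R)) \cap \mathfrak{p}_1 \subseteq \mathfrak{p}_2$, whence $s \in \mathfrak{p}_2$, and symmetry gives $\mathfrak{p}_1 = \mathfrak{p}_2$. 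Restricting the resulting continuous closed bijection on $\Spec$ down to $\Max$ produces the homeomorphism $\varphi_{M,N}^*$. The subtle point to watch is that $\Hbul(\g,k)$, and hence $S$, is not a priori finitely generated as a $k$-algebra, so the argument must avoid invoking any finiteness of $S$ over $R$ and instead rely only on the purely inseparable integral extension framework, where the above spectral conclusions remain valid.
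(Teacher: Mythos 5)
Your proof is correct and follows the same strategy as the paper's: combine the injectivity of $\varphi_{M,N}$ with \cref{ontopthpowers} to get surjectivity onto $p$-th powers modulo nilradicals, then conclude the induced map on maximal spectra is a bijection and hence a homeomorphism. Your write-up is considerably more explicit than the paper's terse proof — in particular you carefully justify the passage to nilradicals and unpack why the bijection is a homeomorphism via the closedness of integral morphisms — but the underlying argument is the same.
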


\begin{proof}
The algebra homomorphism \eqref{eq:varphi} induces an injective algebra homomorphism
\[
\varphi_{M,N}: S(\gone^*[p])^{(1)}/J_\g(M,N) \rightarrow \Hbul(\g,k)/I_\g(M,N).
\]
\cref{ontopthpowers} implies that, modulo nilpotent elements, $\varphi_{M,N}$ is surjective onto $p$-th powers. Then the induced morphism between the maximal ideal spectra, $\varphi_{M,N}^*: \abs{U(\g)}_{(M,N)} \rightarrow \Chi_\g(M,N)$, is a bijection, hence a homeomorphism.
\end{proof}

Using \cref{cor:homeomorphism} we will restrict our attention to the varieties $\Chi_\g(M)$. Our first step will be to determine the variety $\Chi_\g(k)$. It identifies with the affine subvariety of $\gone$ defined by $\ker(\varphi)$.

\begin{theorem} \label{nonrestrictedspectrum}
Let $\g$ be a finite-dimensional Lie superalgebra over $k$. Then
\[
\Chi_\g(k) \cong \set{x \in \gone : [x,x] = 0}
\]
\end{theorem}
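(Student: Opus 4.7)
The plan is to identify $\Max(S(\gone^*[p])^{(1)})$ with $\gone$ via the natural character correspondence, under which the subvariety $\Chi_\g(k)$ is cut out by the ideal $\ker(\varphi)$, and then to verify the two containments with $\set{x\in\gone:[x,x]=0}$ separately.

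For the containment $\set{x:[x,x]=0}\subseteq\Chi_\g(k)$, I exploit the functoriality of the construction of $\varphi$. If $[x,x]=0$ then $\fs:=kx$ is an abelian purely odd sub-superalgebra of $\g$, and the inclusion $\fs\hookrightarrow\g$ produces a commutative square
\[
\begin{array}{ccc}
S(\gone^*[p])^{(1)} & \xrightarrow{\ \varphi_\g\ } & \Hbul(\g,k) \\
\downarrow & & \downarrow \\
S(\fs^*[p])^{(1)} & \xrightarrow{\ \varphi_\fs\ } & \Hbul(\fs,k),
\end{array}
\]
in which the left vertical arrow is restriction of linear functionals and the right vertical is the cohomology restriction map. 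By \cref{abelianodd}, $\Hbul(\fs,k)\cong S(\fs^*)$, and under this identification $\varphi_\fs$ is the injective $p$-th power Frobenius $f\mapsto f^p$. Any $F\in\ker(\varphi_\g)$ therefore restricts to an element of $\ker(\varphi_\fs)=\set{0}$, so $F$ vanishes on $\fs$ and in particular at $x$.

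For the reverse containment, given $x\in\gone$ with $[x,x]\ne 0$, I construct an explicit element of $\ker(\varphi)$ that does not vanish at $x$. Pick $\phi\in\gzero^*$ with $\phi([x,x])\ne 0$, and let $q=q_\phi\in S^2(\gone^*)$ be the quadratic form $q(v)=\phi([v,v])$, so $q(x)\ne 0$. Because $\phi$ annihilates $\gone$, the $(\gzero^*\otimes\gone^*)$-component of $\partial(\phi)$ is zero, and the Koszul differential decomposes as $\partial(\phi)=\alpha+c\,q$ for some $\alpha\in\Lambda^2(\gzero^*)$ and nonzero scalar $c$. Both $\partial\phi$ and $\alpha$ are even of $\Z$-degree $2$, hence commute in the ordinary sense, so the Frobenius identity $(a+b)^{p^N}=a^{p^N}+b^{p^N}$ (valid in characteristic $p$ for commuting elements) yields
\[
q^{p^N}=c^{-p^N}\bigl((\partial\phi)^{p^N}-\alpha^{p^N}\bigr)
\]
for every $N\ge 1$. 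A short induction from $\partial^2=0$ and the derivation rule gives $\partial\bigl(\phi\,(\partial\phi)^{p^N-1}\bigr)=(\partial\phi)^{p^N}$, so $(\partial\phi)^{p^N}$ is always a coboundary, while $\alpha^{p^N}\in\Lambda^{2p^N}(\gzero^*)$ vanishes as soon as $2p^N>\dim\gzero$. Fixing such an $N$, the cocycle $q^{p^N}$ is a coboundary and is also a $p$-th power in $S(\gone^*)$, hence lies in the image of $\wt\varphi$. Its unique preimage $F\in S(\gone^*[p])^{(1)}$ therefore belongs to $\ker(\varphi)$, and under the character of $S(\gone^*[p])^{(1)}$ attached to $x$ it evaluates to $q(x)^{p^N}\ne 0$; hence $x\notin\Chi_\g(k)$.

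The main obstacle is this second containment. The naive hope that $q^p$ itself is a coboundary fails in general precisely because of the $\Lambda^2(\gzero^*)$-contribution $\alpha$ to $\partial(\phi)$, and the essential idea is to pass to high enough Frobenius powers so that the finite dimensionality of $\gzero$ forces $\alpha^{p^N}$ to vanish while $(\partial\phi)^{p^N}$ remains visibly a coboundary. This keeps the entire computation elementary and avoids any spectral sequence machinery.
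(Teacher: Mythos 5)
Your proof is correct, and it follows essentially the same strategy as the paper's: both directions rest on the observation that the transpose-of-the-bracket cocycle $\partial(\phi)$ has its $S^2(\gone^*)$-component equal to (a nonzero multiple of) the quadratic form $v\mapsto\phi([v,v])$, together with the functoriality of $\varphi$ under restriction to an abelian purely odd sub-superalgebra. Your containment $\set{x:[x,x]=0}\subseteq\Chi_\g(k)$ via the injectivity of $\varphi_\fs$ is the same argument the paper gives, just stated in terms of the kernel of $\varphi$ rather than in terms of a closed embedding of varieties.

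The one place you diverge is in handling the $\Lambda^2(\gzero^*)$-contribution $\alpha$: you raise to a power $p^N$ large enough that $\alpha^{p^N}\in\Lambda^{2p^N}(\gzero^*)=0$ by dimension, whereas the paper works with the first Frobenius power and asserts directly that $\partial(\phi)^p\in S^{2p}(\gone^*)$. Your caution here is actually unnecessary, though harmless: in fact $\alpha^p=0$ already for \emph{any} $\alpha\in\Lambda^2(\gzero^*)$. Writing $\alpha=\sum_{i<j}c_{ij}f_if_j$ as a sum of pairwise ordinarily-commuting square-zero elements and expanding $\alpha^p$ by the multinomial theorem, each surviving term (with all multiplicities $\le1$) carries the coefficient $p!\equiv 0\pmod p$, so $\alpha^p=0$ regardless of $\dim\gzero$. (The paper's phrase ``the elements of $\Lambda^2(\gzero^*)$ square to zero'' is imprecise as written --- only the monomial generators do --- and what is really being used is this Frobenius-plus-nilpotence argument.) With that observation you could take $N=1$ throughout, which is what the paper does. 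Your higher-power workaround is a perfectly valid and arguably more robust substitute, since it requires nothing about the internal structure of $\Lambda(\gzero^*)$ beyond finite dimensionality.

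One last remark: the heuristic comment at the end of your writeup --- that ``the naive hope that $q^p$ is a coboundary fails in general'' --- is not correct, precisely because $\alpha^p=0$; $q^p$ is always a coboundary. But since the remark plays no logical role, the proof stands.
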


\begin{proof}
First let $a \in \g_{\ol{0}}^* \subseteq \Lambda_s^1(\g^*)$. Since the Koszul differential $\partial$ acts by derivations on $\Lambda_s(\g^*)$, one has $\partial(a \cdot \partial(a)^{p-1}) = \partial(a)^p$. Then $\partial(a)^p$ is a coboundary in $C(\g,k)$. Next, $\partial(a) \in \Lambda_s^2(\g^*)_{\zero} = \Lambda^2(\gzero^*) \oplus S^2(\gone^*)$. The elements of $\Lambda^2(\gzero^*)$ commute in the ordinary sense with the elements of $S^2(\gone^*)$. Then since the elements of $\Lambda^2(\gzero^*)$ square to zero, it follows that $\partial(a)^p \in S^{2p}(\gone^*)$. More precisely, it follows that $\partial(a)^p$ is an element in the image of the homomorphism $\wt{\varphi}: S(\gone^*[p])^{(1)} \rightarrow \Lambda_s(\g^*)$. Since $\wt{\varphi}$ is an injection, there exists a unique element $\phi_a \in S^2(\gone^*[p])^{(1)}$ such that $\wt{\varphi}(\phi_a) = \partial(a)^p$. Since $\partial(a)^p$ is a coboundary in $C(\g,k)$, this means that $\phi_a \in \ker(\wt{\varphi})$.

Now let $x \in \gone$. Considering $\phi_a$ and $\partial(a)^p \in S^{2p}(\gone^*)$ as polynomial functions on $\gone$, one has
\[
[\phi_a(x)]^p = [\partial(a)^p](x) = a([x,x])^p.
\]
Then $\phi_a(x) = 0$ if and only if $a([x,x]) = 0$. Varying $a$ over the elements of $\gzero^*$, it follows that $\Chi_\g(k) \subseteq \set{x \in \gone : [x,x] = 0}$. To prove the reverse equality, let $x \in \gone$ such that $[x,x] = 0$, and consider the abelian Lie sub-superalgebra $\fa$ of $\g$ spanned by $x$. Restriction from $\g$ to $\fa$ induces a natural morphism of varieties $\iota: \Chi_{\fa}(k) \rightarrow \Chi_\g(k)$. The restriction map $S(\gone^*[p])^{(1)} \rightarrow S(\fa_{\one}^*[p])^{(1)}$ is a surjection, so it follows that $\iota: \Chi_{\fa}(k) \rightarrow \Chi_\g(k)$ is a closed embedding. Since $\fa$ is a purely odd Lie superalgebra, the universal enveloping superalgebra of $\fa$ identifies with the exterior algebra $\Lambda(\fa)$. Then $\Hbul(\fa,k) = \Hbul(\Lambda(\fa),k) \cong S(\fa^*)$, so $\Chi_{\fa}(k) \cong \fa = \fa_1$. Now by the naturality of the morphism $\iota: \Chi_\fa(k) \rightarrow \Chi_\g(k)$, we conclude that $x \in \Chi_\g(k)$, and hence that $\Chi_\g(k) = \set{x \in \gone: [x,x] = 0}$.
\end{proof}

\subsection{Rank varieties}

We next show that support varieties in this setting admit a rank variety description.

\begin{theorem} \label{associatedvariety}
Let $\g$ be a finite-dimensional Lie superalgebra over $k$, and let $M$ be a finite-dimensional $\g$-supermodule. Then
\[
\Chi_\g(M) = \set{x \in \Chi_\g(k) : M|_{\subgrp{x}} \text{ is not free}} \cup \set{0}.
\]
\end{theorem}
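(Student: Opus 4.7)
I plan to prove the two containments separately, using the one-dimensional abelian purely odd Lie sub-superalgebra $\fa = \subgrp{x} = kx$ (well-defined because $[x,x]=0$) as a probe. For this $\fa$, $U(\fa) = \Lambda(x)$ is a $2$-dimensional exterior algebra and $\Hbul(\fa,k) \cong k[x^*]$ by \cref{abelianodd}. An elementary calculation via the standard projective resolution of $k$ over $\Lambda(x)$ shows that an $\fa$-supermodule $N$ is free iff $\Ext_\fa^{>0}(N,N)=0$ iff $x^* \in I_\fa(N)$; consequently, $\Chi_\fa(N) = kx$ if $N$ is not free and $\Chi_\fa(N) = \set{0}$ otherwise.

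The inclusion $\fa \hookrightarrow \g$ induces a restriction $\res \colon \Hbul(\g,k) \to \Hbul(\fa,k)$. Inspecting how $\varphi$ is built from the $p$-th power map on $\Lambda_s(\g^*)$, one obtains the compatibility
\[
\varphi_\fa \circ \iota^\# = \res \circ \varphi_\g,
\]
where $\iota^\# \colon S(\gone^*[p])^{(1)} \twoheadrightarrow S(\fa_\one^*[p])^{(1)}$ is induced by restriction of linear functionals. On spectra, $\iota^\#$ gives a closed embedding $\iota \colon kx \hookrightarrow \Chi_\g(k) \subseteq \gone$.

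For the containment ``$\supseteq$'', assume $M|_\fa$ is not free. Functoriality of $\Ext$ gives $\res(I_\g(M)) \subseteq I_\fa(M|_\fa)$, and so the compatibility square yields $\iota^\#(J_\g(M)) \subseteq J_\fa(M|_\fa)$, hence $\iota(\Chi_\fa(M|_\fa)) \subseteq \Chi_\g(M)$ after passing to spectra. Since $\Chi_\fa(M|_\fa) = kx$ by the preliminary computation, this forces $x \in \Chi_\g(M)$.

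The containment ``$\subseteq$'' is where I expect the main obstacle. Given $0 \neq x \in \Chi_\g(M)$, suppose contrapositively that $M|_\fa$ is free; the task is to produce $f \in J_\g(M)$ with $f(x) \neq 0$, contradicting $x \in \Chi_\g(M)$. Choosing $\phi \in \gone^*$ with $\phi(x) \neq 0$ and $\phi|_\fa$ a generator of $\fa_\one^*$, the compatibility square gives $\res(\varphi_\g(\phi)) = (x^*)^p$, which annihilates $\Ext_\fa^\bullet(M|_\fa, M|_\fa)$ by freeness; the crux is to promote this local vanishing on $\fa$ to the global statement $\varphi_\g(\phi) \in I_\g(M)$. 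I expect this step to require either a Carlson-style $\g$-supermodule $L_\phi$ whose support variety lies in the hyperplane $\set{\phi=0} \cap \Chi_\g(k)$, together with a stable splitting exhibiting $M$ as a summand of $M \otimes L_\phi$ whenever $M|_\fa$ is free; or a direct cochain-level construction on the Koszul complex $C(\g, \Hom_k(M,M))$ yielding an explicit nullhomotopy for cup product by $\varphi_\g(\phi)$. The complication absent in the classical (restricted) case is that $U(\g)$ is typically infinite-dimensional and $\Hbul(\g,k)$ need not be finitely generated, so care must be taken to stay inside the image of $\varphi$ and to control nilpotent error terms coming from $\Azero^\odd \oplus \Aone^\ev$ in the graded-commutative cohomology ring.
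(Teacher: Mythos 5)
Your easy containment ``$\supseteq$'' is correct and matches the paper: you use the closed embedding $\Chi_{\subgrp{x}}(M|_{\subgrp{x}}) \hookrightarrow \Chi_\g(M)$ induced by restriction, together with the elementary classification of $\Lambda(x)$-supermodules, just as the paper cites \cite[\S3]{Aramova:2000} and the naturality argument from the proof of \cref{nonrestrictedspectrum}.

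The hard containment ``$\subseteq$'', however, is not actually proved in your proposal; you correctly identify it as the crux, but neither of the two strategies you sketch is carried out, and it is doubtful either would succeed as stated. A Carlson-style $L_\phi$ device requires finitely generated cohomology and a tensor-product theorem, neither of which is available for the (typically infinite-dimensional) $U(\g)$. And your plan to show that a \emph{linear} form $\phi \in \gone^*$ with $\phi(x) \neq 0$ lies in $J_\g(M)$ is asking for too much: the local vanishing $\res(\varphi_\g(\phi)) \in I_\fa(M|_\fa)$ on the single line $\fa = kx$ carries no information about the action of $\varphi_\g(\phi)$ on $\Ext_\g^\bullet(M,M)$ for the whole of $\g$, and in fact for a nonzero supermodule of zero superdimension no linear form is expected to annihilate $\Ext_\g^\bullet(M,M)$.

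The paper's route is different in structure. It first reduces to the case $\sdim M = 0$ (so $M \cong k^{m|m}$), and then, via a faithfulness trick (adding a suitable coinduced module $P$ with $\Chi_\g(P) = \set{0}$), to the case where $\g$ is a Lie sub-superalgebra of $\gl(M) = \glmm$; by the closed-embedding argument one may then assume $\g = \glmm$. In that case it conjugates $x$ into the normal form $y$ with upper-right block the identity, and exhibits an explicit polynomial $f_1 \in S^m(\glone^*)$ --- a super-determinant over the symmetric group $\fS_m$ --- with $f_1(y) \neq 0$, and shows $f_1 \in J_\g(k^{m|m})$ by an explicit coboundary computation: one checks $\partial(f_1) = \str \cdot f_1$ and $\partial(f_2) = -\str \otimes f_1 + (-1)^{m-1} f_2 \cdot \str$ in the Koszul complex $C(\g,\g^*)$, whence $\partial(-f_2 \cdot f_1^{p-1}) = \str \otimes f_1^p$. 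This is the analogue of Jantzen's characteristic-$2$ calculation \cite[\S3]{Jantzen:1986}, and it is precisely the ingredient missing from your sketch. If you want to salvage your plan, you should replace the linear form $\phi$ by a degree-$m$ determinant-like polynomial in $\gone^*$ that detects the one chosen $x$ and yet lies in $J_\g(M)$ for an essentially structural reason --- and the way the paper finds such a polynomial is by reducing to $\gl(M)$ and exploiting the supertrace isomorphism $\g \cong \g^*$.
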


\begin{proof}
Our argument is exactly parallel to that used by Jantzen \cite[\S3]{Jantzen:1986} to calculate the support varieties of finite-dimensional restricted Lie algebras in characteristic $2$.

Write $\Chi_\g'(M)$ for the set on the right-hand side of the equality stated in the theorem. Then we must show that $\Chi_\g(M) = \Chi_\g'(M)$. Given $x \in \Chi_\g(k)$, write $\subgrp{x}$ of the Lie sub-superalgebra of $\g$ generated by $x$. Since $x$ is a self-commuting odd element, the enveloping superalgebra $U(\subgrp{x})$ is a one-variable exterior algebra. To prove the inclusion $\Chi_\g'(M) \subseteq \Chi_\g(M)$, observe that if $x \in \Chi_\g(k)$ and if $M|_{\subgrp{x}}$ is not free, then $\Chi_{\subgrp{x}}(M|_{\subgrp{x}}) \cong \subgrp{x}$ by the results in \cite[\S3]{Aramova:2000}. This implies as in the proof of \cref{nonrestrictedspectrum} that $\subgrp{x} \subseteq \Chi_\g(M)$, and hence that $\Chi'_\g(M) \subseteq \Chi_\g(M)$. For the reverse inclusion $\Chi_\g(M) \subseteq \Chi'_\g(M)$, observe that $\Chi_\g'(M) = \Chi_\g(k)$ unless the superdimension of $M$ is zero, i.e., unless $M$ is isomorphic as a superspace to $k^{m|m}$ for some $m$. So let us assume that $M \cong k^{m|m}$.

First suppose that $\g = \gl(M)$. Fixing a homogeneous basis for $M$, $\g$ identifies with the general linear Lie superalgebra $\glmm$. Now let $x \in \Chi_\g(M)$ be nonzero, and suppose to the contrary that $M|_{\subgrp{x}}$ is free. Using this fact and choosing a new homogeneous basis for $M$ if necessary, we may write $x$ in the form shown in Figure~\ref{fig:xmatrix},
\begin{figure}[tbp]
\scalebox{0.8}{
$
\left[
\begin{array}{cccccc|cccccc}
	&	&	&	&	&	& 1	&	&	&	&	&	\\
	&	&	&	&	&	&	& \ddots	&	&	&	& \\
	&	&	&	&	&	&	&	& 1	&	&	& 	\\
	&	&	&	&	&	&	&	&	& 0	&	& 	\\
	&	&	&	&	&	&	&	&	&	& \ddots	& 	\\	
	&	&	&	&	&	&	&	&	&	&	& 0	\\
\hline
0	&	&	&	&	&	&	&	&	&	&	&	\\
	& \ddots	&	&	&	&	&	&	&	&	&	&	\\
	&	& 0	&	&	&	&	&	&	&	&	&	\\
	&	&	& 1	&	&	&	&	&	&	&	&	\\
	&	&	&	& \ddots	&	&	&	&	&	&	&	\\
	&	&	&	&	& 1	&	&	&	&	&	&
\end{array}
\right]
$
}
\caption{The matrix $x$ in the proof of \cref{associatedvariety}.} \label{fig:xmatrix}
\end{figure}
where, say, the first $n$ diagonal entries of the upper-right $m \times m$ block are equal to $1$, the last $m-n$ diagonal entries of the lower-left $m \times m$ block are equal to $1$, and the rest of the entries in $x$ are $0$. Let $\set{e_1,\ldots,e_m,e_{m+1},\ldots,e_{2m}}$ be the ``standard'' homogeneous basis for $k^{m|m}$ with respect to which the matrix for $x$ has been written (so $e_1,\ldots,e_m$ are even basis vectors and $e_{m+1},\ldots,e_{2m}$ are odd basis vectors), and let $\phi: k^{m|m} \rightarrow k^{m|m}$ be the linear map that interchanges $e_i$ and $e_{m+i}$ for $n+1 \leq i \leq m$ and that leaves the remaining basis vectors fixed. Then $\phi \circ \phi = 1$, and conjugation by $\phi$ defines a (non-homogeneous) automorphism of $\g$. This automorphism induces a (non-homogeneous) automorphism of $\Hbul(\g,k)$, which in turn induces an isomorphism of varieties $\Chi_\g(k^{m|m}) \cong \Chi_\g(k^{m|m})$ that sends $x$ to the odd matrix $y$, illustrated in Figure~\ref{fig:matrixy},
\begin{figure}[tbp]
\scalebox{0.8}{
$
\left[ \begin{array}{ccc|ccc}
	&	&	& 1	&	&	\\
	&	&	&	& \ddots	&	\\
	&	&	&	&	& 1	\\
\hline
\phantom{1}	&	&	&	&	&	\\
	& \phantom{\ddots}	&	&	&	&	\\
	&	& \phantom{1}	&	&	&
\end{array}
\right]
$
}
\caption{The matrix $y$ in the proof of \cref{associatedvariety}.} \label{fig:matrixy}
\end{figure}
whose upper-right block is the $m \times m$ identity matrix and whose remaining entries are $0$. To contradict the initial assumption that $M|_{\subgrp{x}}$ is free, we will show that $y \notin \Chi_\g(k^{m|m})$.

Let $\set{e_{ij}: 1 \leq i,j \leq 2m}$ be the homogeneous basis for $\glmm$ consisting of the usual matrix units (i.e., $e_{ij} \in \glmm$ has a $1$ in the $ij$-position and $0$s elsewhere), and let $\set{X_{ij}: 1 \leq i,j \leq 2m}$ be the corresponding dual basis. Write $\fS_m$ for the symmetric group on $m$ letters, and set
\[
f_1 = \sum_{\sigma \in \fS_m} \sgn(\sigma) \cdot X_{\sigma(1),m+1} X_{\sigma(2),m+2} \cdots X_{\sigma(m),2m} \in S(\gone^*).
\]
Considered as a polynomial function on $\gone^*$, $f_1(y) = 1 \neq 0$. We will show that $y \notin \Chi_\g(k^{m|m})$ by showing that $f_1 \in J_\g(k^{m|m})$.

By definition, $J_\g(k^{m|m})$ is the kernel of the composite ring homomorphism
\[
\varphi: S(\gone^*[p])^{(1)} \stackrel{\varphi}{\longrightarrow} \Hbul(\g,k) \stackrel{\Phi}{\longrightarrow} \Ext_\g^\bullet(k^{m|m},k^{m|m}).
\]
It follows from \cref{tensorhomadjunction} that $\Phi: \Hbul(\g,k) \rightarrow \Ext_\g^\bullet(k^{m|m},k^{m|m})$ may be rewritten as the ring homomorphism $\Hbul(\g,k) \rightarrow \Ext_\g^\bullet(k,\g)$ induced by the $\g$-supermodule homomorphism $k \rightarrow \g$ that sends a scalar $\lambda \in k$ to the corresponding scalar multiple of the identity matrix in $\g$. Next, the adjoint and coadjoint representations of $\g$ are isomorphic, i.e., $\g \cong \g^*$ as $\g$-supermodules. To see this, set $V = k^{m|m}$. Then $\g \cong V \otimes V^*$ and $V \cong (V^*)^*$ as $\g$-supermodules, so by \cref{tensorproductdual},
\[
\g \cong V \otimes V^* \cong (V^*)^* \otimes V^* \cong (V \otimes V^*)^* \cong \g^*
\]
as $\g$-supermodules. Under the isomorphism $\g \cong \g^*$, the identity matrix is sent to the supertrace
\[ \textstyle
\str := \left(\sum_{r=1}^m X_{r,r} \right) - \left(\sum_{r=1}^m X_{r+m,r+m} \right) \in \g^*.
\]
So now we consider the map $\Hbul(\g,k) \rightarrow \Hbul(\g,\g^*)$ that is induced by the homomorphism $k \rightarrow \g^*$ that sends a scalar $\lambda \in k$ to the corresponding scalar multiple of the supertrace. As discussed in Section \ref{subsec:Liebackground}, the cohomology group $\Hbul(\g,\g^*)$ can be computed as the cohomology of the cochain complex $C(\g,\g^*) = \g^* \otimes \Lambda(\g^*)$ with respect to the Koszul differential $\partial$. We will be able to conclude that $f_1 \in J_\g(k^{m|m})$ provided we can show that $\str \otimes (f_1)^p$ is a coboundary in $C^{mp}(\g,\g^*)$.

We view the Koszul differentials $\partial_{\g^*} : C^0(\g,\g^*) \rightarrow C^1(\g,\g^*)$ and $\partial: C^1(\g,k) \rightarrow C^2(\g,k)$ as linear maps $\g^* \rightarrow \g^* \otimes \g^*$ and $\g^* \rightarrow \Lambda_s^2(\g^*)$, respectively. Then in terms of the basis $\set{X_{ij}: 1 \leq i,j \leq 2m}$ for $\g^*$, the Koszul differentials are given by
\begin{align*}
\partial_{\g^*}(X_{ij}) &= \sum_{r=1}^{2m} X_{rj} \otimes X_{ir} - (-1)^{\ol{X_{ir}} \cdot \ol{X_{rj}}} X_{ir} \otimes X_{rj}, \quad \text{and} \\
\partial(X_{ij}) &= \sum_{r=1}^{2m} (-1)^{\ol{X_{ir}} \cdot \ol{X_{rj}}} X_{ir}X_{rj}.
\end{align*}
Now a tedious but straightforward calculation shows that $\partial(f_1) = \str \cdot f_1$; cf.\ \cite[3.7]{Jantzen:1986}. %See my scanned notes from July 7, 2015.
This implies that $\partial((f_1)^{p-1}) = (p-1) \cdot \str \cdot (f_1)^{p-1}$. Now set
\[
f_2 = \sum_{i=1}^m \sum_{\sigma \in \fS_m} \sgn(\sigma) X_{\sigma(i),m+i} \otimes \left( \prod_{\ell \neq i} X_{\sigma(\ell),m+\ell} \right) \in \g^* \otimes \Lambda^{m-1}(\g^*) = C^{m-1}(\g,\g^*),
\]
and note that $C(\g,\g^*) = \g^* \otimes \Lambda(\g^*)$ is naturally a right $\Lambda(\g^*)$-supermodule. Then another tedious but straightforward calculation shows that $\partial(f_2) = -\str \otimes f_1 + (-1)^{m-1} f_2 \cdot (\str)$; cf.\ \cite[3.8]{Jantzen:1986}. %See my scanned notes July 7, 2015.
Finally, this implies that $\partial(- f_2 \cdot (f_1)^{p-1}) = \str \otimes (f_1)^p$, which is what we wanted to show.

For the general case of the inclusion $\Chi_\g(M) \subseteq \Chi_\g'(M)$, first suppose that $\g$ is a Lie sub-super\-algebra of $\gl(M)$. Then as discussed in the proof of \cref{nonrestrictedspectrum}, the inclusion $\g \hookrightarrow \gl(M)$ induces a closed embedding $\Chi_\g(k) \hookrightarrow \Chi_{\gl(M)}(k)$, which the reader can check restricts to a closed embedding $\Chi_\g(M) \hookrightarrow \Chi_{\gl(M)}(M)$. From the special case treated in the previous three paragraphs, we know that $\Chi_{\gl(M)}(M) = \Chi_{\gl(M)}'(M)$. This then implies that $\Chi_\g(M) \subseteq \Chi_\g'(M)$.

For the last step of the proof, choose some faithful finite-dimensional $\gzero$-module $P'$ (this is possible by Iwasawa's Theorem), and set $P = U(\g) \otimes_{U(\gzero)} P'$. As a superspace, $P \cong \Lambda(\gone) \otimes P'$. Then $P$ is a faithful finite-dimensional $\g$-module whose superdimension is $0$. By Frobenius reciprocity it follows that $\Ext_\g^\bullet(P,P) \cong \Ext_{\gzero}^\bullet(P',P)$. Since $P'$ and $P$ are finite-dimensional $\gzero$-modules, and since $\gzero$ is a finite-dimensional ordinary Lie algebra, $\Ext_{\gzero}^\bullet(P',P)$ is finite-dimensional. Then $\Ext_\g^\bullet(P,P)$ is finite-dimensional, which implies that the ideal $J_\g(P)$ defining $\Chi_\g(P)$ contains all sufficiently large powers of all non-constant poly\-nomials in $S(\gone^*[p])^{(1)}$. So $\Chi_\g(P) = \set{0}$, hence $\Chi_\g(M) = \Chi_\g(M \oplus P)$ by \cref{standardproperties}(\ref{item:directsum}). Finally, since $M \oplus P$ is a faithful finite-dimensional $\g$-supermodule, $\g$ identifies with a Lie sub-superalgebra of $\gl(M \oplus P)$. Then from the previous paragraph we conclude that $\Chi_\g(M) \subseteq \Chi_\g'(M)$.
\end{proof}

It is noteworthy that the associated varieties introduced by Duflo and Serganova in characteristic zero \cite{Duflo:2005} have precisely the same definition\footnote{The definition given in \cite{Duflo:2005} is in terms of the nonvanishing of a certain quotient vector space, but it can be verified that this is equivalent to the projectivity condition used here.} as the positive characteristic rank variety given in \cref{associatedvariety}. However, unlike here, those varieties have no known cohomological definition. Duflo and Serganova prove that their varieties relate to the combinatorics of atypicality and that their so-called fibre functors provide interesting tensor functors between Lie superalgebras of various ranks.  It would be interesting to adapt their techniques to positive characteristic.

\section{Restricted Lie superalgebras and infinitesimal supergroup schemes}\label{S:restrictedLSA}

As in Section \ref{sec:fdLSAs}, let $k$ be an algebraically closed field of characteristic $p \geq 3$. In this section we present some partial results describing the cohomology variety $\abs{G}$ of an infinitesimal supergroup scheme $G$. We obtain the sharpest results when $G$ is the first Frobenius kernel of the general linear supergroup scheme $GL(m|n)$, or equivalently, when $G$ is the restricted Lie superalgebra $\glmn$. We begin in Section \ref{subsec:CFG} by summarizing some of the main points in the first author's proof \cite{Drupieski:2013,Drupieski:2016} that the cohomology ring $\Hbul(G,k)$ of an infinitesimal supergroup scheme $G$ is a finitely-generated superalgebra. In Section \ref{subsec:resolution} we recall the details of a projective resolution $(X(\g),d_t)$ constructed by Iwai and Shimada \cite{Iwai:1965} and May \cite{May:1966}. In Section \ref{subsec:spectrum} we apply the finite-generation results and explicit calculations using the resolution $(X(\g),d_t)$ to investigate the spectrum of $\Hbul(G,k)$.

\subsection{Cohomological finite generation} \label{subsec:CFG}

Let $\bsp$ be the category of strict polynomial superfunctors as defined in \cite[\S2.1]{Drupieski:2016} (for the original definition, see \cite{Axtell:2013}). In \cite{Drupieski:2016}, the first author calculated for each $r \geq 1$ the structure of $\Ext_{\bsp}^\bullet(\bsir,\bsir)$, the extension algebra in $\bsp$ of the $r$-th Frobenius twist of the identity functor. The functor $\bsir$ admits a direct sum decomposition, $\bsir = \bsirzero \oplus \bsirone$, which gives rise to a matrix ring decomposition
\begin{equation*}% \label{eq:matrixring}
\renewcommand*{\arraystretch}{1.5}
\Ext_{\bsp}^\bullet(\bsir,\bsir) =
\begin{pmatrix}
\Ext_{\bsp}^\bullet(\bsi_0^{(r)},\bsi_0^{(r)}) & \Ext_{\bsp}^\bullet(\bsi_1^{(r)},\bsi_0^{(r)}) \\
\Ext_{\bsp}^\bullet(\bsi_0^{(r)},\bsi_1^{(r)}) & \Ext_{\bsp}^\bullet(\bsi_1^{(r)},\bsi_1^{(r)})
\end{pmatrix}.
\end{equation*}
As an algebra, $\Ext_{\bsp}^\bullet(\bsir,\bsir)$ is then generated by certain distinguished extension classes
\begin{equation} \label{eq:Extgenerators}
\left.
\begin{aligned}
\bse_i' &\in \Ext_{\bsp}^{2p^{i-1}}(\bsi_0^{(r)},\bsi_0^{(r)}) \\
\bse_i'' &\in \Ext_{\bsp}^{2p^{i-1}}(\bsi_1^{(r)},\bsi_1^{(r)})
\end{aligned}
\right\} \text{ for $1 \leq i \leq r$, and }
\left\{
\begin{aligned}
\bsc_r &\in \Ext_{\bsp}^{p^r}(\bsi_1^{(r)},\bsi_0^{(r)}), \\
\bsc_r^\Pi &\in \Ext_{\bsp}^{p^r}(\bsi_0^{(r)},\bsi_1^{(r)}).
\end{aligned}
\right.
\end{equation}
The parity change functor $\Pi$ induces an involutory superalgebra automorphism on $\Ext_{\bsp}^\bullet(\bsir,\bsir)$, denoted $z \mapsto z^\Pi$, that maps $\bsc_r$ to $\bsc_r^\Pi$ and each $\bse_i'$ to $\bse_i''$ (and vice versa).

\begin{theorem} \label{Extalgebrarelations}
The extension algebra $\Ext_{\bsp}^\bullet(\bsir,\bsir)$ is generated by the distinguished extension classes \eqref{eq:Extgenerators} subject only to the relations imposed by the matrix ring decomposition and
	\begin{enumerate}
	\item $(\bse_r')^p = \bsc_r \circ \bsc_r^\Pi$ and $(\bse_r'')^p = \bsc_r^\Pi \circ \bsc_r$.

	\item For each $1 \leq i < r$, $(\bse_i')^p = (\bse_i'')^p = 0$.

	\item For each $1 \leq i < r$, $\bse_i' \circ \bsc_r = \lambda_i (\bsc_r \circ \bse_i'')$ and $\bse_i'' \circ \bsc_r^\Pi = \lambda_i (\bsc_r^\Pi \circ \bse_i')$ for some $\lambda_i \in \set{\pm 1}$. \label{item:lambdas}

	\item The subalgebra generated by $\bse_1',\ldots,\bse_r',\bse_1'',\ldots,\bse_r''$ is commutative.

	\item $\bse_r' \circ \bsc_r = \bsc_r \circ \bse_r''$ and $\bse_r'' \circ \bsc_r^\Pi = \bsc_r^\Pi \circ \bse_r'$. \label{item:ercr}
	\end{enumerate}
\end{theorem}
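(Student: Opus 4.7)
The plan is to establish the theorem in three stages: first decompose the extension algebra via parity, then compute each summand, and finally verify the multiplicative relations on the nose using explicit resolutions. The matrix ring decomposition itself is immediate from the direct sum decomposition $\bsir = \bsirzero \oplus \bsirone$ in $\bsp$, since any $\Ext$ functor is additive in both variables. So the real content is to identify each of the four corners and to pin down how they fit together multiplicatively.

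For the two diagonal corners $\Ext_{\bsp}^\bullet(\bsirzero, \bsirzero)$ and $\Ext_{\bsp}^\bullet(\bsirone, \bsirone)$, I would argue that both reduce to the classical computation of Friedlander--Suslin for the extension algebra of the $r$-th Frobenius twist of the identity functor in the ordinary (non-super) category of strict polynomial functors. Specifically, $\bsirzero$ is an ordinary strict polynomial functor landing in even superspaces, so its $\Ext$-algebra is already known to be a truncated polynomial algebra on generators $\bse_1',\ldots,\bse_r'$ in degrees $2, 2p, \ldots, 2p^{r-1}$, with $(\bse_i')^p = 0$ for $i < r$ and $(\bse_r')^p$ a well-defined nonzero class. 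The same holds for $\bsirone$ after accounting for the parity shift, producing the parallel generators $\bse_i''$. This already gives relations (2) and (4) restricted to each diagonal corner, and the identity $\Pi$-symmetry identifies the two computations.

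For the off-diagonal corners, I would construct explicit chain-level representatives for $\bsc_r$ and $\bsc_r^\Pi$ using the resolution $(X(\g),d_t)$ reviewed in Section \ref{subsec:resolution} (or, equivalently, the Iwai--Shimada / May resolution). The key observation is that the parity change functor $\Pi$ provides an odd-degree self-equivalence of $\bsp$ that acts on extensions by the involutory automorphism $z \mapsto z^\Pi$; this already implies $\Pi$-compatibility of all computations and matches $\bsc_r^\Pi$ to $\bsc_r$ via this involution. Counting ranks in each cohomological degree (for example via a Hilbert series comparison with the classical $\Ext$-algebra) will show that the off-diagonal pieces are each generated as a bimodule over the diagonals by a single class in degree $p^r$, which we identify with $\bsc_r$ and $\bsc_r^\Pi$. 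This gives (3) up to the undetermined signs $\lambda_i$.

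The main obstacle, and what I expect to be the technical heart of the proof, is verifying relation (1): $(\bse_r')^p = \bsc_r \circ \bsc_r^\Pi$ and the $\Pi$-mirror. This is the essential new super phenomenon tying the classical Friedlander--Suslin classes to the parity-crossing classes, and there is no classical analogue to quote. I would attack it by writing down explicit cocycle representatives for $\bse_r'$, $\bsc_r$, and $\bsc_r^\Pi$ on the resolution $(X(\g),d_t)$, composing the representatives of $\bsc_r$ and $\bsc_r^\Pi$ at the chain level, and exhibiting an explicit chain homotopy between that composite and the $p$-th Yoneda power of the representative of $\bse_r'$. A similar but easier direct chain-level calculation should yield (5). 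Once all relations are established, a final Hilbert series argument, comparing the algebra presented by the generators \eqref{eq:Extgenerators} modulo the listed relations against $\Ext_{\bsp}^\bullet(\bsir,\bsir)$ degree by degree, shows that no further relations are needed.
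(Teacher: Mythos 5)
Your proposal diverges from the paper's proof completely, and it contains both structural misconceptions and an outright error, so let me address the substance.

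The paper's proof of \cref{Extalgebrarelations} is a one-paragraph observation: relations (1)--(4) are cited from \cite[Theorem~4.7.1]{Drupieski:2016}, whose statement already gives $\bse_r' \circ \bsc_r = \lambda_r(\bsc_r \circ \bse_r'')$ with $\lambda_r \in \{\pm 1\}$ but leaves $\lambda_r$ undetermined. The new content of (5) is simply that $\lambda_r = 1$, and this is forced by the other relations: from $(\bse_r')^p \circ \bsc_r = \lambda_r^p \cdot \bsc_r \circ (\bse_r'')^p$ and the substitutions $(\bse_r')^p = \bsc_r \circ \bsc_r^\Pi$, $(\bse_r'')^p = \bsc_r^\Pi \circ \bsc_r$, one gets $(\lambda_r)^p = 1$, hence $\lambda_r = 1$ since $p$ is odd. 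You instead propose to re-derive the entire $\Ext_{\bsp}^\bullet(\bsir,\bsir)$ computation from scratch. That is not just a longer route --- it is a wholesale reproof of the main theorem of \cite{Drupieski:2016}, which is not what this theorem is doing.

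Beyond the mismatch in scope, your plan has two concrete errors. First, you claim the diagonal corners ``reduce to the classical computation of Friedlander--Suslin'' for $\Ext_{\cp}^\bullet(I^{(r)}, I^{(r)})$, and in the same breath assert that $(\bse_r')^p$ is ``a well-defined nonzero class.'' These two statements are inconsistent: in the Friedlander--Suslin computation one has $(e_r)^p = 0$, so if the super diagonal corner coincided with the classical one you would get $(\bse_r')^p = 0$, contradicting relation (1) which says $(\bse_r')^p = \bsc_r \circ \bsc_r^\Pi \neq 0$. The nonvanishing of $(\bse_r')^p$ is exactly the new super phenomenon that distinguishes $\Ext_{\bsp}^\bullet(\bsirzero,\bsirzero)$ from $\Ext_{\cp}^\bullet(I^{(r)},I^{(r)})$; the restriction functor $\bsp \to \cp$ induces a map on $\Ext$, not an isomorphism, and this is precisely where the map fails to be injective. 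Second, the resolution $(X(\g),d_t)$ of Iwai--Shimada and May is a $V(\g)$-free resolution of $k$ in a module category for the restricted enveloping algebra of a Lie superalgebra; it does not compute $\Ext$ in the category $\bsp$ of strict polynomial superfunctors, so writing ``cocycle representatives on $(X(\g),d_t)$'' for classes like $\bse_r'$, $\bsc_r \in \Ext_{\bsp}^\bullet(\bsir,\bsir)$ is a category error. The objects of $\bsp$ are functors and the extension classes live there; the resolution $X(\g)$ only becomes relevant after one evaluates and restricts to a supergroup scheme, which is what happens later in \cref{subsec:resolution,subsec:spectrum}, not here.
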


\begin{proof}
The only relations not covered by \cite[Theorem 4.7.1]{Drupieski:2016} are those in (\ref{item:ercr}). By \cite[Theorem 4.7.1]{Drupieski:2016}, $\bse_r' \circ \bsc_r = \lambda_r (\bsc_r \circ \bse_r'')$ and $\bse_r'' \circ \bsc_r^\Pi = \lambda_r (\bsc_r^\Pi \circ \bse_r')$ for some $\lambda_r \in \set{\pm 1}$. Then $(\bse_r')^p \circ \bsc_r = (\lambda_r)^p \cdot \bsc_r \circ (\bse_r'')^p$. But $(\bse_r')^p = \bsc_r \circ \bsc_r^\Pi$ and $(\bse_r'')^p = \bsc_r^\Pi \circ \bsc_r$, so $(\bsc_r \circ \bsc_r^\Pi) \circ \bsc_r = (\lambda_r)^p \cdot \bsc_r \circ (\bsc_r^\Pi \circ \bsc_r)$. This implies that $(\lambda_r)^p = 1$, and hence that $\lambda_r = 1$.
\end{proof}

\begin{remark}
In \cite[Remark 4.7.2]{Drupieski:2016}, the first author guessed that the $\lambda_i$ should all equal $1$. The previous argument, which the author could have given in \cite{Drupieski:2016} had he had the appropriate presence of mind, shows that at least $\lambda_r = 1$. For a fixed $r$, one can if necessary replace $\bse_i'$ by $-\bse_i'$ in order to assume that $\lambda_i = 1$ for $1 \leq i < r$. But $\bse_i'$ is defined in terms of a distinguished extension class $\bse_i \in \Ext_{\bsp}^{2p^{i-1}}(\bsiizero,\bsiizero)$, so if one is interested in the structure of $\Ext_{\bsp}^\bullet(\bsir,\bsir)$ as $r$ varies, it would be desirable to know that all $\lambda_i$ are equal to $1$. If $\bse_i'$ is replaced by $-\bse_i'$, then it will also no longer be the case that $(\bse_i')^\Pi = \bse_i''$.
\end{remark}

Now let $G$ be an infinitesimal supergroup scheme of height $\leq r$. By \cite[Lemma 4.4.1]{Drupieski:2013}, there exists, for some $m,n \in \N$, a closed embedding $G \hookrightarrow GL(m|n)_r$ of $G$ into the $r$-th Frobenius kernel of the general linear supergroup scheme $GL(m|n)$. Observe that
\begin{equation*} %\label{eq:Zgradingglmn}
\begin{aligned}
\g_m &= \Hom_k(k^{m|0},k^{m|0}), & \g_{+1} &= \Hom_k(k^{0|n},k^{m|0}),\\
\g_n &= \Hom_k(k^{0|n},k^{0|n}), & \g_{-1} &= \Hom_k(k^{m|0},k^{0|n}).
\end{aligned}
\end{equation*}
are each naturally subspaces of $\glmn$, with $\glzero = \g_m \oplus \g_n$ and $\glone = \g_{+1} \oplus \g_{-1}$. Evaluation on the superspace $k^{m|n}$ defines an exact functor from $\bsp$ to the category of rational $GL(m|n)$-supermodules. This functor then induces for each pair $T,T' \in \bsp$ an even linear map
\[
\Ext_{\bsp}^\bullet(T,T') \rightarrow \Ext_{GL(m|n)}^\bullet(T(k^{m|n}),T'(k^{m|n})), \quad z \mapsto z|_{GL(m|n)}
\]
that we call restriction to $GL(m|n)$. Cohomology classes can be restricted further to the sub-supergroup scheme $G$ of $GL(m|n)$; we denote this further restriction by $z|_G$. Then as discussed in \cite[\S5.1]{Drupieski:2016}, the restrictions $\bse_i'|_G$ and $\bse_i''|_G$ for $1 \leq i \leq r$ and $\bsc_r|_G$ and $\bsc_r^\Pi|_G$ define even elements
\begin{equation} \label{eq:linearmaps}
\begin{aligned}
\bse_i'|_G &\in \Ext_G^{2p^{r-1}}(k^{m|0(r)},k^{m|0(r)}) &&\cong \Hom_k(\g_m^{*(r)},\opH^{2p^{r-1}}(G,k)), \\
\bse_i''|_G &\in \Ext_G^{2p^{r-1}}(k^{0|n(r)},k^{0|n(r)}) &&\cong \Hom_k(\g_n^{*(r)},\opH^{2p^{r-1}}(G,k)), \\
\bsc_r|_G &\in \Ext_G^{p^r}(k^{0|n(r)},k^{m|0(r)}) &&\cong \Hom_k(\g_{+1}^{*(r)},\opH^{2p^{r-1}}(G,k)), \text{ and} \\
\bsc_r^\Pi|_G &\in \Ext_G^{p^r}(k^{m|0(r)},k^{0|n(r)}) &&\cong \Hom_k(\g_{-1}^{*(r)},\opH^{2p^{r-1}}(G,k)).
\end{aligned}
\end{equation}
Viewing $(\bse_i'+\bse_i'')|_G$ and $(\bsc_r+\bsc_r^\Pi)|_G$ as linear maps into $\Hbul(G,k)$, they extend uniquely to graded superalgebra homomorphisms
\begin{align*}
(\bse_i'+\bse_i'')|_G &: S(\glzero^*[2p^{i-1}])^{(r)} \rightarrow \Hbul(G,k), \quad \text{and} \\
(\bsc_r+\bsc_r^\Pi)|_G &: S(\glone^*[p^r])^{(r)} \rightarrow \Hbul(G,k).
\end{align*}
Taking the product of these homomorphisms, we get a graded superalgebra homomorphism
\begin{equation} \label{eq:phir}
\phi_G: \left( \bigotimes_{i=1}^r S(\glzero^*[2p^{i-1}])^{(r)} \right) \otimes S(\glone^*[p^r])^{(r)} \rightarrow \Hbul(G,k).
\end{equation}
The main consequence of the results in \cite[\S5.5]{Drupieski:2016} and \cite[\S5.4]{Drupieski:2013} is that $\Hbul(G,k)$ is finite over the image of $\phi_G$ and hence that $\Hbul(G,k)$ is a finitely-generated algebra.

Now consider $GL(m|n)_1$, the first Frobenius kernel of $GL(m|n)$. A key step in the proof that $\Hbul(G,k)$ is finite over $\phi_G$ involved verifying that $(\bse_r'+\bse_r'')|_{GL(m|n)_1}$ and $(\bsc_r+\bsc_r^\Pi)|_{GL(m|n)_1}$ admit particular descriptions. Specifically, consider the May spectral sequence \cite[Corollary 5.2.3]{Drupieski:2013}
\begin{equation} \label{eq:Mayspecseq}
E_0^{i,j} = \Lambda_s^j(\glmn^*) \otimes S^{i/2}(\glzero^*)^{(1)} \Rightarrow \opH^{i+j}(GL(m|n)_1,k).
\end{equation}
Here the superscript $i/2$ means that $E_0^{i,j} = 0$ unless $i$ is even. In the proof of \cite[Theorem 5.5.1]{Drupieski:2016}, the first author verified the following properties:
	\begin{enumerate}[label=(\thesubsection.\arabic*)]
	\setcounter{enumi}{\value{equation}}
	
	\item Replacing $\bse_r'+\bse_r''$ by a scalar multiple if necessary,
	\[
	(\bse_r'+\bse_r'')|_G : S(\glzero^*[2p^{r-1}])^{(r)} \rightarrow \Hbul(GL(m|n)_1,k)
	\]
	is equal to the composition of the $p^{r-1}$-power map $S(\glzero^*[2p^{r-1}])^{(r)} \rightarrow S(\glzero^*[2])^{(1)}$ with the horizontal edge map $E_0^{\bullet,0} \rightarrow \Hbul(GL(m|n)_1,k)$ of \eqref{eq:Mayspecseq}. \label{ertoG1}

	\item Replacing $\bsc_r + \bsc_r^\Pi$ by a scalar multiple if necessary, the composition of
	\[
	(\bsc_r + \bsc_r^\Pi)|_{GL(m|n)_1}: S(\glone^*[p^r])^{(r)} \rightarrow \Hbul(GL(m|n)_1,k)
	\]
	with the vertical edge map $\Hbul(GL(m|n)_1,k) \rightarrow E_0^{0,\bullet}$ of \eqref{eq:Mayspecseq} is equal to the composition of the $p^r$-power map $S(\glone^*[p^r])^{(r)} \rightarrow S(\glone^*)$ and the inclusion into $\Lambda_s(\glmn^*)$. \label{crtoG1}
	
	\setcounter{equation}{\value{enumi}}
	\end{enumerate}
The verification in \cite[\S5.5]{Drupieski:2016} of these properties treated the classes $\bse_r'$, $\bse_r''$, $\bsc_r$, and $\bsc_r^\Pi$ separately, but since $(\bse_r')^\Pi = \bse_r''$, the classes $\bse_r'$ and $\bse_r''$ can be rescaled if necessary by the same scalar factor, and similarly for $\bsc_r$ and $\bsc_r^\Pi$. 

More generally, let $G$ be a sub-supergroup scheme of $GL(m|n)_1$ and set $\g = \Lie(G)$. Then $G = G_1$ and the May spectral sequence for $G$ takes the form
\begin{equation} \label{eq:MayspecseqG}
E_0^{i,j} = \Lambda_s^j(\g^*) \otimes S^{i/2}(\gzero^*)^{(1)} \Rightarrow \opH^{i+j}(G,k).
\end{equation}
Since the May spectral sequence is natural with respect to $G$, the following properties are immediate consequences of \ref{ertoG1} and \ref{crtoG1}:
	\begin{enumerate}[label=(\thesubsection.\arabic*)]
	\setcounter{enumi}{\value{equation}}
	
	\item Up to a scalar factor, the homomorphism $(\bse_r'+\bse_r'')|_G: S(\glzero^*)^{(r)} \rightarrow \Hbul(G,k)$ is equal to the composite map
\[
S(\glzero^*[2p^{r-1}])^{(r)} \rightarrow S(\gzero^*[2p^{r-1}])^{(r)} \rightarrow S(\gzero^*[2])^{(1)} \rightarrow \Hbul(G,k),
\]
where the first arrow is induced by restriction from $\glzero^*$ to $\gzero$, the second arrow is the $p^{r-1}$-power map, and the last arrow is the horizontal edge map of \eqref{eq:MayspecseqG}. \label{ertoG}

	\item Up to a scalar factor, the composition of $(\bsc_r+\bsc_r^\Pi)|_G: S(\glone^*[p^r])^{(r)} \rightarrow \Hbul(G,k)$ with the vertical edge map of \eqref{eq:MayspecseqG} is equal to the composite map
\[
S(\glone^*[p^r])^{(r)} \rightarrow S(\gone^*[p^r])^{(r)} \rightarrow S(\gone^*) \hookrightarrow \Lambda_s(\g^*),
\]
where the first arrow is induced by restriction from $\glone^*$ to $\gone^*$, the second arrow is the $p^r$-power map, and the third arrow is the natural inclusion. \label{crtoG}
	\setcounter{equation}{\value{enumi}}
	\end{enumerate}

\subsection{The projective resolution of Iwai--Shimada and May} \label{subsec:resolution}

Let $\g$ be a finite-dimensional restricted Lie superalgebra over $k$, and write $V(\g)$ for the restricted enveloping superalgebra of $\g$. Iwai and Shimada \cite{Iwai:1965} and May \cite[\S6]{May:1966} described a recipe for constructing a $V(\g)$-free resolution $(X(\g),d_t)$ of the trivial module $k$. We will require some of the details of this construction in order to analyze the maximal ideal spectrum of the cohomology ring $\Hbul(V(\g),k)$. We summarize the necessary details here and refer the reader to \cite[\S3.3]{Drupieski:2013} for more information.

Recall from \cite[\S2.3]{Drupieski:2016} that $\bsa(\g)$ denotes the graded superalgebra of alternating powers on $\g$; it is isomorphic to the graded tensor product of superalgebras $\Lambda(\gzero) \gotimes \Gamma(\gone)$, where $\Gamma(\gone)$ denotes the ordinary divided power algebra on $\gone$. The right adjoint action of $\g$ on itself induces on $\bsa(\g)$ the structure of a right $V(\g)$-supermodule, and hence also of a right $U(\g)$-supermodule. Let $Y(\g) = U(\g) \# \bsa(\g)$ and $W(\g) = V(\g) \# \bsa(\g)$ be the corresponding smash product superalgebras. We consider $Y(\g)$ and $W(\g)$ as homologically graded superspaces with $U(\g)$ and $V(\g)$ concentrated in homological degree $0$ and $\bsa^i(\g)$ concentrated in homological degree $i$. Then $Y(\g)$ identifies with the Koszul resolution for $\g$ as discussed in \cite[\S3.1]{Drupieski:2013}. The Koszul differential on $Y(\g)$ induces an inexact differential $d$ on $W(\g)$, which makes $W(\g)$ into a differential graded superalgebra.

Let $\Gamma(\gzero[2])^{(1)}$ be the divided power algebra on $\gzero$, its vector space structure twisted by the Frobenius map ($\lambda \mapsto \lambda^p$) on $k$, and considered as a homologically graded superspace with $\gzero$ concentrated in $\Z$-degree $2$. The algebra structure on $W(\gzero)$ together with the natural coalgebra structure on $\Gamma(\gzero[2])^{(1)}$ induces on
\[ \textstyle
R = \bigoplus_{n \geq 0} R^n = \bigoplus_{n \geq 0} \bigoplus_{i \geq 0} \Hom_k(\Gamma^i(\gzero[2])^{(1)},W_{i-n}(\gzero))
\]
the structure of a graded superalgebra. Denote the product of elements $r,r' \in R$ by $r \cup r'$. Then an element $t \in R^1$, i.e., a linear map $t: \Gamma(\gzero[2])^{(1)} \rightarrow W(\gzero)$ of homological degree $-1$, is called a \emph{twisting cochain} if $d \circ (t \cup t) = 0$.

Define $X(\g)$ to be the graded superspace $W(\g) \otimes \Gamma(\gzero[2])^{(1)}$. Now given a twisting cochain $t$ as above, the corresponding differential $d_t: X(\g) \rightarrow X(\g)$ is defined as follows: Let $w \in W(\g)$ and $\gamma \in \Gamma(\gzero[2])^{(1)}$ be homogeneous elements. Denote the homological degree of $w$ by $\deg(w)$ and write $\sum \gamma' \otimes \gamma''$ for the coproduct in $\Gamma(\gzero[2])^{(1)}$ of $\gamma$. Then $d_t: X(\g) \rightarrow X(\g)$ is defined by
\begin{equation} \label{eq:dtdifferential} \textstyle
d_t(w \otimes \gamma) = d(w) \otimes \gamma + (-1)^{\deg(w)} \sum [w \cdot t(\gamma')] \otimes \gamma'',
\end{equation}
where $w \cdot t(\gamma')$ denotes the product in $W(\g)$ of $w$ and $t(\gamma') \in W(\gzero) \subseteq W(\g)$.

In \cite{Iwai:1965,May:1966} (see also \cite[Lemma 3.3.1]{Drupieski:2013}) it is shown that a twisting cochain $t$ can always be constructed such that the resulting chain complex $(X(\g),d_t)$ is a $V(\g)$-free resolution of the trivial module. The proof of this fact depends, however, on the choice of a fixed basis for $\gzero$, so the resolution $(X(\g),d_t)$ need not be natural in $\g$. In the construction, the action of $t$ on $\Gamma^i(\gzero[2])^{(1)}$ is defined by induction on $i$ so that the following properties are satisfied:
	\begin{itemize}
	\item[$i=0$:] If $\ve: W(\g) \rightarrow k$ denotes the natural augmentation map on $W(\g)$, then $\ve \circ t = 0$.

	\item[$i=1$:] If $x$ is one of the fixed basis vectors for $\gzero$, then $t(\gamma_1(x)) = x^{p-1} \subgrp{x} - \subgrp{x^{[p]}}$. Here $\gamma_1(x)$ is one of the divided power generators for $\Gamma(\gzero[x])^{(1)}$ (cf.\ \cite[\S2.3]{Drupieski:2013}), $x^{[p]}$ denotes the image of $x$ under the $p$-map making $\gzero$ into a restricted Lie algebra, $x^{p-1}$ is the obvious monomial in $V(\g)$, and $\subgrp{x}$ and $\subgrp{x^{[p]}}$ are the obvious monomials in $\Lambda^1(\gzero) \subset W^1(\g)$.
	\end{itemize}
For the details of the inductive construction, see \cite[Lemma 3.3.1]{Drupieski:2013}.\footnote{More precisely, \cite[Lemma 3.3.1]{Drupieski:2013} asserts the existence of an appropriate linear map $t$ with image in $Y(\gzero)$. To match the exposition presented here, one must then compose this $t$ with the quotient map $Y(\gzero) \rightarrow W(\gzero)$.}

\begin{remark} \label{trivialtwistingcochain}
By \cite[Remark 3.3.2]{Drupieski:2013}, if $\gzero$ is abelian, then $t$ can be constructed to be trivial in homological degrees greater than $2$, i.e., such that $t(\Gamma^i(\gzero[2])^{(1)}) = 0$ for $i > 1$. In the notation used there, if $\gzero$ is abelian, then $r_2 = 0$ regardless of whether or not the $p$-map on $\gzero$ is trivial.
\end{remark}

\begin{remark} \label{extendbasis}
Suppose $\fs$ is a sub-Lie superalgebra of $\g$. Fix a homogeneous basis $S$ for $\fs$, and then extend $S$ to a homogeneous basis $B$ for $\g$. Then the inclusion $\iota: S \hookrightarrow B$ extends to inclusions of chain complexes $Y(\iota): Y(\fs) \hookrightarrow Y(\g)$ and $W(\iota): W(\fs) \hookrightarrow W(\g)$ and an inclusion of graded superspaces $\Gamma(\iota): \Gamma(\fs_{\zero}[2])^{(1)} \hookrightarrow \Gamma(\gzero[2])^{(1)}$. Suppose $t': \Gamma(\fs_{\zero}[2])^{(1)} \rightarrow W(\fs_{\zero})$ is a twisting cochain such that $(X(\fs),d_{t'})$ is a $V(\fs)$-free resolution of the trivial module. Then identifying $\Gamma(\fs_{\zero}[2])^{(1)}$ with its image under $\Gamma(\iota)$, $t'$ can be extended to a twisting cochain $t: \Gamma(\gzero[2])^{(1)} \rightarrow W(\gzero)$ such that $(X(\g),d_t)$ is a $V(\g)$-free resolution of the trivial module. This follows from the argument in \cite[Lemma 3.3.1]{Drupieski:2013} by inductively defining $t$ on the subspace $\Gamma^i(\fs_{\zero}[2])^{(1)}$ of $\Gamma^i(\gzero[2])^{(1)}$ to agree with $t'$ and defining $t$ arbitrarily on any complementary subspace of $\Gamma^i(\gzero[2])^{(1)}$ such that (in the notation of \cite{Drupieski:2013}) $d \circ t_{2i} = r_i$ for $i > 2$. It is possible to define $t$ on $\Gamma^i(\fs_{\zero}[2])^{(1)}$ to agree with $t'$ because the image of $Y(\fs)$ under $Y(\iota)$ is an exact subcomplex of $Y(\g)$. Now since $t$ extends $t'$, the inclusion $\iota$ extends to a monomorphism of projective resolutions $X(\iota): (X(\fs),d_{t'}) \hookrightarrow (X(\g),d_t)$.
\end{remark}

As discussed in \cite{Iwai:1965,May:1966} but not in \cite{Drupieski:2013}, one can construct a diagonal approximation $\Delta_s: X(\g) \rightarrow X(\g) \otimes X(\g)$ in terms of the natural coproducts $\Delta_W$ and $\Delta_\Gamma$ on $W(\g)$ and $\Gamma(\gzero[2])^{(1)}$, the algebra structure of $W(\g)$, and a linear map $s: \Gamma(\gzero[2])^{(1)} \rightarrow W(\gzero) \otimes W(\gzero)$ of homological degree $0$. The map $s$ is called a \emph{twisting diagonal cochain} in \cite{Iwai:1965} and is called a \emph{$t$-twisting coproduct} in \cite{May:1966}. We will not go into the details of the particular properties that $s$ must satisfy, but given the map $s$, and given $w \in W(\g)$ and $\gamma \in \Gamma(\gzero[2])^{(1)}$ as before, $\Delta_s$ is defined by
\begin{equation} \label{eq:Xgcoproduct} \textstyle
\Delta_s(w \otimes \gamma) = \sum [\Delta_W(w) \cdot s(\gamma')] \cdot \Delta_\Gamma(\gamma''),
\end{equation}
where $\Delta_W(w) \cdot s(\gamma')$ denotes the product of $\Delta_W(w) \in W(\g) \otimes W(\g)$ and $s(\gamma') \in W(\gzero) \otimes W(\gzero)$ inside the graded tensor product of superalgebras $W(\g) \gotimes W(\g)$. The supertwist map induces an isomorphism of graded superspaces
\[
(W(\g) \otimes W(\g)) \otimes (\Gamma(\gzero[2])^{(1)} \otimes \Gamma(\gzero[2])^{(1)}) \cong (W(\g) \otimes \Gamma(\gzero[2])^{(1)}) \otimes (W(\g) \otimes \Gamma(\gzero[2])^{(1)}).
\]
Right multiplication by $\Delta_\Gamma(\gamma'')$ in \eqref{eq:Xgcoproduct} then has the evident meaning.

Now $\Hbul(V(\g),k)$ can be computed as the cohomology of the cochain complex $\Hom_{V(\g)}(X(\g),k)$. Applying the duality isomorphisms of \cite[\S2.6]{Drupieski:2016}, there exists an isomorphism of graded superspaces
\begin{equation} \label{eq:HomXg}
\Hom_{V(\g)}(X(\g),k) \cong \Hom_k(\bsa(\g) \otimes \Gamma(\gzero[2])^{(1)},k) \cong \Lambda_s(\g^*) \otimes S(\gzero^*[2])^{(1)}.\footnote{In \cite{Drupieski:2016}, the superexterior algebra $\Lambda_s(\g^*)$ is denoted $\bsl(\g^*)$.}
\end{equation}
The diagonal approximation $\Delta_s$ induces a typically nonassociative product on $\Hom_{V(\g)}(X(\g),k)$. In particular, the induced product on cochains need not agree with the natural superalgebra structure of the tensor product $\Lambda_s(\g^*) \otimes S(\gzero^*[2])^{(1)}$. However, using the fact that the twisting diagonal cochain $s$ has image in the subalgebra $W(\gzero) \otimes W(\gzero)$ of $W(\g) \otimes W(\g)$, one can show that, when restricted to the subspace $S(\gone^*) \otimes S(\gzero^*[2])^{(1)}$ of $\Lambda_s(\g^*) \otimes S(\gzero^*[2])^{(1)}$, the induced product on cochains does agree with the natural superalgebra structure on $S(\gone^*) \otimes S(\gzero^*[2])^{(1)}$.

\begin{remark} \label{Xedgemap}
Let $G$ be the finite $k$-supergroup scheme with $kG = V(\g)$. Then $\Hbul(G,k)$ identifies with $\Hbul(V(\g),k)$, and as discussed in \cite[\S3.5]{Drupieski:2013} the May spectral sequence for $G$ \eqref{eq:MayspecseqG} can be constructed from a filtration on the resolution $X(\g)$. In terms of this construction, the subalgebra $S(\gzero^*[2])^{(1)}$ of $\Hom_{V(\g)}(X(\g),k)$ identifies with the row $j=0$ of \eqref{eq:MayspecseqG}. In particular, $S(\gzero^*[2])^{(1)}$ consists of cocycles in $\Hom_{V(\g)}(X(\g),k)$. Now in terms of the isomorphism \eqref{eq:HomXg}, the horizontal edge map of \eqref{eq:MayspecseqG} is induced by the inclusion of $S(\gzero^*[2])^{(1)}$ into $\Lambda_s(\g^*) \otimes S(\gzero^*[2])^{(1)}$.
\end{remark}

\cref{Xedgemap} enables us to interpret the homomorphism $(\bse_1'+\bse_1'')|_G$ in the $r=1$ case of \ref{ertoG} in terms of the complex $X(\g)$. The next lemma provides an analogous interpretation of the homomorphism $(\bsc_1+\bsc_1^\Pi)|_G$ in the case $r=1$ of \ref{crtoG}.

\begin{lemma} \label{oddfactorization}
Let $m,n \geq 1$, let $\g$ be a finite-dimensional restricted sub-Lie superalgebra of $\glmn$, and let $G$ be the sub-supergroup scheme of $GL(m|n)_1$ with $kG = V(\g)$. Up to a scalar factor, the homomorphism $(\bsc_1 + \bsc_1^\Pi)|_G: S(\glone^*[p])^{(1)} \rightarrow \Hbul(V(\g),k)$ identifies with the composite
\begin{equation} \label{eq:oddcomposite}
S(\glone^*[p])^{(1)} \rightarrow S(\gone^*[p])^{(1)} \rightarrow \Hbul(V(\g),k),
\end{equation}
where the first arrow is restriction from $\glmn$ to $\g$ and the second arrow is induced via \eqref{eq:HomXg} by the $p$-power map $S(\gone^*[p])^{(1)} \rightarrow S(\gone^*)$ and the inclusion $S(\gone^*) \subseteq \Lambda_s(\g^*)$.
\end{lemma}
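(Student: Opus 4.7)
The plan is to build on property \ref{crtoG} with $r=1$ by upgrading its edge-map identification to an on-the-nose equality of cohomology classes, working throughout at the cochain level via the Iwai--Shimada--May resolution $X(\g)$. As a preliminary reduction, both sides of the claimed identification are graded superalgebra homomorphisms from $S(\glone^*[p])^{(1)}$ to $\Hbul(V(\g),k)$: the left-hand side by construction of $(\bsc_1+\bsc_1^\Pi)|_G$, and the right-hand side because, as noted just after \eqref{eq:HomXg}, the product induced on the subalgebra $S(\gone^*) \otimes S(\gzero^*[2])^{(1)}$ of the Hom complex agrees with the natural superalgebra product. Hence it suffices to check equality (up to a single scalar) on the generating subspace $\glone^*[p]$.

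Second, for $\xi \in \glone^*$, I would verify that the cochain $(\xi|_\g)^p \in S^p(\gone^*) \subseteq \Lambda_s^p(\g^*) \otimes S^0(\gzero^*[2])^{(1)}$ is a cocycle for the total differential $\delta$ on the Hom complex. Writing $\delta = \delta_0 + \delta_t$, where $\delta_0$ is the dual Koszul differential (a derivation on $\Lambda_s(\g^*)$, trivial on $S(\gzero^*[2])^{(1)}$) and $\delta_t$ arises from the twisting cochain $t$ and raises $S(\gzero^*[2])^{(1)}$-degree, the Leibniz rule and characteristic $p$ give $\delta_0((\xi|_\g)^p) = p\,(\xi|_\g)^{p-1}\,\delta_0(\xi|_\g) = 0$. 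For the twisting-cochain part, I would use the explicit formula $t(\gamma_1(x)) = x^{p-1}\subgrp{x} - \subgrp{x^{[p]}}$ together with an inductive analysis of $t$ on higher divided powers (compatible with \cref{extendbasis}) to show that the $\delta_t$-contribution vanishes on $(\xi|_\g)^p$: as a functional on $\bsa(\g)$, $(\xi|_\g)^p$ is supported on the component $\Gamma^p(\gone) \subseteq \bsa^p(\g)$ and so annihilates every term whose $\bsa(\g)$-component involves a factor from $\Lambda^{\geq 1}(\gzero)$.

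Third, once $(\xi|_\g)^p$ is known to be a cocycle, it defines a class $[(\xi|_\g)^p] \in \opH^p(V(\g),k)$, and \ref{crtoG} with $r=1$ implies that $[(\xi|_\g)^p]$ and $(\bsc_1+\bsc_1^\Pi)|_G(\xi)$ have the same image under the vertical edge map of \eqref{eq:MayspecseqG}. To promote this to equality in $\opH^p(V(\g),k)$ itself, I would trace through the construction of $\bsc_1 \in \Ext_{\bsp}^{p}(\bsi_1^{(1)},\bsi_0^{(1)})$ from \cite{Drupieski:2016}, extract an explicit cocycle representative, and verify---after passing through the evaluation functor to $GL(m|n)_1$ and restriction to $G$---that this representative already lies in $\Lambda_s^p(\g^*) \otimes S^0(\gzero^*[2])^{(1)}$ and agrees with $(\xi|_\g)^p$ up to the asserted scalar.

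The hard part will be this last step. Upgrading the edge-map identification of \ref{crtoG} to an on-the-nose equality requires ruling out contributions from the higher filtration pieces $F^{\geq 1}\opH^p(V(\g),k)$ of the May spectral sequence, which \ref{crtoG} alone does not control. The obstruction is analogous to the analysis furnished by \cref{Xedgemap} for the horizontal edge map, but in the vertical direction it is more delicate since the natural lift of a class from $E_\infty^{0,\bullet}$ back to $\Hbul(V(\g),k)$ is not canonical and must be pinned down by inspecting the cocycle produced by $\bsc_1$ in $\bsp$.
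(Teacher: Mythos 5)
Your proposal correctly identifies the central difficulty: property \ref{crtoG} with $r=1$ only pins down $(\bsc_1+\bsc_1^\Pi)|_G$ modulo higher filtration in the May spectral sequence, and you must rule out contributions from $F^{\geq 1}\opH^p(V(\g),k)$. However, your plan for overcoming this — tracing through the explicit construction of $\bsc_1$ in $\bsp$, extracting a cocycle representative, pushing it through evaluation on $k^{m|n}$ and restriction to $G$, and verifying agreement with $(\xi|_\g)^p$ — is left as a sketch of steps to be carried out rather than an argument, and it is genuinely different from (and plausibly harder than) what the paper actually does. The paper never touches a cocycle representative for $\bsc_1$ in $\bsp$. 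Instead it first specializes to $\g = \glmn$ and introduces the maximal torus $T \subseteq GL(m|n)$, observing that $(\bsc_1+\bsc_1^\Pi)|_G$ is $T$-equivariant. For each odd weight $\alpha$, it deduces from the proof of \cite[Theorem 5.5.1]{Drupieski:2016} that the $p\alpha$-weight space of the entire $E_0$-page is one-dimensional and concentrated in total degree $p$. This dimension count is what kills the higher filtration: once the $p\alpha$-weight space of $\opH^p(V(\glmn),k)$ itself is one-dimensional, the cocycle $\wt{\varphi}(\xi)$ (already known to be a cocycle and already known to hit the right class under the vertical edge map, by \ref{crtoG1}) must represent exactly $(\bsc_1+\bsc_1^\Pi)|_{GL(m|n)_1}(\xi)$ up to scalar, with no correction terms possible. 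The general $\g \subseteq \glmn$ case is then reduced to $\glmn$ via \cref{extendbasis}: a compatible choice of bases and twisting cochains produces a monomorphism of resolutions $X(\iota)$ whose dual identifies, under \eqref{eq:HomXg}, with restriction of linear functions, and $(\bsc_1+\bsc_1^\Pi)|_G$ factors through restriction from $\glmn$ by definition.

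Two concrete gaps in your write-up, then. First, the reduction to $\glmn$ and the compatibility of resolutions (the content of \cref{extendbasis}) is not addressed; you only gesture at restriction at the end of step three without explaining why the cochain-level restriction map is the natural one. Second, and more seriously, the "hard part" is diagnosed but not solved: you assert you would "verify" that the restricted $\bsc_1$-cocycle lies in $\Lambda_s^p(\g^*) \otimes S^0(\gzero^*[2])^{(1)}$, but give no mechanism by which such a verification could be carried out or why it should succeed. The paper's weight-space argument is precisely the mechanism that makes such a verification unnecessary, and without it (or an equivalent rigidity input) your step three remains a wish rather than a proof.
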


\begin{proof}
First suppose $\g = \glmn$, so that $G = GL(m|n)_1$, and let $T$ be the subgroup of diagonal matrices in $GL(m|n)$. Then $T$ acts on $\g$ by conjugation, and $(\bsc_1+\bsc_1^\Pi)|_G : S(\glone^*[p])^{(1)} \rightarrow \Hbul(V(\g),k)$ becomes a homomorphism of rational $T$-modules. Let $\Phi_{\odd}$ be the set of weights of $T$ in $\glone^*$ and let $\alpha \in \Phi_{\odd}$. It follows from the argument in the second and third paragraphs of the proof of \cite[Theorem 5.5.1]{Drupieski:2016} that the $p\alpha$-weight space in the $E_0$-page of the May spectral sequence for $G$, i.e., the $p\alpha$-weight space in $\Lambda_s(\g^*) \otimes S(\gzero^*[2])^{(1)}$, is one-dimensional and occurs in total degree $p$. Then the $p\alpha$-weight space in $\Hbul(V(\g),k)$ must also be one-dimensional and concentrated in cohomological degree $p$. Next recall the injective homomorphism $\wt{\varphi}: S(\gone^*[p])^{(1)} \rightarrow \Lambda_s(\g^*)$ discussed in \eqref{eq:ppowermap}, and consider $\Lambda_s(\g^*)$ as a subspace of $\Hom_{V(\g)}(X(\g),k)$ via \eqref{eq:HomXg}. Since the twisting cochain for $X(\g)$ has image in $W(\gzero)$, it follows that the image of $\wt{\varphi}$ consists of cocycles in $\Hom_{V(\g)}(X(\g),k)$. In particular, $\wt{\varphi}(\gone^*[p]^{(1)})$ consists of cocycles in $\Hom_{V(\g)}(X(\g),k)$. Now observe from the construction in \cite[\S3.5]{Drupieski:2013} that the subspace $\Lambda_s(\g^*)$ of $\Hom_{V(\g)}(X(\g),k)$ identifies with the first column of the May spectral sequence for $V(\g)$. Using this, one can deduce that the space $\wt{\varphi}(\gone^*[p]^{(1)})$ consists of cocycle representatives for the subspace of $\Hbul(V(\g),k)$ spanned by all weight vectors of the form $p\alpha$ for $\alpha \in \Phi_{\odd}$.\footnote{Recall that the construction of a twisting cochain making $X(\g)$ into a projective resolution of the trivial module depends on the choice of a fixed basis for $\gzero$. The previous conclusion holds regardless of which basis, hence which twisting cochain, is considered.} Then by dimension comparison, $\wt{\varphi}(\gone^*[p]^{(1)})$ consists of cocycle representatives for the image of $\glone^*[p]^{(1)}$ under $(\bsc_1+\bsc_1^\Pi)|_G$. Finally, the factorization \eqref{eq:oddcomposite} now follows from the $r=1$ case of \ref{crtoG1}.

Now let $\g$ be an arbitrary restricted sub-Lie superalgebra of $\glmn$, and suppose $kG = V(\g)$. As in \cref{extendbasis}, choose homogeneous bases $S$ and $B$ and twisting cochains $t'$ and $t$ for $\g$ and $\glmn$, respectively, such that the inclusion $\iota: S \hookrightarrow B$ extends to a monomorphism of projective resolutions $X(\iota): (X(\g),d_{t'}) \hookrightarrow (X(\glmn),d_t)$. Then the map of cochain complexes
\[
X(\iota)^*: \Hom_{V(\glmn)}(X(\glmn),k) \rightarrow \Hom_{V(\g)}(X(\g),k)
\]
induced by $X(\iota)$ identifies via \eqref{eq:HomXg} with the natural map
\[
\Lambda_s(\glmn^*) \otimes S(\glzero^*[2])^{(1)} \rightarrow \Lambda_s(\g^*) \otimes S(\gzero^*[2])^{(1)}
\]
induced by restriction of linear functions from $\glmn$ to $\g$. Passing to cohomology, $X(\iota)^*$ induces the restriction homomorphism $\Hbul(V(\glmn),k) \rightarrow \Hbul(V(\g),k)$. Since $(\bsc_1+\bsc_1^\Pi)|_G$ factors through the restriction map $\Hbul(V(\glmn),k) \rightarrow \Hbul(V(\g),k)$ by definition, the factorization \eqref{eq:oddcomposite} then follows from the corresponding factorization for $\glmn$.
\end{proof}

\begin{proposition} \label{phifactorsthroughg}
Let $G$ be a sub-supergroup scheme of $GL(m|n)_1$, and set $\g = \Lie(G) \subseteq \glmn$. Then the homomorphism
\begin{equation} \label{eq:phi}
\phi_G: S(\glzero^*[2])^{(1)} \otimes S(\glone^*[p])^{(1)} \rightarrow \Hbul(G,k)
\end{equation}
obtained by taking $r=1$ in \eqref{eq:phir} factors through the restriction homomorphism
\[
S(\glzero^*[2])^{(1)} \otimes S(\glone^*[p])^{(1)} \rightarrow S(\gzero^*[2])^{(1)} \otimes S(\gone^*[p])^{(1)}.
\]
\end{proposition}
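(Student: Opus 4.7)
The plan is to directly unwind the definition of $\phi_G$ from \eqref{eq:phir} in the case $r=1$ and show that each of its two tensor factors individually factors through restriction from $\glmn^*$ to $\g^*$. By definition, $\phi_G$ is the product (i.e., the superalgebra homomorphism obtained from the universal property of the symmetric algebra) of the two maps
\[
(\bse_1'+\bse_1'')|_G : S(\glzero^*[2])^{(1)} \to \Hbul(G,k) \quad\text{and}\quad (\bsc_1+\bsc_1^\Pi)|_G : S(\glone^*[p])^{(1)} \to \Hbul(G,k).
\]
Thus it suffices to show that each of these two homomorphisms factors through the restriction maps $S(\glzero^*[2])^{(1)} \to S(\gzero^*[2])^{(1)}$ and $S(\glone^*[p])^{(1)} \to S(\gone^*[p])^{(1)}$, respectively.

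For the even factor, I would simply invoke \ref{ertoG} specialized to $r=1$: up to a scalar, $(\bse_1'+\bse_1'')|_G$ equals the composite $S(\glzero^*[2])^{(1)} \to S(\gzero^*[2])^{(1)} \to \Hbul(G,k)$, where the first map is induced by restriction of linear functionals from $\glzero$ to $\gzero$ and the second map is the horizontal edge map of the May spectral sequence \eqref{eq:MayspecseqG}. In particular, this map factors through the restriction homomorphism, which is exactly what we want. For the odd factor, I would apply \cref{oddfactorization} directly: up to a scalar, $(\bsc_1+\bsc_1^\Pi)|_G$ equals the composite $S(\glone^*[p])^{(1)} \to S(\gone^*[p])^{(1)} \to \Hbul(G,k)$, where again the first map is restriction from $\glone^*$ to $\gone^*$.

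Combining these two factorizations, $\phi_G$ equals (up to scalars) the composite
\[
S(\glzero^*[2])^{(1)} \otimes S(\glone^*[p])^{(1)} \;\longrightarrow\; S(\gzero^*[2])^{(1)} \otimes S(\gone^*[p])^{(1)} \;\longrightarrow\; \Hbul(G,k),
\]
where the first arrow is the tensor product of the two restriction maps and the second arrow is the tensor product of the horizontal edge map with the $p$-power/inclusion map described in \cref{oddfactorization}. This is exactly the desired factorization.

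There is essentially no obstacle here: the real work has already been carried out in Section \ref{subsec:CFG} (properties \ref{ertoG} and \ref{crtoG}, established via naturality of the May spectral sequence from the absolute case $G = GL(m|n)_1$) and in \cref{oddfactorization} (which is the $r=1$ refinement needed for the odd factor, obtained by a weight-space argument on $GL(m|n)_1$ combined with functoriality of $X(\g)$ under inclusions of Lie sub-superalgebras). The proposition is essentially a bookkeeping corollary assembling these two facts. The only subtlety worth mentioning in the written proof is that the map $\phi_G$ is well-defined on the tensor product $S(\glzero^*[2])^{(1)} \otimes S(\glone^*[p])^{(1)}$ because the images of the two factors $(\bse_1'+\bse_1'')|_G$ and $(\bsc_1+\bsc_1^\Pi)|_G$ lie in the (graded-commutative) ring $\Hbul(G,k)$, so one can apply the universal property of the symmetric algebra on each factor separately and then multiply.
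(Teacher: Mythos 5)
Your proof is correct and takes the same route as the paper: the paper's proof consists of the single sentence that the result is an immediate consequence of \cref{oddfactorization} and the $r=1$ case of \ref{ertoG}, and your write-up simply spells out that observation, factoring $\phi_G$ as the product of its even and odd pieces and applying those two results to each piece.
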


\begin{proof}
This is now an immediate consequence of \cref{oddfactorization} and the $r=1$ case of \ref{ertoG}.
\end{proof}

\subsection{Examples} \label{subsec:examples}

\begin{example} \label{smallestrestrictedexample}
Let $\g$ be the restricted Lie superalgebra over $k$ generated by a nonzero odd element $y$ and a nonzero even element $x$ such that $\frac{1}{2}[y,y] = x$ (hence $[x,y]=0$) and $x^{[p]} = x$. Then $\{ x,y \}$ is a homogeneous basis for $\g$. Let $\{ x^*,y^* \}$ be the corresponding dual basis. Then one can use the resolution $X(\g)$ to show that $\Hbul(V(\g),k) \cong k[x^*,y^*]/\subgrp{x^*-(y^*)^2}$, where $x^*$ corresponds to the degree-$2$ polynomial generator of $S(\gzero^*[2])^{(1)}$; for details, see \cite[Example 5.2.1]{Drupieski:2016}.

On the other hand, by \cite[Corollary 5.2.3]{Drupieski:2013} there exists a spectral sequence
\begin{equation} \label{eq:Mayspecseqsmallexample}
E_2^{i,j} = \opH^j(\g,k) \otimes S^{i/2}(\gzero^*)^{(1)} \Rightarrow \opH^{i+j}(V(\g),k),
\end{equation}
By \cref{smallestcalculation}, $\Hbul(\g,k)$ identifies with the truncated polynomial ring $k[y^*]/\subgrp{(y^*)^2}$ generated in cohomological degree $1$. The spectral sequence \eqref{eq:Mayspecseqsmallexample} can also be constructed in terms of a filtration on the resolution $X(\g)$ \cite[\S3.5]{Drupieski:2013}, and using this construction one can then check that the differential $d_2: E_2^{0,1} \rightarrow E_2^{2,0}$ is trivial. This implies by the multiplicative structure of \eqref{eq:Mayspecseqsmallexample} that the spectral sequence halts at the $E_2$-page, and hence that $E_\infty$ is isomorphic as a bigraded algebra to $\Hbul(\g,k) \otimes S(\gzero^*[2])^{(1)} \cong k[x^*,y^*]/\subgrp{(y^*)^2}$, with $x^*$ in bidegree $(2,0)$ and $y^*$ in bidegree $(0,1)$.
\end{example}

\cref{smallestrestrictedexample} shows that there can be algebra relations in the cohomology ring $\Hbul(V(\g),k)$ that cannot be seen by the corresponding May spectral sequence.

\begin{example} \label{needppowers}
Let $\g$ be the restricted Lie superalgebra over $k$ generated by a nonzero odd element $y$ and a nonzero even element $x$ such that $[y,y] = 0$, $[y,x] = y$, and $x^{[p]} = x$. Then $\{x,y \}$ is a basis for $\g$. As in \cite[Example 5.2.1]{Drupieski:2016}, we can write a typical homogeneous monomial in $\bsa(\g) \otimes \Gamma(\gzero[2])^{(1)}$ in the form $\subgrp{x^a} \gamma_b(y)\gamma_c(x)$ for some $a,b,c \in \N$ with $a \leq 1$; cf.\ also the notation in \cite[\S3.3]{Drupieski:2013}. The subalgebra $\gzero$ of $\g$ is abelian, so by \cref{trivialtwistingcochain} we can construct the twisting cochain $t$ to be trivial in cohomological degrees greater than $2$. Then $d_t: X(\g) \rightarrow X(\g)$ satisfies
\begin{align*}
d_t \Big( \subgrp{x^a} \gamma_b(y) \gamma_c(x) \Big) &= d \Big(\subgrp{x^a} \gamma_b(y) \Big) \gamma_c(x) \\
&\relphantom{=} {}+ (-1)^{a+b}\left[(\subgrp{x^a} \gamma_b(y)) \cdot (x^{p-1}\subgrp{x} - \subgrp{x}) \right] \gamma_{c-1}(x) \\
&= x \subgrp{x^{a-1}} \gamma_b(y) \gamma_b(x) \\
&\relphantom{=} {}+ (-1)^a y \subgrp{x^a}\gamma_{b-1}(y)\gamma_c(x) \\
&\relphantom{=} {}+ b \cdot \subgrp{x^{a-1}}\gamma_b(y)\gamma_c(x) \\
&\textstyle \relphantom{=} {}+ (-1)^a\sum_{i=0}^{p-1} \binom{p-1}{i} b^i \cdot x^{p-1-i} \subgrp{x^{a+1}}\gamma_b(y)\gamma_{c-1}(x) \\
&\relphantom{=} {}- (-1)^a \subgrp{x^{a+1}}\gamma_b(y) \gamma_{c-1}(x),
\end{align*}
where $\subgrp{x^{a-1}}$ is interpreted to be zero if $a-1$ is negative. Note that by Fermat's Little Theorem, if $b \not\equiv 0 \mod p$, then $b^{p-1} = 1$ in $k$.

Now let $\{ x^*, y^* \}$ be the basis for $\g^*$ that is dual to $\set{x,y}$. Then a typical monomial in $\Lambda_s(\g^*) \otimes S(\gzero[2]^*)^{(1)}$ can be written in the form $\subgrp{(x^*)^a}(y^*)^b(x^*)^c$ for some $a,b,c \in \N$ with $a \leq 1$. Making the identification \eqref{eq:HomXg}, the differential $d_t^*$ on $\Hom_{V(\g)}(X(\g),k)$ then takes the form
\begin{align*}
(y^*)^b (x^*)^c &\mapsto b \cdot (-1)^b \subgrp{(x^*)}(y^*)^b(x^*)^c, \\
\subgrp{(x^*)}(y^*)^b(x^*)^c &\mapsto (-1)^b (y^*)^b(x^*)^{c+1}, & \text{if $b \equiv 0 \mod p$, and}\\
\subgrp{(x^*)}(y^*)^b (x^*)^c &\mapsto 0 & \text{if $b \not\equiv 0 \mod p$.}
\end{align*}
Then $\Hbul(V(\g),k)$ identifies with the subspace of $\Lambda_s(\g^*) \otimes S(\gzero^*[2])^{(1)}$ spanned by all monomials of the form $(y^*)^{pb}$ for $b \in \N$. In fact, from the comments immediately preceding \cref{smallestrestrictedexample}, we can conclude that $\Hbul(V(\g),k) \cong k[(y^*)^p]$, i.e., $\Hbul(V(\g),k) \cong S(\gone^*[p])^{(1)}$.

A shorter calculation of the algebra structure of $\Hbul(V(\g),k)$ goes as follows. First observe that $\gone$ is an ideal in $\g$, and $\g/\gone$ is isomorphic as a restricted Lie algebra to the toral Lie algebra $\gzero$. Then there exists a Lyndon--Hochschild--Serre spectral sequence
\[
E_2^{i,j} = \opH^i(V(\gzero),\opH^j(V(\gone),k)) \Rightarrow \opH^{i+j}(V(\g),k).
\]
The enveloping superalgebra $V(\gone)$ is isomorphic to the exterior algebra $\Lambda(\gone)$, so $\Hbul(V(\gone),k)$ is isomorphic to the symmetric algebra $S(\gone^*)$ with $\gone^*$ concentrated in cohomological degree $1$. Next, the enveloping algebra $V(\gzero)$ is semisimple, so $E_2^{i,j} = 0$ for all $i > 0$. Then the spectral sequence collapses at the $E_2$-page, yielding an isomorphism of algebras $\Hbul(V(\g),k) \cong S(\gone^*)^{\gzero} = k[y^*]^{\gzero}$. Now since $[x,y] = -y$, $x$ acts on the polynomial generator $y^*$ of $S(\gone^*)$ by $x.(y^*) = y^*$. Finally, since $x$ acts by derivations on $S(\gone^*)$, this implies that $S(\gone^*)^{\gzero}$ is the subalgebra of $S(\gone^*)$ generated by $(y^*)^p$, and hence that $\Hbul(V(\g),k) \cong S(\gone^*[p])^{(1)}$.
\end{example}

The previous example provides some a priori motivation for why the space $\glone^*$ in \eqref{eq:phi} is concentrated in $\Z$-degree $p$.

\begin{example} \label{mainexample}
Let $\g$ be a finite-dimensional restricted Lie superalgebra over $k$ generated by a nonzero odd element $y$ and a nonzero even element $x$ such that $[x,y] = 0$ and $\frac{1}{2}[y,y] = x^{[p]}$. For $i \geq 0$, set $x_i = x^{[p^i]}$. Since $\g$ is finite-dimensional, there exists $n \in \N$ and scalars $\alpha_0,\ldots,\alpha_n \in k$ such that $x_{n+1} = \sum_{i=0}^n \alpha_i x_i$. Assume that $n$ is minimal with this property. Then $\{y,x_0,\ldots,x_n\}$ is a basis for $\g$. With respect to this basis, a homogeneous monomial in $\bsa(\g) \otimes \Gamma(\gzero[2])^{(1)} \cong (\Lambda(\gzero) \gotimes \Gamma(\gone)) \otimes \Gamma(\gzero[2])^{(1)}$ can be written in the form $\subgrp{x_0^{a_0} \cdots x_n^{a_n}} \gamma_b(y) \gamma_{c_0}(x_0) \cdots \gamma_{c_n}(x_n)$ for some $a_i,b,c_i \in \N$ with $a_i \leq 1$ for each $i$. The even subalgebra of $\g$ is abelian, so by \cref{trivialtwistingcochain} we can construct the twisting cochain $t$ to be trivial in cohomological degrees greater than $2$. Then the differential $d_t: X(\g) \rightarrow X(\g)$ satisfies
\begin{align*}
d_t \big( \subgrp{x_0^{a_0} & \cdots x_n^{a_n}} \gamma_b(y) \gamma_{c_0}(x_0) \cdots \gamma_{c_n}(x_n) \big) \\
&= \textstyle \sum_{j=0}^n (-1)^{a_0 + \cdots + a_{j-1}+1} x_j \subgrp{x_0^{a_0} \cdots x_j^{a_j-1} \cdots x_n^{a_n}}\gamma_b(y) \gamma_{c_0}(x_0) \cdots \gamma_{c_n}(x_n) \\
&\relphantom{=} {}+ (-1)^{a_0 + \cdots + a_n} y \subgrp{x_0^{a_0} \cdots x_n^{a_n}} \gamma_{b-1}(y) \gamma_{c_0}(x_0) \cdots \gamma_{c_n}(x_n) \\
&\relphantom{=} {}- \subgrp{x_1x_0^{a_0} \cdots x_n^{a_n}} \gamma_{b-2}(y) \gamma_{c_0}(x_0) \cdots \gamma_{c_n}(x_n) \\
&\relphantom{=} {}+ (-1)^{a_0+\cdots+a_n} \left( \textstyle \sum_{j=0}^n x_j^{p-1}\subgrp{x_0^{a_0} \cdots x_n^{a_n} x_j} \gamma_b(y) \gamma_{c_0}(x_0) \cdots \gamma_{c_j-1}(x_j) \cdots \gamma_{c_n}(x_n) \right) \\
&\relphantom{=} {}- (-1)^{a_0+\cdots+a_n} \left( \textstyle \sum_{j=0}^{n-1} \subgrp{x_0^{a_0} \cdots x_n^{a_n} x_{j+1}} \gamma_b(y) \gamma_{c_0}(x_0) \cdots \gamma_{c_j-1}(x_j) \cdots \gamma_{c_n}(x_n) \right) \\
&\relphantom{=} {}- (-1)^{a_0+\cdots+a_n} \left( \textstyle \sum_{i=0}^n \alpha_i \subgrp{x_0^{a_0} \cdots x_n^{a_n} x_i} \gamma_b(y) \gamma_{c_0}(x_0) \cdots \gamma_{c_{n-1}}(x_{n-1}) \gamma_{c_n-1}(x_n) \right)
\end{align*}

Let $\set{y^*,x_0^*,x_1^*,\ldots,x_n^*}$ be the basis for $\g^*$ that is dual to $\set{y,x_0,x_1,\ldots,x_n}$. Then a homogeneous monomial in $\Lambda_s(\g^*) \otimes S(\gzero[2]^*)^{(1)} \cong (\Lambda(\gzero^*) \gotimes S(\gone^*)) \otimes S(\gzero[2]^*)^{(1)}$ can be written in the form \eqref{eq:dualmonomials}
\begin{equation} \label{eq:dualmonomials}
\subgrp{(x_0^*)^{a_0} \cdots (x_n^*)^{a_n}}(y^*)^b (x_0^*)^{c_0} \cdots (x_n^*)^{c_n},
\end{equation}
for some $a_i,b,c_i \in \N$ with $a_i \leq 1$ for each $i$. Now making the identification of graded superspaces \eqref{eq:HomXg}, the differential $d_t^*$ on $\Hom_{V(\g)}(X(\g),k)$ satisfies
\begin{align*}
(y^*)^b (x_0^*)^{c_0} \cdots (x_n^*)^{c_n} &\mapsto 0,
\\
\subgrp{x_0^*}(y^*)^b (x_0^*)^{c_0} \cdots (x_n^*)^{c_n}
&\mapsto
(-1)^b \alpha_0 \cdot (y^*)^b(x_0^*)^{c_0} \cdots (x_{n-1}^*)^{c_{n-1}}(x_n^*)^{c_n+1},
\\
\subgrp{x_1^*}(y^*)^b (x_0^*)^{c_0} \cdots (x_n^*)^{c_n}
&\mapsto
(-1)^b(y^*)^b(x_0^*)^{c_0+1} (x_1^*)^{c_1} \cdots (x_n^*)^{c_n}
- (-1)^b(y^*)^{b+2}(x_0^*)^{c_0} \cdots (x_n^*)^{c_n}
\\
&\relphantom{\mapsto}
{}+(-1)^b \alpha_1 \cdot (y^*)^b(x_0^*)^{c_0} \cdots (x_n^*)^{c_n+1}, \quad \text{and}
\\
\subgrp{x_i^*}(y^*)^b (x_0^*)^{c_0} \cdots (x_n^*)^{c_n}
&\mapsto
(-1)^b (y^*)^b (x_0^*)^{c_0} \cdots (x_{i-1}^*)^{c_{i-1}+1} \cdots (x_n^*)^{c_n}
\\
&\relphantom{\mapsto} {}+ (-1)^b \alpha_i \cdot (y^*)^b (x_0^*)^{c_0} \cdots (x_{n-1}^*)^{c_{n-1}} (x_n^*)^{c_n+1} \quad \text{if $2 \leq i \leq n$.}
\end{align*}
In particular, the following polynomials are coboundaries:
\begin{align*}
&\alpha_0 \cdot x_n^*, \\
&x_0^* + \alpha_1 \cdot x_n^* - (y^*)^2, \qquad \text{and} \\
&x_i^* + \alpha_{i+1} \cdot x_n^* \qquad \text{for $1 \leq i < n$.}
\end{align*}
More generally, $d_t^*$ maps any monomial of the form \eqref{eq:dualmonomials} in which $\sum_{i=0}^n a_i = j$ to a linear combination of monomials of the form \eqref{eq:dualmonomials} in which $\sum_{i=0}^n a_i = j-1$.
\end{example}

\subsection{The cohomological spectrum} \label{subsec:spectrum}

To begin this section, let $r \geq 1$ and let $GL(m|n)_r$ be the $r$-th Frobenius kernel of the general linear supergroup scheme $GL(m|n)$. As discussed in Section \ref{subsec:CFG}, the extension classes \eqref{eq:Extgenerators} give rise to a homomorphism, which we now write in the form
\begin{equation} \label{eq:simplifiedphir}
\phi_r: S\left( (\glzero^*)^{\oplus r} \oplus \glone^* \right)^{(r)} \rightarrow \Hbul(GL(m|n)_r,k).
\end{equation}
Since $\Hbul(GL(m|n)_r,k)$ is finite over $\phi_r$, the induced morphism between maximal ideal spectra
\begin{equation} \label{eq:Phir}
\Phi_r: \abs{GL(m|n)_r} \rightarrow (\glzero)^{\times r} \times \glone.
\end{equation}
is a finite morphism of affine varieties.

\begin{definition}
Let $G$ be an affine $k$-supergroup scheme and let $\g = \Lie(G)$ be the restricted Lie superalgebra of $G$, with the $p$-map on $\gzero$ denoted by $x \mapsto x^{[p]}$. Given an integer $r \geq 1$, define the commuting variety $C_r(G)$ by
\begin{align*}
C_r(G) = \Big\{ (\alpha_0,\alpha_1,\ldots,\alpha_{r-1},\beta) \in (\gzero)^{\times r} \times \gone &: [\alpha_i,\alpha_j] = 0, [\alpha_i,\beta] = 0 \text{ for all $i,j$,} \\
&\alpha_i^{[p]} = 0 \text{ for $0 \leq i \leq r -2$, and } \alpha_{r-1}^{[p]} = \tfrac{1}{2}[\beta,\beta] \Big\}.
\end{align*}
\end{definition}

Recall that the underlying purely even subgroup scheme of $GL(m|n)$ is $GL_m \times GL_n$. Then the $r$-th Frobenius kernel $(GL_m \times GL_n)_r$ of $GL_m \times GL_n$ is naturally a subgroup scheme of $GL(m|n)_r$, and the inclusion $GL_m \times GL_n \hookrightarrow GL(m|n)$ induces a corresponding morphism
\[
\abs{(GL_m \times GL_n)_r} \rightarrow \abs{GL(m|n)_r}
\]
between the cohomology varieties. We now get the following analogue of \cite[Proposition 5.1]{Suslin:1997}:

\begin{proposition} \label{imageinCrG}
The morphism \eqref{eq:Phir} contains in its image the commuting variety $C_r(GL_m \times GL_n)$. If each $\lambda_i$ in \cref{Extalgebrarelations} is equal to $1$, then the image of \eqref{eq:Phir} is contained in the commuting variety $C_r(GL(m|n))$.
\end{proposition}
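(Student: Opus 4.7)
For the first claim, I would use naturality via the closed embedding $\iota: GL_m \times GL_n \hookrightarrow GL(m|n)$ of the underlying even subgroup. Restriction to $r$-th Frobenius kernels induces a continuous map $\iota^*: |(GL_m \times GL_n)_r| \to |GL(m|n)_r|$. Since $\bsi_0^{(r)}(k^{m|n}) = k^{m|0\,(r)}$ and $\bsi_1^{(r)}(k^{m|n}) = k^{0|n\,(r)}$ are supported on disjoint blocks of $GL_m \times GL_n$, the off-diagonal classes $\bsc_r|_{(GL_m \times GL_n)_r}$ and $\bsc_r^\Pi|_{(GL_m \times GL_n)_r}$ both vanish, while $\bse_i'$ and $\bse_i''$ restrict to the classical Friedlander--Suslin extension classes for $GL_{m,r}$ and $GL_{n,r}$, respectively. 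Consequently $\Phi_r \circ \iota^*$ factors through $(\glzero)^{\times r} \times \{0\}$ and coincides with the classical morphism of \cite{Suslin:1997a}, whose image is $C_r(GL_m) \times C_r(GL_n) = C_r(GL_m \times GL_n)$. This realises the inclusion $\im(\Phi_r) \supseteq C_r(GL_m \times GL_n)$.

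For the second claim, the strategy is to lift each relation in \cref{Extalgebrarelations} from $\Ext_{\bsp}^\bullet(\bsir,\bsir)$ down to $\Ext_G^\bullet(k^{m|n(r)}, k^{m|n(r)})$ and then evaluate at an arbitrary character of $\Hbul(G,k)$. Because $G = GL(m|n)_r$ acts trivially on Frobenius twists of height $\leq r$, there is a canonical identification
\[
\Ext_G^\bullet(k^{m|n(r)}, k^{m|n(r)}) \;\cong\; \End_k(k^{m|n})^{(r)} \otimes \Hbul(G,k),
\]
under which Yoneda composition corresponds to matrix multiplication on the first tensor factor combined with cup product on the second (with the standard super-signs). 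Writing each of $\bse_i'|_G, \bse_i''|_G, \bsc_r|_G, \bsc_r^\Pi|_G$ as a sum of matrix units, sitting in the appropriate block of $\End_k(k^{m|n})^{(r)}$, tensored with cohomology classes, the homomorphism $\phi_r$ becomes the multiplicative extension of the assignment sending each polynomial generator to its corresponding cohomology class. A point of $\Max(\Hbul(G,k))$ determines a character $\chi$, and the extended ring map $\chi \otimes \mathrm{id}$ sends $\bse_i'|_G + \bse_i''|_G$ to a matrix $\alpha_i \in \glzero$ and $\bsc_r|_G + \bsc_r^\Pi|_G$ to a matrix $\beta \in \glone$.

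Applying $\chi \otimes \mathrm{id}$ to each relation of \cref{Extalgebrarelations} then produces the defining equations of $C_r(GL(m|n))$: relation (4) gives blockwise commutation, hence $[\alpha_i,\alpha_j] = 0$; relation (2) gives $(\alpha_i)_m^p = (\alpha_i)_n^p = 0$ for $i<r$, hence $\alpha_i^{[p]} = 0$ (the $p$-map on $\gzero$ being the ordinary matrix $p$-th power); relation (1) gives $(\alpha_{r-1})_m^p = \beta_+ \beta_-$ and $(\alpha_{r-1})_n^p = \beta_- \beta_+$, which together yield $\alpha_{r-1}^{[p]} = \beta^2 = \tfrac{1}{2}[\beta,\beta]$; finally, relations (3) and (5) give $(\alpha_i)_m \beta_\pm = \lambda_i \beta_\pm (\alpha_i)_n$ in the respective off-diagonal blocks, which rearrange to $[\alpha_i,\beta] = 0$ precisely when each $\lambda_i = 1$.

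The principal technical obstacle will be careful sign bookkeeping for the super-Yoneda product, and in particular verifying that the $p$-fold Yoneda composition $(\sum e_{ab} \otimes h_{ab})^p$ evaluated at $\chi$ recovers the matrix $p$-th power $(\sum \chi(h_{ab}) e_{ab})^p$; one must check that the non-commutativity of the matrix units introduces no extra combinatorial factors. Once this is in place, relation (1) translates directly into the ``mixed'' restricted-nullcone condition $\alpha_{r-1}^{[p]} = \tfrac{1}{2}[\beta,\beta]$ that distinguishes the super commuting variety from its classical analogue, and the hypothesis $\lambda_i = 1$ is used precisely to guarantee that the odd-even bracket produces a super-commutator rather than an anti-commutator.
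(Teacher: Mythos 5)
Your proposal is correct and follows essentially the same two-step strategy as the paper: restrict along the even subgroup $(GL_m \times GL_n)_r \hookrightarrow GL(m|n)_r$ and invoke Suslin--Friedlander--Bendel's classical result to get the lower bound, then interpret the generating extension classes as matrices and feed the relations of \cref{Extalgebrarelations} into them to get the upper bound; your $\End_k(k^{m|n})^{(r)} \otimes \Hbul(G,k)$ decomposition is the same thing as the paper's interpretation of the classes as elements of $M(m|n)(R)$ with $R = \opH^\ev(G,k)_{\zero} \oplus \opH^\odd(G,k)_{\one}$, and working over that genuinely commutative subring $R$ is exactly what takes care of the sign bookkeeping you flag at the end. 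Two small points worth tightening: first, the stated reason that $\bsc_r$ and $\bsc_r^\Pi$ restrict to zero on $(GL_m \times GL_n)_r$ --- ``supported on disjoint blocks'' --- is not quite right as an argument, since $\Ext_{(GL_m \times GL_n)_r}^\bullet(k^{0|n(r)},k^{m|0(r)})$ is nonzero in general; the clean reason (which the paper achieves by restricting $\bsp$-classes along $\bsv_{\zero}$ and $\bsv_{\one}$) is that $\bsc_r|_G$ is an \emph{even} linear map $\g_{+1}^{*(r)} \to \opH^{p^r}((GL_m\times GL_n)_r,k)$, so its image lies in the odd part of the cohomology of a purely even group scheme, which is zero. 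Second, there is an index slip: relation (2) of \cref{Extalgebrarelations} (which concerns $\bse_i$ for $1 \le i < r$) translates to $\alpha_i^{[p]}=0$ only for $0 \le i \le r-2$, not for all $i < r$; the case $i=r-1$ is governed by relation (1), as you correctly state in the next clause.
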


\begin{proof}
Set $G = GL(m|n)_r$ and set $A = \Hbul(G,k)$. Write $M(m|n)$ for the affine $k$-superscheme such that for each commutative superalgebra $R$, $M(m|n)(R)$ is equal to the set of all block matrices of the form
\[
g = \left( \begin{array}{c|c} W & X \\ \hline Y & Z \end{array} \right)
\]
with $W$ an $m \times m$ matrix with entries in $R_{\zero}$, $X$ an $m \times n$ matrix with entries in $R_{\one}$, $Y$ an $n \times m$ matrix with entries in $R_{\one}$, and $Z$ an $n \times n$ matrix with entries in $R_{\zero}$. (So $GL(m|n)(R)$ is the principal open subset of $M(m|n)(R)$ defined by the function $\det: g \mapsto \det(W) \cdot \det(Z)$.) Then as in \cite[Remark 3.3]{Suslin:1997}, we can interpret the cohomology classes $\bse_i'|_G$ and $\bse_i''|_G$ for $1 \leq i \leq r$ and $\bsc_r|_G$ and $\bsc_r^\Pi|_G$ as elements of $M(m|n)(R)$, where $R$ is the supercommutative subalgebra
\[
\opH^{\ev}(G,k)_{\zero} \oplus \opH^{\odd}(G,k)_{\one}
\]
of $A$. Specifically, by \eqref{eq:linearmaps} each cohomology class naturally defines a linear map into $\Hbul(G,k)$. Each of these linear maps naturally extends to a linear map $\glmn^{*(r)} \rightarrow \Hbul(G,k)$. For example, the linear map $\g_m^{*(r)} \rightarrow \Hbul(G,k)$ corresponding to $\bse_i'|_G$ extends to $\glmn^{*(r)}$ by acting trivially on the summands $\g_n^{*(r)}$, $\g_{+1}^{*(r)}$, and $\g_{-1}^{*(r)}$ of $\glmn^{*(r)}$. Then the $(i,j)$-entry of the matrix corresponding to a particular cohomology class is equal to the image of the coordinate function $X_{ij}^* \in \glmn^*$ under the corresponding linear map $\glmn^{*(r)} \rightarrow \Hbul(G,k)$.

Interpreting the cohomology classes as elements of $M(m|n)(R)$, the Yoneda product of classes corresponds to matrix multiplication in $M(m|n)(R)$. Then assuming that the structure constants $\lambda_i$ in \cref{Extalgebrarelations} are all equal to $1$, we deduce from \cref{Extalgebrarelations} using reasoning exactly parallel to that in the proof of \cite[Proposition 5.1]{Suslin:1997} that the kernel of \eqref{eq:simplifiedphir} contains a set of generators for the ideal in $S( (\glzero^*)^{\oplus r} \oplus \glone^* )^{(r)}$ defining $C_r(G)$. (Note that if $\beta \in \glone$, then $\frac{1}{2}[\beta,\beta] = \frac{1}{2}(\beta \beta + \beta \beta) = \beta^2$.) This proves the second claim of the proposition.

For the first claim of the proposition, recall from the proof of \cite[Theorem 4.7.1]{Drupieski:2016} that the extension classes $\bse_1',\bse_2',\ldots,\bse_r'$ restrict to scalar multiples of the universal extension classes $e_1^{(r-1)}$, $e_2^{(r-2)}$, \dots, $e_r$ constructed by Friedlander and Suslin \cite{Friedlander:1997}. Somewhat more precisely, restriction to the category $\bsv_{\zero}$ of purely even superspaces defines a functor from the category $\bsp$ of strict polynomial superfunctors to the category $\cp$ of ordinary strict polynomial functors (cf.\ the discussion at the end of \cite[\S2.1]{Drupieski:2016}). This functor then induces a map on extension groups that sends $\bse_i'$ to a scalar multiple of $e_i^{(r-i)}$. Similarly, restriction to the category $\bsv_{\one}$ of purely odd superspaces defines a functor from $\bsp$ to $\cp$ that sends each $\bse_i''$ to a scalar multiple of $e_i^{(r-i)}$. Combining these observations, it follows that the homomorphism \eqref{eq:simplifiedphir} fits into a commutative diagram
\begin{equation} \label{eq:restrictiondiagram}
\vcenter{\xymatrix{
S\left( (\glzero^*)^{\oplus r} \oplus \glone^* \right)^{(r)} \ar@{->}[r]^{\phi_r} \ar@{->}[dd] & \Hbul(GL(m|n)_r,k) \ar@{->}[d] \\
& \Hbul((GL_m \times GL_n)_r,k) \ar@{->}[d] \\
S( (\gl_m^*)^{\oplus r} )^{(r)} \otimes S( (\gl_n^*)^{\oplus r} )^{(r)} \ar@{->}[r] & \Hbul((GL_m)_r,k) \otimes \Hbul((GL_n)_r,k).
}}
\end{equation}
The left-hand vertical arrow in \eqref{eq:restrictiondiagram} is induced by restriction to $\glzero^{\oplus r}$ and by the identification $\glzero = \gl_m \oplus \gl_n$, the right-hand vertical arrows are induced by the inclusion
\[
(GL_m)_r \times (GL_n)_r = (GL_m \times GL_n)_r \hookrightarrow GL(m|n)_r
\]
and by the K\"{u}nneth formula, and the bottom horizontal arrow is the homomorphism induced by Friedlander and Suslin's universal extension classes for $GL_m$ and $GL_n$. Now the first claim of the proposition follows from the commutativity of the diagram and from \cite[\S5]{Suslin:1997a}.
\end{proof}

For $r = 1$ we obtain a sharper result. Set $\Phi = \Phi_1$.

\begin{theorem} \label{T:imageofPhi}
Let $m,n \geq 1$. The image of the morphism $\Phi: \abs{GL(m|n)_1} \rightarrow \glmn$ is precisely
\[
C_1(GL(m|n)) = \set{ (\alpha,\beta) \in \glzero \times \glone : [\alpha,\beta] = 0 \text{ and } \alpha^{[p]} = \tfrac{1}{2}[\beta,\beta]}.
\]
\end{theorem}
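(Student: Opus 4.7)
The inclusion $\Phi(\abs{GL(m|n)_1}) \subseteq C_1(GL(m|n))$ is immediate from \cref{imageinCrG}: the last line of the proof of \cref{Extalgebrarelations} establishes $\lambda_r = 1$, so in particular $\lambda_1 = 1$ and the hypothesis of the second assertion of \cref{imageinCrG} is met.

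For the reverse inclusion, fix $(\alpha,\beta) \in C_1(GL(m|n))$ and let $\g \subseteq \glmn$ be the restricted Lie sub-superalgebra generated by $\alpha$ and $\beta$, with corresponding sub-supergroup scheme $G \subseteq GL(m|n)_1$ satisfying $kG = V(\g)$. Naturality of the constructions of \cref{subsec:CFG} together with \cref{phifactorsthroughg} yields a commutative diagram exhibiting $\Phi_G(\abs{G}) \subseteq \Phi(\abs{GL(m|n)_1})$ with $\Phi_G$ valued in the closed subvariety $\g \hookrightarrow \glmn$, so it suffices to show $(\alpha,\beta) \in \Phi_G(\abs{G})$. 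The degenerate cases $\beta = 0$ and $\alpha = 0$ are handled respectively by the first half of \cref{imageinCrG} and by the direct computation of \cref{abelianodd}. In the principal case $\alpha,\beta \neq 0$, the defining relations of $\g$ force it to realize exactly the restricted Lie superalgebra of \cref{mainexample}, with $x = \alpha$, $y = \beta$, and $x_i = \alpha^{[p^i]}$.

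The plan for this case is to read off from the explicit coboundary calculation in \cref{mainexample} the cohomological relations
\[
\alpha_0 \, [x_n^*] = 0, \qquad [x_0^*] = [(y^*)^2] - \alpha_1 \, [x_n^*], \qquad [x_i^*] = -\alpha_{i+1} \, [x_n^*] \ \text{ for } 1 \le i < n,
\]
then raise the latter two to the $p$-th power and invoke \cref{oddfactorization} to identify $[(y^*)^{2p}]$ with the image under $\phi_G$ of the square of the odd generator of $S(\gone^*[p])^{(1)}$. After passing to radicals, the resulting relations generate precisely the ideal defining $C_1(\g)$ inside $\gzero \times \gone$; combined with the inclusion $\Phi_G(\abs{G}) \subseteq C_1(\g)$ available from the preceding paragraph applied to $G$, this forces $\Phi_G(\abs{G}) = C_1(\g)$, and in particular $(\alpha,\beta) \in \Phi_G(\abs{G})$.

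The main obstacle will be this last ideal-matching step: one must carefully track the Frobenius twist (so that a generator $u_i \in S(\gzero^*[2])^{(1)}$ pairs correctly with the coordinate $a_i$ via $u_i \leftrightarrow a_i^p$), verify the parity signs governing the super-commutative product of $(y^*)^p$ with itself inside $\Hbul(V(\g),k)$, and rule out spurious cocycle relations in $\ker(\phi_G)$ lying outside the ideal of $C_1(\g)$. Should direct ideal comparison prove unwieldy, the argument can instead be completed via a dimension count, using that $C_1(\g)$ is irreducible of the expected dimension and that $\Phi_G$ is a finite morphism whose image dimension is computable from the structure of $\Hbul(V(\g),k)$ provided by \cref{mainexample}.
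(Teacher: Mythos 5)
Your proof follows essentially the same route as the paper's: both get $\im(\Phi) \subseteq C_1(GL(m|n))$ from \cref{imageinCrG}, then pass to the restricted sub-Lie superalgebra $\g \subseteq \glmn$ generated by $\alpha$ and $\beta$, treat the cases $\beta = 0$ and $\alpha = 0$ separately, and in the principal case identify $\g$ with the superalgebra of \cref{mainexample} and extract the required cohomology classes via \ref{ertoG}, \cref{Xedgemap}, and \cref{oddfactorization}. The concerns you flag in the final ideal-matching step (Frobenius twists, parity signs on $(y^*)^p$, spurious kernel elements) all turn out to be benign, and the dimension-count fallback is also available since $\Phi_G$ is finite with image a closed subset of the irreducible variety $C_1(\g)$; the paper simply leaves this last bookkeeping implicit.
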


\begin{proof}
We have $\im(\Phi) \subseteq C_1(GL(m|n))$ by \cref{imageinCrG}, so let $(\alpha,\beta) \in C_1(GL(m|n))$, and let $\g$ be the restricted subalgebra of $\glmn$ generated by $\alpha$ and $\beta$, i.e., the subalgebra of $\glmn$ generated by $\beta$ and $\alpha^{[p^i]}$ for $i \geq 0$. Let $G$ be the finite supergroup scheme with $kG = V(\g)$. Then $G$ is naturally a sub-supergroup scheme of $GL(m|n)_1$. Now let
\[
\phi_\g: S(\glmn^*)^{(1)} \rightarrow \Hbul(G,k) = \Hbul(V(\g),k)
\]
be the homomorphism obtained by taking $r=1$ in \eqref{eq:phir}, and let $\Phi_\g: \abs{V(\g)} \rightarrow \glmn$ be the corresponding morphism between maximal ideal spectra. By definition, $\phi_\g$ factors through the cohomology ring $\Hbul(GL(m|n)_1,k)$ and $\Phi_\g$ factors through $\abs{GL(m|n)_1}$. We will show that $(\alpha,\beta) \in \im(\Phi)$ by showing that $(\alpha,\beta) \in \im(\Phi_\g)$.

If $\beta = 0$, then $(\alpha,\beta) = (\alpha,0) \in \im(\Phi)$ by \cref{imageinCrG}, so assume that $\beta \neq 0$. If $\alpha = 0$, then $\g$ is the one-dimensional purely odd abelian Lie superalgebra generated by $\beta$, so $V(\g) = U(\g)$ and hence $\Hbul(V(\g),k) = \Hbul(\g,k) \cong S(\g^*)$ by \cref{abelianodd}. Moreover, in this case the May spectral sequence for $V(\g)$ collapses to the column $i = 0$ of the $E_0$-page. Then it follows from the $r=1$ cases of the factorizations \ref{ertoG} and \ref{crtoG} that $(0,\beta) \in \im(\Phi_\g)$. So now assume that $\alpha$ and $\beta$ are both nonzero. Then $\g$ is of the type considered in \cref{mainexample}. Using the factorizations in \ref{ertoG}, \cref{Xedgemap}, and \cref{oddfactorization}, it follows from the explicit calculations in \cref{mainexample} that $(\alpha,\beta) \in \im(\Phi_\g)$.
\end{proof}

More generally, let $G$ be a sub-supergroup scheme of $GL(m|n)_1$, and let $\g = \Lie(G)$, considered as a sub-Lie superalgebra of $\glmn$.  By \cref{phifactorsthroughg}, the homomorphism
\[
\phi_G: S(\glmn^*)^{(1)} \rightarrow \Hbul(G,k)
\]
factors through $S(\g^*)$, so the corresponding morphism of affine varieties
\[
\Phi_G: \abs{G} \rightarrow \glmn
\]
has image in the subspace $\g$ of $\glmn$. Then arguing as in the proof of \cref{T:imageofPhi}, one gets:

\begin{proposition}
$\im(\Phi_G) = C_1(G)$.
\end{proposition}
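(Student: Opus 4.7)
The plan is to adapt the two-step approach used in the proof of \cref{T:imageofPhi}, splitting the equality into the two opposite inclusions and handling the nontrivial one by reducing to a computation on the small restricted sub-Lie superalgebra generated by the chosen element of $C_1(G)$.

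For the easy inclusion $\im(\Phi_G) \subseteq C_1(G)$, I would exploit naturality. By construction, $\phi_G$ is obtained from the restrictions to $G$ of the distinguished extension classes $\bse_1', \bse_1'', \bsc_1, \bsc_1^\Pi$, and these restrictions factor through the analogous classes for $GL(m|n)_1$. Thus $\phi_G$ fits into a commutative diagram with the restriction homomorphism $\Hbul(GL(m|n)_1, k) \to \Hbul(G,k)$, and $\Phi_G$ factors as $\abs{G} \to \abs{GL(m|n)_1} \xrightarrow{\Phi} \glmn$. By \cref{T:imageofPhi} the image of the composite lies in $C_1(GL(m|n))$, and by the discussion preceding the proposition it also lies in $\g$. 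Since the bracket and $p$-map on $\g$ are the restrictions of those on $\glmn$, one has $C_1(GL(m|n)) \cap (\gzero \times \gone) = C_1(G)$, which yields the inclusion.

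For the reverse inclusion $C_1(G) \subseteq \im(\Phi_G)$, let $(\alpha,\beta) \in C_1(G)$ and let $\fs$ be the restricted Lie sub-superalgebra of $\g$ generated by $\alpha$ and $\beta$. Let $H$ be the sub-supergroup scheme of $G$ with $kH = V(\fs)$; since $\fs \subseteq \g \subseteq \glmn$, $H$ is likewise a sub-supergroup scheme of $GL(m|n)_1$, so \cref{phifactorsthroughg} produces a homomorphism $\phi_\fs: S(\glmn^*)^{(1)} \to \Hbul(V(\fs),k)$ with corresponding morphism $\Phi_\fs: \abs{V(\fs)} \to \glmn$ having image in $\fs$. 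By naturality as in the previous paragraph, $\Phi_\fs$ factors as $\abs{V(\fs)} \to \abs{G} \xrightarrow{\Phi_G} \glmn$, so $\im(\Phi_\fs) \subseteq \im(\Phi_G)$. It therefore suffices to verify $(\alpha,\beta) \in \im(\Phi_\fs)$.

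This last point is not new work: the subalgebra $\fs$ falls into exactly the three cases already analyzed in the last paragraph of the proof of \cref{T:imageofPhi}. If $\beta = 0$, then $(\alpha,0) \in \im(\Phi_\fs)$ by \cref{imageinCrG} applied to $H$. If $\alpha = 0$ and $\beta \neq 0$, then $V(\fs)$ is a one-variable exterior algebra, $\Hbul(V(\fs),k) \cong S(\fs^*)$ by \cref{abelianodd}, and the factorization in \cref{oddfactorization} (together with the $r=1$ case of \textup{(4.1.3)}) shows that $\phi_\fs$ is surjective, so $(0,\beta) \in \im(\Phi_\fs)$. If both $\alpha$ and $\beta$ are nonzero, then $\fs$ has the structure considered in \cref{mainexample}, and the explicit cochain calculations there, combined once more with \cref{oddfactorization} and \cref{Xedgemap}, exhibit $(\alpha,\beta)$ in $\im(\Phi_\fs)$. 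The only mild subtlety will be keeping the naturality diagrams straight when passing between $H$, $G$, and $GL(m|n)_1$; there is no genuinely new computation to perform.
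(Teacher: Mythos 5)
Your proposal is correct and matches the paper's intent exactly: the paper's own "proof" is the single phrase "arguing as in the proof of Theorem 4.4.7," and what you have written out — the containment $\im(\Phi_G) \subseteq C_1(G)$ via factoring through $\Phi$ and $\g$, and the reverse containment by reducing to the small restricted subalgebra $\fs=\subgrp{\alpha,\beta}$ and running the three cases of that proof — is the intended argument. One small caveat: in the $\beta=0$ case you write "$(\alpha,0) \in \im(\Phi_\fs)$ by \cref{imageinCrG} applied to $H$," but \cref{imageinCrG} as stated concerns only $GL(m|n)_r$; what you actually need is the classical Friedlander--Parshall/Suslin--Friedlander--Bendel computation for the purely even height-one group scheme $H$ (this is of course exactly what underlies the first half of \cref{imageinCrG}, so the idea is right, but the citation should be to the classical result rather than to the proposition itself).
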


The morphisms $\phi_G$ and $\Phi_G$ depend intrinsically on the particular embedding of $G$ into $GL(m|n)_1$, and a different choice of embedding could result in a different finite morphism. It would be desirable to have a description of $\abs{G}$ that is independent of the choice of embedding $G \hookrightarrow GL(m|n)_1$.

\makeatletter
\renewcommand*{\@biblabel}[1]{\hfill#1.}
\makeatother

\bibliographystyle{eprintamsplain}
\bibliography{on_support_varieties}

\end{document}